\documentclass[11pt,reqno]{article}   %style-phys/cs 
\usepackage{fullpage}

\usepackage{microtype}
\usepackage[unicode=true]{hyperref}
\usepackage[british]{babel}
\usepackage{amsmath}
\usepackage{cite}
\usepackage{amsfonts}
\usepackage{amssymb}
\usepackage{amsthm}
\usepackage{cleveref}  
\usepackage{authblk}
\usepackage{mypack} %custom commands
\usepackage[pdftex]{color,graphicx}
\usepackage{adjustbox}
\usepackage{soul}
\usepackage{comment}
\usepackage{bbold}
\usepackage{tikz} %for diagrams
\usetikzlibrary{decorations.pathreplacing,angles,quotes}
\usetikzlibrary{matrix,arrows,decorations.pathmorphing}
\usepackage{tikz-cd}
\usetikzlibrary{arrows}
\usetikzlibrary{decorations.markings}

\usepackage{bbold}
\usepackage{diagbox}
\usepackage{caption}
\usepackage{subcaption}
\usepackage{pdfpages} 
\usepackage{blkarray}
\usepackage{centernot}
\usepackage{mathtools}
\usepackage{stmaryrd}
\usepackage{soul}  %strikethrough text: \st{}
\usepackage{enumitem}

\definecolor{bluegray}{rgb}{0.4, 0.6, 0.8}
\definecolor{turquoise}{rgb}{0.2, 0.7, 0.6}
\usepackage{todonotes}
\newcounter{commcount}

\newcommand{\on}[1]{\operatorname{#1}}

\usepackage{todonotes}
\usepackage{xcolor}
\title{Vertex structure of fiber products of probability polytopes}

\author{Aziz Kharoof\thanks{aziz.kharoof@bilkent.edu.tr} }
\author{Cihan Okay\thanks{cihan.okay@bilkent.edu.tr}} 
\affil{{\small{Department of Mathematics, Bilkent University, Ankara, Turkey}}}

\date{\today}

\begin{document}

\maketitle

\begin{abstract}
We develop tools for characterizing vertices of fiber products of polytopes and apply them to simplicial distribution polytopes, a class of probability polytopes arising in quantum foundations and quantum information. In the theory of simplicial distributions, a pair of simplicial sets encoding measurement and outcome spaces determines a convex polytope of compatible probability assignments. Our first results give geometric criteria for detecting vertices of fiber products in terms of support data. These results are obtained in the more general framework of inverse limits of diagrams of polytopes in standard form, and they translate to corresponding criteria for simplicial distributions on arbitrary colimits of measurement spaces. We then focus on one-dimensional measurement spaces, where simplicial distributions recover and generalize local marginal polytopes in graphical models. In this setting, our sharpest results concern dipole graphs, for which we obtain a complete characterization of vertices and refine it to a graph-theoretic criterion. These characterizations are reminiscent of the classical support-graph criteria for transportation polytopes, but they arise in a richer class of polytopes in which vertex structure depends not only on support acyclicity but also on additional geometric compatibility data. Using the collapsing method from simplicial topology, we transfer the dipole characterization to rose graphs and obtain analogous results there. Finally, we apply collapsing to complete bipartite graphs, which encode physically relevant bipartite Bell scenarios, and more generally to arbitrary connected graphs, yielding lower bounds on the number of vertices.
\end{abstract}

\tableofcontents

\section{Introduction}

The vertex structure of convex polytopes is a central object of study in combinatorics and convex geometry. When a polytope is presented as the intersection of an affine subspace with the nonnegative orthant—that is, as a \emph{polytope in standard form}—its vertices are the extreme points of a linear feasibility region, and their characterization arises naturally in combinatorial optimization. Our motivation for studying such polytopes comes from probability polytopes that model measurement statistics in quantum mechanics, where they appear as non-signaling polytopes in quantum foundations \cite{pitowsky1989quantum,barrett2005nonlocal}. At the same time, this class includes important families such as transportation polytopes \cite{de2013combinatorics} and marginal polytopes in graphical models \cite{wainwright2008graphical}.

The principal class of examples considered in this paper is that of
\emph{simplicial distribution polytopes}~\cite{okay2022simplicial}. These are a recently introduced family of probability polytopes that generalize the non-signaling framework, traditionally modeled using presheaves of distributions~\cite{abramsky2011sheaf}, to distributions on measurement spaces modeled by simplicial sets, which are combinatorial models for spaces in modern homotopy theory~\cite{goerss2009simplicial}. In this setting, a measurement scenario is encoded by a simplicial set $X$ together with an outcome space $Y$, and the polytope $\sDist(X,Y)$ consists of probability assignments to the simplices of $X$, where each assignment is a distribution on the simplices of $Y$ and is compatible with the face and degeneracy maps of the simplicial structures. In the one-dimensional case, these polytopes recover the local marginal polytopes of graphical models~\cite{wainwright2008graphical}. To illustrate the generality of this family, every rational polytope can be represented by a local marginal polytope~\cite{prusa2013universality}.  
More general variants include twisted distributions whose support is modified by a cohomology class~\cite{okay2024twisted} and bundle distributions with varying outcome spaces~\cite{barbosa2023bundle}.
An important class of examples of the former arises in polyhedral classical simulation of quantum computation; see~\cite{okay2025polyhedral} for a recent survey.

A particularly structured source of polytopes in standard form is given by \emph{fiber products}. Given polytopes $L^{(1)},\dots,L^{(n)}$ and affine maps $f_i\colon L^{(i)}\to M$ to a common base polytope $M$, the fiber product
\[
L^{(1)}\times_M\cdots\times_M L^{(n)}
\]
is the subpolytope of the Cartesian product consisting of tuples $(x^{(1)},\dots,x^{(n)})$ satisfying the compatibility conditions
\[
f_1(x^{(1)})=\cdots=f_n(x^{(n)}).
\]
The vertex structure of such a polytope is governed by the interaction between the vertex structures of the factors $L^{(i)}$ and the geometry of the maps $f_i$, and in many examples of interest this interaction is reflected in the affine independence of certain images and the geometry of their convex hulls. This perspective arises naturally for simplicial distribution polytopes: when a measurement space $X$ is obtained as a colimit of simpler spaces, in particular by gluing spaces along a common subspace, the polytope $\sDist(X,Y)$ is naturally realized as a fiber product over the simplicial distribution polytope of the common subspace. The vertices of these polytopes represent the extremal probabilistic models of the measurement scenario and include both classical (deterministic) and genuinely nonclassical (contextual) distributions. 
Characterizing such contextual vertices is therefore a natural and important problem not only in quantum foundations \cite{barrett2005nonlocal,abramsky2016possibilities,kharoof2024extremal,kharoof2026geometry}, but also in quantum information, where contextuality has been identified as a computational resource \cite{raussendorf2009contextuality,
howard2014contextuality,bermejo2017contextuality,
budroni2022kochen}.

This paper develops tools to detect and characterize vertices of polytopes in standard form, with particular emphasis on fiber products and on the simplicial distribution polytopes for one-dimensional measurment spaces, where we obtain sharp combinatorial characterizations. Our main technical tool is a preorder relation $\preceq$ on a polytope $L$ in standard form, defined by
\[
y \preceq x \quad\Longleftrightarrow\quad \operatorname{supp}(y)\subseteq \operatorname{supp}(x),
\]
where $\operatorname{supp}(x)=\{i:\;x_i\neq 0\}$ denotes the support of $x$. For each $x\in L$, the set of vertices below $x$ in this preorder is called the \emph{vertex support} of $x$, and is denoted by $\Vsupp(x)$. In the fiber product
$
L^{(1)}\times_M\cdots\times_M L^{(n)},
$
our main sufficient condition (Theorem~\ref{thm:VertofGluingmain}) for a point $x=(x^{(1)},\dots,x^{(n)})$ to be a vertex is formulated in terms of the sets
\[
A_i=\{f_i(y):\;y\in \Vsupp(x^{(i)})\}.
\]  

\begin{thm*} 
Let $x=(x^{(1)},\dots,x^{(n)})$ be a point in the fiber product
$
L^{(1)} \times_M \cdots \times_M L^{(n)}$.
If, for every $1 \leq i \leq n$, the map $f_i$ is injective on $\Vsupp(x^{(i)})$ and the set $A_i$ is affinely independent, and if the intersection $\bigcap_{i=1}^n \Conv(A_i)$ of their convex hulls 
consists of a single point, then $x$ is a vertex.
\end{thm*}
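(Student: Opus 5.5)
We argue by contradiction, using the standard characterization of vertices of a polytope in standard form: a point $x$ of such a polytope $L$ is a vertex if and only if no nonzero direction $d$ with $\operatorname{supp}(d)\subseteq\operatorname{supp}(x)$ keeps $x\pm\epsilon d$ in $L$ for small $\epsilon>0$. Equivalently, $x$ is a vertex exactly when every decomposition $x=\tfrac12(y+z)$ with $y,z\in L$ is trivial. Note that any $y,z$ arising in such a decomposition satisfy $y,z\preceq x$. The fiber product $L^{(1)}\times_M\cdots\times_M L^{(n)}$ is itself in standard form, being cut out of the product by the linear equations $f_1(x^{(1)})=\cdots=f_n(x^{(n)})$. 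So suppose $x=\tfrac12(y+z)$ with $y=(y^{(i)})$ and $z=(z^{(i)})$ in the fiber product. We aim to show $y=z=x$.

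\textbf{Step 1 (restricting to the face).} For each $i$, the points $y^{(i)}$ and $z^{(i)}$ have support contained in $\operatorname{supp}(x^{(i)})$. Hence they lie in the face $F_i=\{w\in L^{(i)}:\ \operatorname{supp}(w)\subseteq \operatorname{supp}(x^{(i)})\}$. This face is a polytope whose vertices are exactly the vertices of $L^{(i)}$ in $\Vsupp(x^{(i)})$, so $F_i=\Conv(\Vsupp(x^{(i)}))$. Applying the affine map $f_i$ gives $f_i(F_i)=\Conv(A_i)$. Therefore $f_i(x^{(i)})$, $f_i(y^{(i)})$ and $f_i(z^{(i)})$ all lie in $\Conv(A_i)$.

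\textbf{Step 2 (using the single-point intersection).} Fiber compatibility says that $m_y:=f_i(y^{(i)})$ is the same for all $i$, and likewise $m_z:=f_i(z^{(i)})$ and $m_x:=f_i(x^{(i)})$. Hence $m_x$, $m_y$ and $m_z$ all lie in $\bigcap_i \Conv(A_i)$. Since this intersection is a single point, $m_x=m_y=m_z=:p$.

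\textbf{Step 3 (lifting uniquely back).} Here we use injectivity and affine independence. Because $A_i$ is affinely independent and $f_i$ is injective on $\Vsupp(x^{(i)})=\{v_1,\dots,v_k\}$, the point $p$ has a unique expression $p=\sum_j\lambda_j f_i(v_j)$ as a convex combination of the points of $A_i$. Now take any $w\in F_i$ with $f_i(w)=p$. Write $w=\sum_j\mu_j v_j$, which is possible because $F_i=\Conv(\Vsupp(x^{(i)}))$. By affineness, $f_i(w)=\sum_j\mu_j f_i(v_j)$, so uniqueness forces $\mu=\lambda$. Thus every such $w$ equals the single point $\sum_j\lambda_jv_j$, and $f_i$ restricted to $F_i$ is injective. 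Applying this to $x^{(i)}$, $y^{(i)}$ and $z^{(i)}$, which share the image $p$, gives $y^{(i)}=z^{(i)}=x^{(i)}$ for every $i$. Hence $x$ is a vertex.

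The main obstacle is Step 1, namely justifying that the face $F_i$ equals $\Conv(\Vsupp(x^{(i)}))$. A bounded face of a polytope is the convex hull of the vertices it contains, and a vertex $v$ of $L^{(i)}$ lies in $F_i$ exactly when $v\preceq x^{(i)}$. The only remaining point is that $F_i$ really is a face: it is the intersection of $L^{(i)}$ with the supporting hyperplanes $w_j=0$ for $j\notin\operatorname{supp}(x^{(i)})$. If the paper has already recorded this fact as a lemma about $\preceq$, we would cite it; otherwise we prove it in a line as above.
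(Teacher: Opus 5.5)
Your proof is correct and follows essentially the same route as the paper. The paper invokes Corollary~\ref{cor:thetheoremforfibre} to reduce to showing that any compatible tuple $\tilde{x}\in\prod_i\Conv(\Vsupp(x^{(i)}))$ equals $x$; then, exactly as in your Steps~2 and~3, it uses the single-point intersection to identify the images and uses affine independence plus injectivity to match the convex coefficients. The only difference is that you build in the needed direction of that corollary through the midpoint characterization of vertices, which uses only the easy implication $(2)\Rightarrow(1)$ of Lemma~\ref{lem:xleqythen}, and you re-derive $F_i=\Conv(\Vsupp(x^{(i)}))$ (Proposition~\ref{pro:yleqxVsupp}) from the face structure instead of citing these results.
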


We also establish partial converses: if $x$ is a vertex and
$\Vsupp(x^{(j)})$ is affinely independent for some $j$, then $A_j$ is
affinely independent (Proposition~\ref{pro:affinXimpliesaffinWmain}); and
if $x$ is a vertex, then $\bigcap_{i=1}^n\Conv(A_i)$ is a single point
with all affine coefficients nonzero
(Proposition~\ref{pro:VertofGluihalfsecondirmain}).
The theorem above comes from a more general result on inverse limits (Proposition \ref{pro:Mainresultpoly}).

These results translate naturally to simplicial distributions. If a measurement space is obtained by gluing simplicial sets along a common simplicial subset,
\[
X = X^{(1)}\cup_W\cdots\cup_W X^{(n)},
\]
then the simplicial distribution polytope is the corresponding fiber product
\[
\sDist(X,Y) = \sDist(X^{(1)},Y) \times_{\sDist(W,Y)} \cdots \times_{\sDist(W,Y)}
\sDist(X^{(n)},Y).
\]
In this setting, for a simplicial distribution $p\in \sDist(X,Y)$, the sets $A_i$ are given by the restrictions to $W$ of the vertices in the vertex support of the restricted distribution $p|_{X^{(i)}}$. If each $A_i$ is affinely independent and the convex hulls $\Conv(A_i)$ have a unique common point, then $p$ is a vertex (Corollary~\ref{cor:VertofGluing}). A more general vertex characterization holds for arbitrary colimits of simplicial sets (Theorem~\ref{thm:GenGluing}), of which the gluing case above is a special instance. 

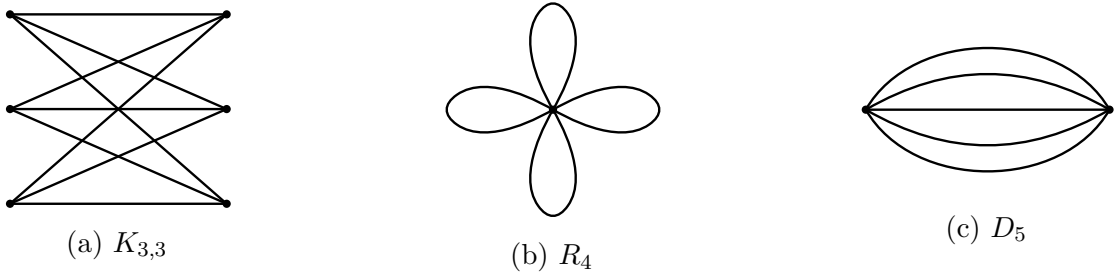
\begin{figure}[ht]
\centering

\begin{minipage}{0.3\textwidth}
\centering
\begin{tikzpicture}[x=0.9cm,y=0.9cm]
  \coordinate (L1) at (0,  1.4);
  \coordinate (L2) at (0,  0);
  \coordinate (L3) at (0, -1.4);
  \coordinate (R1) at (3.2,  1.4);
  \coordinate (R2) at (3.2,  0);
  \coordinate (R3) at (3.2, -1.4);

  \foreach \i in {1,2,3}{
    \foreach \j in {1,2,3}{
      \draw[line width=0.9pt] (L\i)--(R\j);
    }
  }

  \foreach \v in {L1,L2,L3,R1,R2,R3}{
    \node[circle,fill=black,inner sep=1.1pt] at (\v) {};
  }
\end{tikzpicture}

\smallskip
(a) \(K_{3,3}\)
\end{minipage}
\hfill
\begin{minipage}{0.3\textwidth}
\centering
\begin{tikzpicture}[x=0.9cm,y=0.9cm]
  \coordinate (O) at (0,0);

  % top petal
  \draw[line width=0.9pt]
    (O) .. controls (-0.08,0.14) and (-0.55,0.95) .. (-0.22,1.42)
        .. controls (-0.08,1.62) and ( 0.08,1.62) .. ( 0.22,1.42)
        .. controls ( 0.55,0.95) and ( 0.08,0.14) .. (O);

  % right petal
  \draw[line width=0.9pt]
    (O) .. controls (0.14, 0.08) and (0.95, 0.55) .. (1.42, 0.22)
        .. controls (1.62, 0.08) and (1.62,-0.08) .. (1.42,-0.22)
        .. controls (0.95,-0.55) and (0.14,-0.08) .. (O);

  % bottom petal
  \draw[line width=0.9pt]
    (O) .. controls ( 0.08,-0.14) and ( 0.55,-0.95) .. ( 0.22,-1.42)
        .. controls ( 0.08,-1.62) and (-0.08,-1.62) .. (-0.22,-1.42)
        .. controls (-0.55,-0.95) and (-0.08,-0.14) .. (O);

  % left petal
  \draw[line width=0.9pt]
    (O) .. controls (-0.14,-0.08) and (-0.95,-0.55) .. (-1.42,-0.22)
        .. controls (-1.62,-0.08) and (-1.62, 0.08) .. (-1.42, 0.22)
        .. controls (-0.95, 0.55) and (-0.14, 0.08) .. (O);

  \node[circle,fill=black,inner sep=1.1pt] at (O) {};
\end{tikzpicture}

\smallskip
(b) \(R_4\)
\end{minipage}
\hfill
\begin{minipage}{0.3\textwidth}
\centering
\begin{tikzpicture}[x=0.9cm,y=0.9cm]
  \coordinate (A) at (0,0);
  \coordinate (B) at (3.6,0);

  \draw[line width=0.9pt] (A) to[bend left=60] (B);
  \draw[line width=0.9pt] (A) to[bend left=30] (B);
  \draw[line width=0.9pt] (A) -- (B);
  \draw[line width=0.9pt] (A) to[bend right=30] (B);
  \draw[line width=0.9pt] (A) to[bend right=60] (B);

  \node[circle,fill=black,inner sep=1.1pt] at (A) {};
  \node[circle,fill=black,inner sep=1.1pt] at (B) {};
\end{tikzpicture}

\smallskip
(c) \(D_5\)
\end{minipage}

\caption{Basic graph families: the complete bipartite graph \(K_{3,3}\), the rose graph \(R_4\), and the dipole graph \(D_5\).}
\label{fig:intro-graphs}
\end{figure}

Our sharpest results concern the one-dimensional case, where the measurement space is a directed multigraph. The outcome space is $\Delta_{\ZZ_m}$, the natural simplicial model for measurements with outcomes in $\ZZ_m=\{0,1,\cdots,m-1\}$. As a simplicial set, it may be viewed as an unoriented version of the standard $(m-1)$-simplex. This case is of particular interest because simplicial distributions coincide with \emph{distributions on graphs}. A simplicial distribution $p\in \sDist(X,\Delta_{\ZZ_m})$, where $X$ is a one-dimensional simplicial set, that is, a graph, consists of a family of probability matrices associated to the edges, whose row and column sums agree at common vertices. More precisely, if $p_\sigma$ is the probability matrix associated to an edge $\sigma$ with source vertex $x$ and target vertex $y$, then the probability vectors at $x$ and $y$ are given by the row sums and column sums of $p_\sigma$, respectively. We refer to such an assignment as a graph distribution. Writing $\Dist(X,m)$ for graph distributions on $X$ with $m$ outcomes, we obtain a natural isomorphism
\[
\Dist(X,m) \cong \sDist(X,\Delta_{\ZZ_m}).
\]
These graph distribution polytopes are also known as local marginal polytopes in the graphical models literature \cite{wainwright2008graphical}.
They are closely related to transportation polytopes: for a graph consisting of a single edge, fixing the endpoint distributions gives precisely a transportation polytope \cite{de2013combinatorics}; from this perspective,  distributions on a general graph may be viewed as compatible families of such edgewise transportation polytopes coupled through their common vertex marginals.
In this paper, we focus on three classes of graphs: \emph{complete bipartite graphs}, \emph{rose graphs}, and \emph{dipole graphs}; see Figure~\ref{fig:intro-graphs}. Among these, dipole graphs turn out to have the most tractable vertex structure, and we obtain a complete characterization of their vertices (Theorem~\ref{thm:afinoneintersect}).

Let $\tau_1,\dots,\tau_n$ denote the edges of the dipole graph. For a vertex $p \in \sDist(D_n, m)$, the associated set obtained by restricting the vertices in the vertex support of $p|_{\tau_i}$ to the boundary $\Delta^{0}\sqcup \Delta^{0}$ is
\begin{equation*}
A_i
=\set{(e_a,e_b)^T:\; p^{ab}_{\tau_i}\neq 0}.
\end{equation*}
The pairs $(e_a,e_b)$ of canonical basis vectors of $\RR^m$ will be referred to as \emph{product-simplex vertices} in $\RR^{2m}$.
Then our fiber product theorem gives immediately full vertex characterization for the dipole graph (Theorem \ref{thm:afinoneintersect}).
This characterization admits a further graph-theoretic refinement (Theorem~\ref{thm:bybipartgraphs}). To each set $A_k$ we associate a bipartite graph $H_k$ with left vertex set $\{u_0,\dots,u_{m-1}\}$ and right vertex set $\{w_0,\dots,w_{m-1}\}$, where each product-simplex vertex $(e_i,e_j)^T\in A_k$ gives an edge between $u_i$ and $w_j$. This is closely related to the classical support-graph description of vertices of transportation polytopes, where acyclicity of the support graph characterizes extremality \cite{bolker1976simplicial,brualdi2006combinatorial}. Our criterion may be viewed as a refinement of this picture adapted to the present setting. In addition to the acyclicity of the graphs $H_k$, one must also keep track of how their connected components interact across different edges of the dipole. This information is encoded in a matrix $Q(H_1,\dots,H_n)$ assembled from the connected components of the graphs $H_k$: its entries are $0$ or $\pm 1$, according to whether a given vertex belongs to a given connected component and, if so, according to whether it lies on the left or right side of the bipartition.

\begin{thm*} 
Let $p$ be a distribution on $D_n$ with $m$ outcomes. 
Then $p$ is a vertex if and only if the following conditions are satisfied:
\begin{itemize}
    \item for every $1 \leq i \leq n$, the graph $H_i$ contains no cycle;
    \item the rank of $Q(H_1,\dots,H_n)$ is equal to $2m-1$.
\end{itemize}
\end{thm*}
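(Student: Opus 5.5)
The plan is to reduce the statement to the fiber product characterization of dipole vertices (Theorem~\ref{thm:afinoneintersect}) and then translate its two conditions into linear algebra on the $H_i$. The dipole $D_n$ is the gluing of $n$ copies of $\Delta^1$ along $W=\Delta^0\sqcup\Delta^0$. So $\sDist(D_n,m)$ is the fiber product of the $n$ single-edge polytopes over $\sDist(W,m)\cong\Delta^{m-1}\times\Delta^{m-1}\subset\RR^{2m}$.

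For a single edge the polytope is the simplex of $m\times m$ probability matrices. Hence $\Vsupp(p_{\tau_i})=\{\delta_{ab}: p^{ab}_{\tau_i}\neq 0\}$, which is affinely independent, and the restriction map is injective on it. Theorem~\ref{thm:VertofGluingmain}, together with Propositions~\ref{pro:affinXimpliesaffinWmain} and~\ref{pro:VertofGluihalfsecondirmain}, therefore gives: $p$ is a vertex if and only if every $A_i$ is affinely independent and $\bigcap_i\Conv(A_i)$ is a single point. This is Theorem~\ref{thm:afinoneintersect}. It remains to show that these two conditions are the two conditions of the statement.

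\emph{Affine independence.} Every $(e_a,e_b)^T$ has first-block coordinate sum $1$, so affine independence of $A_i$ is the same as linear independence. The vectors $(e_a,e_b)^T$ are exactly the columns of the unsigned incidence matrix of the bipartite graph $H_i$. For bipartite graphs, these columns are linearly independent if and only if the graph has no cycle. The argument is standard: an even cycle gives an alternating-sign dependency, and in a forest a leaf edge can be peeled off inductively.

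\emph{Single intersection point.} The point $q=(p|_{x},p|_{y})$, the two vertex marginals, lies in every $\Conv(A_i)$, and all of its barycentric coordinates are positive (they are the entries $p^{ab}_{\tau_i}$). So $q$ lies in the relative interior of each $\Conv(A_i)$. Consequently $\bigcap_i\Conv(A_i)=\{q\}$ if and only if $\bigcap_i\operatorname{aff}(A_i)=\{q\}$: any second point of the affine intersection would give a small segment through $q$ inside every convex hull. This in turn holds if and only if the direction spaces $D_i$ of the affine hulls meet in $0$.

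For a forest $H_i$ with components $C$ (isolated vertices included), the linear span of the edge vectors of a tree component $C$ is $\{z \text{ supported on } C:\ \langle q_C,z\rangle=0\}$. Here $q_C$ has entry $+1$ on the left vertices of $C$, $-1$ on its right vertices, and $0$ elsewhere; these are exactly the rows of $Q$ coming from $H_i$. The direction space additionally requires first-block sum zero. Hence
\[
D_i=\{z\in\RR^{2m}:\ \langle q_C,z\rangle=0 \text{ for all components } C \text{ of } H_i,\ \langle(\mathbb 1,0),z\rangle=0\}.
\]
Therefore $\bigcap_i D_i=0$ if and only if the row space of $Q$ together with $(\mathbb 1,0)$ spans $\RR^{2m}$.

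Finally, I claim $(\mathbb 1,0)$ never lies in the row space of $Q$. On one hand, $\langle q_C,q\rangle=0$ for every $C$, because within a component the left and right marginal masses both equal the total mass of $p_{\tau_i}$ on the edges of $C$. On the other hand, $\langle(\mathbb 1,0),q\rangle=1$. So the spanning condition is equivalent to $\operatorname{rank}Q=2m-1$.

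I expect the main obstacle to be the bookkeeping in the span description of $D_i$. One has to be careful that isolated vertices count as components and check that this matches the paper's definition of $Q$. One also has to verify the tree lemma that the edge-vector span of a bipartite tree is exactly the annihilator of the signed indicator $q_C$, which I would prove by a dimension count: $|C|-1$ independent edges against one linear constraint.
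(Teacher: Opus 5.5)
Your proof is correct, and its skeleton matches the paper's. You reduce to Theorem~\ref{thm:afinoneintersect}, identify affine independence of $A_i$ with acyclicity of $H_i$ by leaf-peeling (Proposition~\ref{pro:affinHXB}), and describe the span of a tree component as the annihilator of its signed indicator row by a dimension count (Lemma~\ref{lem:spanXUX}).

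Where you differ is the last equivalence. Write $q=p|_{\Delta^0\sqcup\Delta^0}$. The paper proves $\bigcap_i\operatorname{span}(A_i)=\ker Q$ and then shows that $\bigcap_i\Conv(A_i)=\{q\}$ if and only if $\bigcap_i\operatorname{span}(A_i)=\operatorname{span}(q)$. Its nontrivial direction is an explicit construction: take $\tilde x$ in the intersection of spans not proportional to $q$, add $tq$ until every coefficient is nonnegative, and rescale.

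You instead use that $q$ has strictly positive barycentric coordinates in each simplex $\Conv(A_i)$, so it lies in every relative interior. This lets you replace convex hulls by affine hulls. The question then becomes whether the direction spaces $D_i=\operatorname{span}(A_i)\cap(\mathbf 1,0)^{\perp}$ meet trivially. Your observation $q\perp\operatorname{row}(Q)$ shows that appending the first-block indicator $(\mathbf 1,0)$ raises the rank by exactly one.

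The underlying linear algebra is the same, since $\ker Q=\operatorname{span}(q)\oplus\bigcap_i D_i$. Your version avoids the coefficient bookkeeping and makes it transparent why the threshold is $2m-1$ rather than $2m$.

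Your worry about isolated vertices is justified, and you resolve it the right way. The criterion holds only if, as in Definition~\ref{def:QA1An}, every $H_i$ carries the full vertex sets, so that isolated vertices contribute their own rows. With the support-only vertex sets of Definition~\ref{def:HXXX}, a deterministic $p$, which is a vertex, would give $\operatorname{rank}Q=1<2m-1$. The full-vertex-set convention is exactly what the vanishing conditions off $I_X,J_X$ in Lemma~\ref{lem:spanXUX} require of $Q$ in Proposition~\ref{pro:spaAiQA}.
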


A direct application of the geometric vertex criteria in terms of the sets $A_i$ yields only partial results for the rose graph. A complete characterization, however, can be obtained using the \emph{collapsing} technique introduced in \cite{kharoof2023homotopical}. The key idea is that the vertex structure behaves well under collapses of simplices. 
%More precisely, such collapses are realized as pushouts along degeneracy maps. 
In particular, the dipole graph $D_{n+1}$ collapses onto the rose graph $R_n$ by collapsing one of its edges to a single vertex. This allows us to obtain a similar graph-theoretic criterion (Corollary~\ref{cor:characgraphs2}). Finally, we apply the collapsing method to analyze the various ways in which complete bipartite graphs collapse onto dipole and rose graphs, yielding lower bounds on the number of vertices (Corollary~\ref{cor:bipartiwedge} and Proposition~\ref{pro:bipartiedges}). Since any connected graph can be collapsed onto a rose graph by choosing a maximal spanning tree, our methods also provide lower bounds on the number of vertices for arbitrary graph distribution polytopes.

The structure of our paper is as follows.
Section~\ref{sec:GeoPoly} develops the polytope-theoretic framework:
we introduce the preorder, establish its basic properties, prove
the vertex characterization for limits of diagrams, and specialize to
fiber products.
Section~\ref{sec:Exsimpldist} reviews simplicial distributions and
contextuality, establishes that simplicial distribution polytopes are in
standard form, and translates the results of
Section~\ref{sec:GeoPoly} to measurement spaces that arise as colimits,
particularly those obtained by gluing along a common subspace.
Section~\ref{sec:distgraphs} applies these tools to complete bipartite
graphs, rose graphs, and dipole graphs, obtaining vertex criteria and
explicit constructions in terms of product-simplex vertices.
Section~\ref{sec:graphcharac} reformulates these criteria in
graph-theoretic and linear-algebraic terms, culminating in the acyclicity
and rank characterizations.
Section~\ref{sec:1dim} applies the collapsing method to connected
one-dimensional measurement spaces, deriving lower bounds on contextual
vertices with emphasis on complete bipartite graphs.

\paragraph{Acknowledgments.}
This work was supported by the Air Force Office of Scientific Research (AFOSR) under award number FA9550-24-1-0257.
The second author also acknowledges support from the Horizon Europe project FoQaCiA (Grant Agreement No.~101070558).
We thank Atak Talay Y\"ucel for providing computations for the one-dimensional simplicial distributions appearing in Section~\ref{sec:1dim}, and Selman Ipek for enlightening discussions.

\section{Vertices of polytopes in standard form}\label{sec:GeoPoly}
 
%{In this section, we investigate structural properties of polytopes in standard form with respect to a natural preorder relation, culminating in a characterization of the vertices of polytopes that arise as limits of diagrams of polytopes in standard form. We then apply these results to the case of fiber products. This leads to our main result of the section, which provides a sufficient geometric condition for identifying vertices of fiber product polytopes.}

In this section, we investigate polytopes in standard form. We introduce a natural preorder relation on such polytopes. This preorder leads to a
complete characterization of vertices of polytopes arising as limits of
diagrams. We then apply this to fiber products, where the main result of
this section---Theorem \ref{thm:VertofGluingmain}---provides a
sufficient geometric condition for identifying vertices.

\subsection{Polytopes in standard form}

%{In this section, we define polytopes in standard form, with polytopes of graph distributions serving as our main examples. We then introduce a preorder relation on such polytopes and establish several properties related to this preorder.}

In this subsection, we introduce the preorder relation and convex-geometric properties that will be used throughout the paper. Our main examples are graph distributions, viewed as special cases of simplicial distributions to be introduced in later sections.

\begin{defn}\label{def:polystan}
A \emph{(convex) polytope} in the Euclidean space $\RR^n$ is a bounded set of the form
\[
L = \{\, x \in \RR^n :\; A x \le b \,\},
\]
where $A$ is a real $m \times n$ matrix and $b \in \RR^m$. That is, a polytope is a bounded intersection of finitely many closed half-spaces in $\RR^n$.

A polytope $L \subset \RR^n$ is said to be in \emph{standard form} if
$$
L = \{\, x \in \RR^n :\; Ax = b \text{ and } x_i \ge 0 \text{ for all } i \,\},
$$
where $A$ is a real $m \times n$ matrix and $b \in \RR^m$, and where $L$ is bounded.
Equivalently, a polytope in standard form is a bounded intersection of an affine subspace 
with the nonnegative orthant $\RR^n_{\ge 0}$. 
\end{defn}

It is a standard fact that every polytope is affinely isomorphic to a polytope in standard form (see, e.g., \cite[Section 1.1]{bertsimas1997introduction}). Writing $a_i$ for the $i$\textsuperscript{th} row of $A$, we have that $a_i \cdot x = b_i$ for all $i$. We introduce two variables $x^+_j$ and $x^-_j$ for each variable, and a variable $s_i$ for each equation. Then we define the polytope in standard form
\[
L' = \{\, (x^+, x^-, s) \in \RR^{2n + m} :\; a_i \cdot (x^+ - x^-) + s_i = b_i \text{ for all } i, \text{ and } x^+_j, x^-_j, s_i \ge 0 \text{ for all } i, j \,\}.
\]
Under the linear map $f:\RR^{2n+m} \to \RR^{n}$ defined by $f(x^+, x^-, s) = x^+ - x^-$, the polytope $L'$ is mapped bijectively onto $L$. Throughout this work we focus on polytopes in standard form. Accordingly, although some of our results extend to arbitrary polytopes, for clarity we assume that all polytopes are in standard form unless stated otherwise.

Our goal is to study the vertex structure of polytopes in standard form.  
We begin by recalling key notions from convex geometry that will be used throughout this work.

\begin{defn}\label{def:polytope-notions}
Let $L \subseteq \RR^n$ be a polytope.
\begin{enumerate}
    \item A hyperplane $H = \{x \in \RR^n :\; c^\top x = \alpha\}$ is called \emph{supporting} for $L$ if $c^\top x \le \alpha$ for all $x \in L$ and $H \cap L \neq \varnothing$.
    
    \item A \emph{face} of $L$ is a subset of the form $F = L \cap H$ for some supporting hyperplane $H$. We allow the \emph{improper faces} $F = \varnothing$ and $F = L$. All other faces are called \emph{proper}.
    
    \item A \emph{vertex} of $L$ is a $0$-dimensional face, i.e., a point $v \in L$ that cannot be written as a nontrivial convex combination of two distinct points in $L$. Equivalently, $v$ is a vertex if whenever $v = tb + (1-t)c$ for some $b, c \in L$ and $0 < t < 1$, then necessarily $b = c = v$.
    
    \item An \emph{edge} is a $1$-dimensional face of $L$.
    
    \item A \emph{facet} is a face of codimension $1$, i.e., of dimension $\dim(L) - 1$.
\end{enumerate}
\end{defn}

Our main source of examples comes from simplicial distributions to be defined in the next section. Instead, for now we introduce the one-dimensional version which corresponds to distributions on graphs. 

\begin{defn}\label{def:multigraph}
Let $X$ be a directed (multi)graph with vertex set $X_0$ and edge set $X_1$ together with source and target maps $d_1, d_0 \colon X_1 \to X_0$. A \emph{graph distribution} {$p$} on $X$ with outcomes in $\ZZ_m{=\set{0,1,\dots,m-1}}$ consists of 
    \begin{itemize}
        \item a probability vector 
        \[
        p_x =   \begin{pmatrix}
    p^0_x \\
    \vdots\\
    p^{m-1}_x
    \end{pmatrix}, \qquad x\in X_0,
        \]
        \item a probability matrix
        \[
        p_\sigma = \begin{pmatrix}
    p_{\sigma}^{0,0} \;\dots \dots \; p_{\sigma}^{0,m-1}\\
    \vdots \;\;\;\;\vdots \;\;\;\; \vdots \\
    p_{\sigma}^{m-1,0} \; \dots \dots \; p_{\sigma}^{m-1,m-1}
    \end{pmatrix}, \qquad \sigma\in X_1.
        \]
    \end{itemize}
such that 
\[
  p_{d_1(\sigma)}=\begin{pmatrix}
    p_{\sigma}^{0,0} +\dots+ p_{\sigma}^{0,m-1}\\
    \vdots  \\
    p_{\sigma}^{m-1,0} + \dots + p_{\sigma}^{m-1,m-1}
    \end{pmatrix}, \qquad
    p_{d_0(\sigma)}= \begin{pmatrix}
    p_{\sigma}^{0,0} +\dots + p_{\sigma}^{m-1,0}\\
    \vdots \\
    p_{\sigma}^{0,m-1} + \dots + p_{\sigma}^{m-1,m-1}
    \end{pmatrix}.
\]
\end{defn}
We will write $\on{Dist}(X{,m})$ for the polytope of {graph} distribution{s} on $X$ {with outcomes in $\ZZ_m$}.

A distribution is called \emph{deterministic} if all the probability matrices are deterministic, i.e., each matrix has exactly one entry equal to $1$ and all other entries equal to $0$. Deterministic distributions are vertices of $\on{Dist}(X,m)$. 

\begin{rem}
In the literature graph distributions are also called the \emph{local marginal polytope} \cite{wainwright2008graphical}. In quantum foundations they are particular examples of \emph{non-signaling polytopes} \cite{barrett2005nonlocal}. This class of polytopes has been generalized to simplicial distributions in {an} earlier work of the authors \cite{okay2022simplicial}.    
\end{rem}

\begin{ex}\label{ex:cycle}
A famous example the $n$-cycle graph $C^{(n)}$ consisting of the edges $\sigma_1,\dots,\sigma_n$ 
satisfying
\[
d_0(\sigma_i) = d_1(\sigma_{i+1}), \quad 1 \leq i \leq n-1, \qquad d_0(\sigma_n) = d_1(\sigma_1).
\]
In \cite{kharoof2024extremal}, the authors proved that the vertices of $\on{sDist}(C^{(n)},m)$ are given by the \emph{$k$-order cycle distributions} specified by a tuples $ (a_1^{(1)},\cdots,a_n^{(1)};a_1^{(2)},\cdots,a_n^{(2)};\cdots;
a_1^{(k)},\cdots,a_n^{(k)})$ of elements in $\ZZ_m$ such that $a_i^{(j)}\neq a_i^{(s)}$ for every $1\leq i \leq n$, $j\neq s$. The corresponding distribution is defined by
$$
p_{\sigma_i}^{a,b} = \left\lbrace
\begin{array}{ll}
\frac{1}{k} & (a,b)=(a_i^{(j)},a_{i+1}^{(j)})\; \, \text{for some $1\leq j \leq k$} \\  
0 & \text{otherwise,}
\end{array}
\right.
$$
for $1\leq i \leq n-1$, and
$$
p_{\sigma_n}^{a,b} = \left\lbrace
\begin{array}{ll}
\frac{1}{k} & (a,b)=(a_n^{(j)},a_{1}^{(j+1)}) \; \, \text{for some $1\leq j \leq k-1$}\\  
\frac{1}{k} & (a,b)=(a_n^{(k)},a_{1}^{(1)})\\  
0 & \text{otherwise.}
\end{array}
\right.
$$
The $1$-order cycle distributions are precisely the deterministic distributions. The $4$-cycle case {with two outcomes} is also known as the Clauser-Horne-Shimony-Holt (CHSH) scenario in quantum foundations. {In this scenario,} the $2$-order cycle distributions are known as the \emph{Popescu-Rohrlich (PR) boxes} \cite{popescu1994quantum}. For instance, a PR box is given by  
\begin{equation}\label{eq:p +-}
p_{\sigma_1} = p_+:= \begin{pmatrix}
\frac{1}{2} & 0 \\
0 & \frac{1}{2} 
\end{pmatrix},
\quad
p_{\sigma_2} = p_{\sigma_3} = p_{\sigma_4} = p_-:=
\begin{pmatrix}
 0 & \frac{1}{2} \\
\frac{1}{2} & 0
\end{pmatrix}. 
\end{equation}    
%\coc{We switch to $i,j$ from $a,b$.}
For us the case $n=1$ will be of particular interest. In this case, we will write 
\[
\on{cyc}(m) = \{ [i_1,\dots,i_k] :\; i_1,\dots,i_k \text{ are distinct numbers in } \ZZ_m,\; 1\leq k\leq m\}.
\]
For $\mu \in \on{cyc}(m)$, the distribution $p(\mu)$ is given by
\[
p(\mu)_\sigma^{a,b} = \begin{cases}
    \frac{1}{|\mu|} & \mu(a)=b,\\
    0 & \text{otherwise.}
\end{cases}
\]
For example, with this notation we can describe some of the vertices of $\on{Dist}(C^{(1)},4)$ as follows:
\[
\begin{aligned}
[0]=\begin{pmatrix}%
1& 0 & 0 & 0 \\
0 & 0 & 0 & 0 \\
0 & 0 & 0 & 0 \\
0 & 0 & 0 & 0 
\end{pmatrix},&\qquad
[0,1]=\begin{pmatrix}%
0& \frac{1}{2} & 0 & 0 \\
\frac{1}{2} & 0 & 0 & 0 \\
0 & 0 & 0 & 0 \\
0 & 0 & 0 & 0 
\end{pmatrix},\\
[0,1,2]=\begin{pmatrix}%
0 & \frac{1}{3} & 0 & 0 \\
0 & 0 & \frac{1}{3} & 0 \\
\frac{1}{3} & 0 & 0 & 0 \\
0 & 0 & 0 & 0 %\begin{cihan}{purple}{Move to section intro. Say something like:
%\medskip

%Our results in this section generalize the polytope theoretic observations in \cite{kharoof2024extremal} to polytopes in standard form.  
%}
%For simplicial distributions the relation coincides with the one given in \cite[Definition 2.3]{kharoof2024extremal}.
%\end{cihan}

%\begin{cihan}{purple}{remove, merged}
%\begin{defn}\label{def:Vsupp}
%{\rm
%Let $L$ be a polytope and let $x \in L$. We define the \emph{vertex support of $x$}, denoted by $\Vsupp(x)$, to be the set of vertices $y \in L$ such that $y \preceq x$.
%}
%\end{defn}
% \end{cihan}

\end{pmatrix},&\qquad
[0,1,2,3]=\begin{pmatrix}%
0 & \frac{1}{4} & 0 & 0 \\
0 & 0 & \frac{1}{4} & 0 \\
0 & 0 & 0 & \frac{1}{4} \\
\frac{1}{4} & 0 & 0 & 0 
\end{pmatrix}.
\end{aligned}
\]
\end{ex}

\begin{defn}\label{def:GenRelation}
Let $L$ be a polytope. For $x \in L$, define
$$
\operatorname{supp}(x) := \{\, i :\; x_i \neq 0 \,\}.
$$
For $x, y \in L$, we write ${y} \preceq {x}$ if
$$
\operatorname{supp}({y}) \subseteq \operatorname{supp}({x}).
$$
 We define the \emph{vertex support of $x$}, denoted by $\Vsupp(x)$, to be the set of vertices $y \in L$ such that $y \preceq x$.
\end{defn}

This relation $\preceq$ is a preorder on the polytope $L$.

For a subset $X \subseteq \RR^n$, we write $\Conv(X)$ for the \emph{convex hull of $X$}, the set of all convex combinations of elements of $X$:
\[
\Conv(X) = \left\{\sum_{i=1}^k \alpha_i x_i :\; k \in \NN, x_i \in X, \alpha_i \geq 0, \sum_{i=1}^k \alpha_i = 1\right\}.
\]

\begin{pro}\label{pro:yleqxVsupp}
Given a polytope $L$ and $x \in L$, we have 
$$
\Conv(\Vsupp({x}))=\set{y\in L :\; \; {y} \preceq {x} }.
$$
\end{pro}
\begin{proof}
Suppose that ${y} \preceq {x}$. Since the relation $\preceq$ is transitive, we have 
$\Vsupp({y}) \subseteq \Vsupp({x})$. On the other hand, clearly ${y} \in \Conv(\Vsupp({y}))$. Therefore, 
${y} \in \Conv(\Vsupp({x}))$.  

Now suppose that $y \in \Conv(\Vsupp(x))$, and write 
$
\Vsupp(x)=\set{x^{(1)},\dots,x^{(k)}}
$.
Then there exist coefficients $0 \leq \alpha_1,\dots,\alpha_k \leq 1$ with $\sum_{i=1}^k \alpha_i=1$ such that 
\begin{equation}\label{eq:y=alphax}
{y}=\alpha_1 {x}^{(1)} + \dots +\alpha_k {x}^{(k)}.
\end{equation}
If $y_i \neq 0$, then by Equation~\eqref{eq:y=alphax}, there exists some $1\leq j \leq k$ 
such that ${x}^{(j)}_i \neq 0$. By the definition of
$\Vsupp(x)$, this implies that $x_i \neq 0$. Hence ${y}\preceq {x}$.
\end{proof}
%

%\begin{rem}\label{rem:carrier}

We now give another characterization of the convex hull of the vertex support. This relies on the notion of carrier face of a point in a polytope.

%\coc{earlier remark format was also good, but this version better emphasizes the connection, imo}

\begin{defn}\label{def:carrier}
For a subset $X\subseteq \RR^n$, we write $\Aff(X)$ for the \emph{affine hull} of $X$:
\[
\Aff(X) = \left\{\sum_{i=1}^k \alpha_i x_i :\; k \in \NN, x_i \in X, \alpha_i \in \RR, \sum_{i=1}^k \alpha_i = 1\right\}.
\]
The \emph{relative interior} of a polytope $L$, denoted by $\relint(L)$, consists of those points
$x \in L$ such that there exists $\varepsilon > 0$ with the property that
$y \in L$ whenever $y \in \Aff(L)$ and of distance less than $\varepsilon$ from $x$, i.e., 
$d(x,y) \le \varepsilon$. 
For $x$ in a polytope $L$, we define the \emph{carrier face} of $x$, denoted by $\carr(x)$, to be the unique face
$F$ {(see Definition \ref{def:polytope-notions})} such that $x \in \relint(F)$.
\end{defn} 
%Then, it is well-known (see for instance \cite[Proposition 5.1]{abramsky2016possibilities})) that 
%$$
%\carr(x)=\set{y\in L : \; {y} \preceq {x} }.
%$$
%Therefore, combining this with Proposition~\ref{pro:yleqxVsupp}, we obtain  
%\begin{equation}\label{eq:convVsupp=carr}
%\Conv(\Vsupp({x}))=\carr(x).
%\end{equation}

%\end{rem}

\begin{pro}\label{pro:carrier-conv-vsupp}
Let $L$ be a polytope in standard form and let $x\in L$. Then
\begin{equation}\label{eq:convVsupp=carr}
\Conv(\Vsupp({x}))=\carr(x).
\end{equation}
\end{pro}
\begin{proof}
It is well-known (see for instance \cite[Proposition 5.1]{abramsky2016possibilities}) that
\[
\carr(x)=\set{y\in L : \; y \preceq x }.
\]
Therefore, combining this with Proposition~\ref{pro:yleqxVsupp}, we obtain the result.
\end{proof}

The key consequence of this observation is the following lemma characterizing the preorder relation in terms of convex decompositions.

\begin{lem}\label{lem:xleqythen}
For $x,y \in L$, where $L$ is a polytope in standard form, the following are equivalent:
\begin{enumerate}
    \item ${y}\preceq {x}$;
    \item there exist $0 < \alpha \leq 1$ and ${z} \in L$ such that
    ${x}=\alpha {y}+ (1-\alpha) {z}$.
\end{enumerate}
\end{lem}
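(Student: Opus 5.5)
We prove the two implications separately. Throughout, $L=\{x\in\RR^n:\; Ax=b,\ x\ge 0\}$ is bounded.

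\emph{(2) $\Rightarrow$ (1).} Suppose $x=\alpha y+(1-\alpha)z$ with $0<\alpha\le 1$ and $z\in L$. Take $i\in\operatorname{supp}(y)$, so $y_i>0$ by nonnegativity. Since $\alpha>0$ and $(1-\alpha)z_i\ge 0$, we get $x_i\ge \alpha y_i>0$. Hence $i\in\operatorname{supp}(x)$, and so $y\preceq x$. No appeal to earlier results is needed for this direction.

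\emph{(1) $\Rightarrow$ (2).} Assume $\operatorname{supp}(y)\subseteq\operatorname{supp}(x)$. If $y=x$, take $\alpha=1$ and any $z$. Otherwise consider the ray from $y$ through $x$ beyond $x$:
\[
z_t = x + t(x-y), \qquad t\ge 0.
\]
Since $Ay=Ax=b$, we have $Az_t=b$ for every $t$, so only nonnegativity of $z_t$ needs checking.

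For $i\notin\operatorname{supp}(x)$, the inclusion of supports gives $y_i=0$, so $(z_t)_i=0$. For $i\in\operatorname{supp}(x)$ we have $x_i>0$; the set of such $i$ is finite, so there is $t>0$ small enough that $(z_t)_i=x_i+t(x_i-y_i)\ge 0$ for all of them. For this $t$, $z:=z_t\in L$.

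Solving $z=(1+t)x-ty$ for $x$ gives
\[
x=\tfrac{t}{1+t}\,y+\tfrac{1}{1+t}\,z,
\]
so $\alpha=t/(1+t)\in(0,1)$ works.

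The only point needing care is this choice of a positive $t$. It relies on two facts: $x$ has strictly positive entries exactly on its support, and the support inclusion keeps the coordinates outside $\operatorname{supp}(x)$ identically zero along the ray. Boundedness of $L$ is not used.

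Alternatively, (1) $\Rightarrow$ (2) follows from Proposition~\ref{pro:carrier-conv-vsupp} and Proposition~\ref{pro:yleqxVsupp}: these give $y\in\carr(x)$, and $x\in\relint(\carr(x))$ then lets us extend the segment from $y$ through $x$ slightly within the face. The direct computation above avoids this machinery.
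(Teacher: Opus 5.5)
Your proof is correct and follows the paper's. The $(2)\Rightarrow(1)$ direction is identical, and your ray point $z_t=x+t(x-y)$ is exactly the paper's point $z=\mu x+(1-\mu)y$ with $\mu=1+t$, worked out in coordinates as in the paper's ``more direct argument''. The one difference is the step size: the paper takes the extreme value $\alpha=\min_{i\in\operatorname{supp}(y)} x_i/y_i$ and needs boundedness to show $\alpha<1$, while your small step $t$ avoids boundedness entirely.
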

\begin{proof}
$(2)\Rightarrow (1)$:
If $x=\alpha y+(1-\alpha)z$ with $0<\alpha\leq 1$ and $z\in L$, then for every index $i$ with $y_i>0$ we have $x_i=\alpha y_i+(1-\alpha)z_i\geq \alpha y_i>0$. Hence $\on{supp}(y)\subseteq \on{supp}(x)$, that is, $y\preceq x$.

$(1)\Rightarrow (2)$: The case $x=y$ is immediate, so assume $x\neq y$ and $y\preceq x$. 
%Remark \ref{rem:carrier} 
Proposition \ref{pro:carrier-conv-vsupp}
implies that
$y \in \carr(x)$. In addition, we have
$x \in \relint(\carr(x))$, so by \cite[Theorem 6.4]{rockafellar2015convex} there exists $\mu>1$ such that $z:=\mu x + (1-\mu)y$ belongs to $\carr(x)$.
Setting $\alpha = 1-1/\mu$ gives the desired convex decomposition in (2) with $\alpha \in (0,1)$. A more direct argument can be given as follows.
Since $x\neq y$, we necessarily have $y\neq 0$. ($y=0$ implies $L=\{0\}$.) Then the support $\on{supp}(y)$
is nonempty. Define
\[
\alpha=\min\left\{\frac{x_i}{y_i}: i\in \on{supp}(y)\right\}.
\]
Because $y\preceq x$, each ratio $x_i/y_i$ is positive, so $\alpha>0$.
We claim that $\alpha<1$. Suppose that $\alpha\geq 1$. Then for every $i\in \on{supp}(y)$,
$x_i\geq y_i$,
and for $i\notin \on{supp}(y)$ we have $y_i=0$, so $x_i-y_i=x_i\geq 0$. Thus $x-y\geq 0$, and since
$x\neq y$, we have $x-y\neq 0$. On the other hand,
$A(x-y)=Ax-Ay=b-b=0$.
Hence $x-y$ is a nonzero vector in $\ker(A)\cap \mathbb{R}_{\geq 0}^n$, which would make
$x+t(x-y)\in L$ for all $t\geq 0$,
contradicting the boundedness of $L$. Therefore $\alpha<1$.

Now define
\[
z:=\frac{x-\alpha y}{1-\alpha}.
\]
For each $i\notin \on{supp}(y)$, we have $y_i=0$, so
$z_i=\frac{x_i}{1-\alpha}\geq 0$.
For each $i\in \on{supp}(y)$, the definition of $\alpha$ gives
$\frac{x_i}{y_i}\geq \alpha$,
hence $x_i-\alpha y_i\geq 0$, so again $z_i\geq 0$. Therefore $z\geq 0$. Also,
\[
Az=\frac{Ax-\alpha Ay}{1-\alpha}
=\frac{b-\alpha b}{1-\alpha}
=b.
\]
Thus $z\in L$, and by construction
$x=\alpha y+(1-\alpha)z$.
\end{proof}

An immediate consequence of this result is the following characterization of being a vertex in terms the preorder.

\begin{cor}\label{cor:vert=min}
A point $x\in L$ a vertex if and only if it is minimal with respect to the preorder $\preceq$.        
\end{cor}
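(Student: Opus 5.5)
We first pin down what ``minimal'' means for a preorder: $x$ is minimal if whenever $y\in L$ and $y\preceq x$, then also $x\preceq y$, i.e.\ $\on{supp}(y)=\on{supp}(x)$. Both directions then reduce to Lemma~\ref{lem:xleqythen}, with a short boundedness argument to turn equality of supports into equality of points.

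\emph{Vertex $\Rightarrow$ minimal.} Suppose $x$ is a vertex and $y\preceq x$. Lemma~\ref{lem:xleqythen} gives $0<\alpha\le 1$ and $z\in L$ with $x=\alpha y+(1-\alpha)z$. If $\alpha=1$ then $y=x$. If $0<\alpha<1$, the vertex condition in Definition~\ref{def:polytope-notions} forces $y=z=x$. In either case $y=x$, so $\on{supp}(y)=\on{supp}(x)$. This shows the stronger statement that a vertex is the unique element of $L$ below it.

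\emph{Minimal $\Rightarrow$ vertex.} Suppose $x$ is minimal and write $x=tb+(1-t)c$ with $b,c\in L$ and $0<t<1$. By the implication $(2)\Rightarrow(1)$ of Lemma~\ref{lem:xleqythen}, applied to $y=b$ and to $y=c$, we get $b\preceq x$ and $c\preceq x$. Minimality then gives $\on{supp}(b)=\on{supp}(c)=\on{supp}(x)=:S$.

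It remains to show $b=x$; by symmetry, $c=x$ follows the same way. Suppose instead $b\neq x$. Then $d:=b-x$ is a nonzero vector in $\ker A$ supported in $S$. Since all coordinates of $x$ on $S$ are strictly positive, $x-\varepsilon d$ is nonnegative for small $\varepsilon>0$, and it lies in $L$. Now increase $\varepsilon$ until some coordinate in $S$ first hits zero. This threshold $\varepsilon^*$ is finite: if every coordinate of $d$ on $S$ were nonpositive, then $-d\ge 0$ would be a nonzero element of $\ker A$, contradicting boundedness, exactly as in the proof of Lemma~\ref{lem:xleqythen}. The point $w=x-\varepsilon^* d$ lies in $L$ and has $\on{supp}(w)\subsetneq S$, so $w\preceq x$ but $x\not\preceq w$, contradicting minimality. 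Hence $b=x$, likewise $c=x$, and $x$ is a vertex.

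The main obstacle is this final step, where equal supports must yield equal points. An alternative is to invoke Proposition~\ref{pro:carrier-conv-vsupp}: minimality makes $\carr(x)=\{y\in L: y\preceq x\}$ consist of points with the same support as $x$, so this face has no proper subfaces and is therefore a single point. The direct ray-shooting argument above is more elementary, so I would use it.
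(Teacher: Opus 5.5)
Your proof is correct. Like the paper, you get both directions from Lemma~\ref{lem:xleqythen}; the paper calls the corollary an immediate consequence of that lemma and gives no further argument. The difference is in what ``minimal'' means.

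The paper implicitly reads it as ``the only $y\in L$ with $y\preceq x$ is $x$ itself.'' This is the form used in Proposition~\ref{pro:Mainresultpoly}, through Lemma~\ref{lem:lim=pre} and the condition $\lim\chi_x=\{x\}$. Under that reading, minimal $\Rightarrow$ vertex takes one line: from $x=tb+(1-t)c$ you get $b\preceq x$ and $c\preceq x$, hence $b=c=x$.

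You use the standard preorder notion instead: $y\preceq x$ implies $\on{supp}(y)=\on{supp}(x)$. This is a weaker hypothesis, so you need the extra step turning equal supports into equal points. That step is sound. Boundedness gives a finite threshold $\varepsilon^*$, and the resulting point lies in $L$ with strictly smaller support. It is exactly the point $z=(x-\alpha b)/(1-\alpha)$, with $\alpha=\min_{i\in S} x_i/b_i$, from the direct proof of Lemma~\ref{lem:xleqythen} applied with $y=b$. So you could cite that construction rather than redo it.

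What your version buys, together with your first direction (a vertex is the unique point below it), is the fact that the two notions of minimality coincide for polytopes in standard form. The paper uses this tacitly when it moves between ``minimal'' and $\{y\in L:\, y\preceq x\}=\{x\}$.
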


The following consequence will be used later.
 
\begin{cor}\label{cor:fVsupp}
Any affine map $f \colon L_1 \to L_2$ preserves the preorder $\preceq$. In particular, for ${x}\in L_1$ and ${y}\in\Conv\big(\Vsupp({x})\big)$, we have
\[
f({y})\in\Conv\big(\Vsupp(f({x}))\big).
\]
\end{cor}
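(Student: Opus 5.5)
The plan is to get the first claim directly from Lemma~\ref{lem:xleqythen}, which turns the support preorder into a statement about convex decompositions. The second claim then follows by combining the first with Proposition~\ref{pro:yleqxVsupp}. Throughout, $L_1$ and $L_2$ are polytopes in standard form, and $f$ is affine, so it commutes with convex combinations: $f(\alpha u+(1-\alpha)v)=\alpha f(u)+(1-\alpha)f(v)$ for $u,v\in L_1$ and $\alpha\in[0,1]$.

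For the first claim, let $y\preceq x$ in $L_1$. The implication $(1)\Rightarrow(2)$ of Lemma~\ref{lem:xleqythen} gives $0<\alpha\le 1$ and $z\in L_1$ with $x=\alpha y+(1-\alpha)z$. Applying $f$ and using affineness,
\[
f(x)=\alpha f(y)+(1-\alpha)f(z),
\]
where $f(y)$ and $f(z)$ lie in $L_2$ and the coefficient $\alpha$ is still strictly positive. The implication $(2)\Rightarrow(1)$ of the same lemma, now applied in $L_2$, yields $f(y)\preceq f(x)$. One could also argue coordinatewise: if $f(y)_j>0$, then $f(x)_j\ge \alpha f(y)_j>0$ because $f(z)_j\ge 0$. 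Either way, the first claim holds.

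For the second claim, let $y\in\Conv(\Vsupp(x))$. Proposition~\ref{pro:yleqxVsupp}, applied in $L_1$, gives $y\preceq x$. By the first claim, $f(y)\preceq f(x)$ in $L_2$. Proposition~\ref{pro:yleqxVsupp}, applied in $L_2$ to the point $f(x)$, then gives $f(y)\in\Conv(\Vsupp(f(x)))$.

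I do not expect a real obstacle. The only point needing care is that Lemma~\ref{lem:xleqythen} is invoked in two different polytopes, so it matters that both $L_1$ and $L_2$ are in standard form. It also matters that the coefficient $\alpha$ is strictly positive, since $\alpha=0$ would give no information about supports.
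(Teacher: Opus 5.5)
Your proposal is correct and follows the paper's proof essentially verbatim: Lemma~\ref{lem:xleqythen} in the direction $(1)\Rightarrow(2)$ in $L_1$, then affineness of $f$, then the direction $(2)\Rightarrow(1)$ in $L_2$, with the second claim obtained from Proposition~\ref{pro:yleqxVsupp} applied in both polytopes. Your remark that both $L_1$ and $L_2$ must be in standard form is also the implicit standing assumption in the paper.
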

\begin{proof}
Let ${x},{y} \in L_1$ with ${y} \preceq {x}$. By Lemma \ref{lem:xleqythen}, there exist $0<\alpha\leq1$ and ${z} \in L_1$ such that 
${x} = \alpha {y} + (1-\alpha){z}$. Then,
%By the convexity of $f$, it follows that
\[
f({x}) = f\bigl(\alpha {y} + (1-\alpha){z}\bigr) 
= \alpha f({y}) + (1-\alpha)f({z}).
\]
Applying Lemma \ref{lem:xleqythen} again, we obtain that $f({y}) \preceq f({x})$. The second statement comes from an application of Proposition~\ref{pro:yleqxVsupp} to the pair
$f(y), f(x)$. This implies $f(y) \in \Conv\big(\Vsupp(f(x))\big)$.
\end{proof}

Another important consequence of 
%Remark \ref{rem:carrier} 
Proposition \ref{pro:carrier-conv-vsupp}
is about sets that generate faces of polytopes.

\begin{defn}\label{def:closeVert}
A set $\{x^{(1)},\dots,x^{(k)}\}$ of vertices of a polytope $L$ is
said to \emph{generate a face}
if its convex hull $\Conv(\{x^{(1)},\dots,x^{(k)}\})$ is a face of $L$. 
\end{defn}

An example of a set that generates a face is $\Vsupp(x)$. This follows from the identification of the convex hull of the vertex support with the carrier face in Equation~(\ref{eq:convVsupp=carr}).  
 
%\coc{maybe better as a lemma}
 
\begin{lem}\label{lem:genface}
Let $\{x^{(1)},\dots,x^{(k)}\}$ be a set that generates a face of $L$. 
Then for any choice of coefficients
$\alpha_1,\dots,\alpha_k \in [0,1]$ with $\sum_{i=1}^k \alpha_i = 1$, we have
$$
\Vsupp\!\left(\alpha_1 x^{(1)} + \cdots + \alpha_k x^{(k)}\right)
\subseteq \{x^{(1)},\dots,x^{(k)}\}.
$$    
\end{lem}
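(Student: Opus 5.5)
Write $F=\Conv(\{x^{(1)},\dots,x^{(k)}\})$, which by hypothesis is a face of $L$, and set $x=\alpha_1x^{(1)}+\cdots+\alpha_kx^{(k)}\in F$. The plan is to use Proposition~\ref{pro:yleqxVsupp} to place every vertex of $\Vsupp(x)$ inside $F$. Then we use the fact that a vertex of $L$ lying in a face $F=\Conv(S)$ must belong to $S$.

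\textbf{Step 1.} Let $v\in\Vsupp(x)$, so $v$ is a vertex of $L$ with $v\preceq x$. By Lemma~\ref{lem:xleqythen} there are $0<\beta\le 1$ and $z\in L$ with $x=\beta v+(1-\beta)z$. If $\beta=1$ then $v=x\in F$. Otherwise $x$ is a proper convex combination of $v$ and $z$.

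\textbf{Step 2.} We show $v\in F$. Write $F=L\cap H$ with $H=\{c^\top y=\gamma\}$ supporting, so $c^\top y\le\gamma$ on $L$. Since $x\in F$,
\[
\gamma=c^\top x=\beta\, c^\top v+(1-\beta)\,c^\top z .
\]
Both $c^\top v\le\gamma$ and $c^\top z\le\gamma$, and $\beta>0$, so $c^\top v=\gamma$ and hence $v\in F$. Alternatively, $F$ is itself a polytope in standard form, namely $L$ intersected with the linear equation $c^\top y=\gamma$. Then $v\in\carr(x)\subseteq F$ by Proposition~\ref{pro:carrier-conv-vsupp}, using that the carrier face of a point of $F$ is contained in $F$. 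I would use the direct supporting-hyperplane argument since it is self-contained.

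\textbf{Step 3.} Now $v$ is a vertex of $L$ lying in $F=\Conv(\{x^{(1)},\dots,x^{(k)}\})$, so $v=\sum_j\lambda_jx^{(j)}$ with $\lambda_j\ge0$ and $\sum_j\lambda_j=1$. Since $v$ is extreme in $L$, it is extreme in $F$. Choose any $j$ with $\lambda_j>0$. If $\lambda_j<1$, write $v=\lambda_jx^{(j)}+(1-\lambda_j)w$ with $w\in F\subseteq L$. The vertex property then forces $x^{(j)}=v$. If $\lambda_j=1$, then $v=x^{(j)}$ directly. In either case $v\in\{x^{(1)},\dots,x^{(k)}\}$, which proves the inclusion.

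The only point needing care is Step 2, the passage from $v\preceq x$ to $v\in F$. This is routine once Lemma~\ref{lem:xleqythen} supplies the convex decomposition, so I expect no serious obstacle.
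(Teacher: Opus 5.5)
Your proof is correct, but it reaches the key inclusion $\Vsupp(x)\subseteq F$ by a more elementary route than the paper does.

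The paper first reduces to the case where all $\alpha_i>0$. It then identifies $F$ with the carrier face $\carr(x)$ and applies Proposition~\ref{pro:carrier-conv-vsupp}, namely $\Conv(\Vsupp(x))=\carr(x)$, which itself rests on an external reference. It concludes because the vertices of $F$ are exactly $x^{(1)},\dots,x^{(k)}$.

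You instead take the decomposition $x=\beta v+(1-\beta)z$ from Lemma~\ref{lem:xleqythen}, whose direct proof is self-contained. You then use the supporting-hyperplane inequality, i.e.\ the fact that a face is an extreme subset of $L$, to force $v\in F$. Finally you check by hand that a vertex of $L$ lying in $\Conv(\{x^{(1)},\dots,x^{(k)}\})$ must be one of the $x^{(j)}$.

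Your version needs no positivity of the $\alpha_i$, so it avoids the paper's ``without loss of generality'' step. Read literally, that step replaces the generating set by a subset that need not generate a face; what is really used there is only $\carr(x)\subseteq F$. The paper's version is shorter and gives the conceptual picture that $\Vsupp(x)$ spans the carrier face of $x$.

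One small point: in the paper's definition, the improper face $F=L$ is not required to come from a supporting hyperplane. In that case $v\in F$ is immediate, so treat it separately.
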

\begin{proof}
Let $F=\Conv(\{x^{(1)},\dots,x^{(k)}\})$ be the corresponding face of $L$. We may assume without loss of generality that $\alpha_i>0$ for all
$1\le i\le k$, since any index $i$ with $\alpha_i=0$ can be omitted from the
convex combination without changing the point
$\alpha_1 x^{(1)} + \cdots + \alpha_k x^{(k)}$. Under this assumption, we have 
$$
F=\carr(\alpha_1 x^{(1)} + \cdots + \alpha_k x^{(k)}).
$$
By Equation (\ref{eq:convVsupp=carr}) we have
$$
\Conv\left(\Vsupp(\alpha_1 x^{(1)} + \cdots + \alpha_k x^{(k)})\right)=\carr(\alpha_1 x^{(1)} + \cdots + \alpha_k x^{(k)}).
$$
Hence, if $x\in \Vsupp\!\left(\alpha_1 x^{(1)} + \cdots + \alpha_k x^{(k)}\right)$, then $x \in F$. Since the vertices of $F$ are precisely
$\{x^{(1)},\dots,x^{(k)}\}$, it follows that
$x \in \{x^{(1)},\dots,x^{(k)}\}$.
\end{proof}

\subsection{Vertices of fiber products of polytopes}
\label{subsec:fibpro}

%{In this section, we continue to work exclusively with polytopes in standard form. We adopt a categorical perspective that allows one to detect vertices of such constructions using affine and convex–geometric data associated with the supports of the components. This framework is used to analyze vertex structure via limits of polytopes, and is later specialized to the case of fiber products. This leads to a sufficient geometric condition for identifying their vertices, as well as partial converses that clarify the extent to which this condition is necessary.}

In this subsection, we study the vertex structure of inverse limits of polytopes. Using the preorder introduced above, we give a characterization of vertices. We then specialize to fiber products, where this characterization takes a more concrete form. This leads to a sufficient condition for identifying vertices of fiber products.

Convex polytopes and affine maps between them forms a category. Since our interest is polytopes in standard form, we will work with the full subcategory of $\catPoly$ whose objects are polytopes in standard form. All finite limits exist in this category as they are created in the category of sets.

\begin{defn}\label{def:limit-initial}
A \emph{finite diagram of polytopes} consists of 
a finite category $J$, and a functor 
\begin{equation}\label{eq:diagramchi}
\chi \colon J \longrightarrow \catPoly.
\end{equation} 
Let $J_0 \subseteq J$ denote the set of initial objects of $J$. The \emph{limit} of $\chi$, denoted by $\lim \chi$, is the subpolytope of the Cartesian product
$\prod_{j \in J_0} \chi(j)$
consisting of all tuples $({x}^{(j)})_{j \in J_0}$ such that for every 
$k \in J$ and every morphism $f \colon i \to k$ and $g\colon j \to k$ with $i,j \in J_0$, we have
\[
\chi(f)({x}^{(i)})\;=\; \chi(g)({x}^{(j)}).
\]
\end{defn}

%\begin{rem}
%The definition above is equivalent to the usual categorical definition of a limit, 
%since limits in $\catPoly_\ast$ are the same as in the category in convex sets, which created in $\mathbf{Set}$ (see  \cite[part (i) of Theorem 5.6.5.]{riehl2017category}). 
%In particular, specifying the coordinates of a point in $\lim \chi$ is equivalent 
%to specifying its components at the initial objects of $J$, because all other 
%components are determined uniquely by the morphisms.
%\end{rem}

Given a diagram $\chi$ in $\catPoly$ and a point $x \in \lim \chi$,
we construct a subdiagram of polytopes determined by the vertex supports.
\begin{defn}
Let \(\chi\) be a diagram of polytopes
% \(\catPoly_{\ast}\) as in (\ref{eq:diagramchi}), 
and let \({x}\) be a point in the limit of \(\chi\).
We define a new functor \(\chi_{{x}}\colon J \longrightarrow \catPoly\) 
%to be the diagram obtained from \(\chi\) by replacing each object \(\chi(j)\) in $\chi$ with
by
\[
\chi_x(j)= \Conv\bigl(\Vsupp(\pi^{\chi}_j ({x}))\bigr),
\]
where \(\pi^{\chi}_j \colon \lim \chi \to \chi(j)\) is the canonical projection.
% from the limit to the object \(\chi(j)\).
\end{defn}
%
%
%Given a diagram $\chi$ in $\catPoly_\ast$ as in (\ref{eq:diagramchi}). 
%For every initial object $j \in J_0$, let $\pi^{\chi}_j \colon \lim \chi \to \chi(j)$ be the canonical projection from the limit to the object \(\chi(j)\). Then

Corollary~\ref{cor:fVsupp} guarantees that 
%\(\chi_{{x}}\) 
this construction gives a well-defined functor.
%is well defined. 
Moreover, by Definition~\ref{def:limit-initial}, the limit of this new diagram can be written as
\begin{equation}\label{eq:limchix}
\lim \chi_{{x}} 
= \set{{y} \in \lim \chi :~ \pi^{\chi}_j({y}) \in \Conv\bigl(\Vsupp(\pi^{\chi}_j({x}))\bigr),\;\forall j\in J_0}.
\end{equation}
%
%

%\coc{better as a lemma, used in the theorem}

\begin{lem}\label{lem:lim=pre}
Let $\chi$ be a diagram in $\catPoly$ and let ${x} \in \lim \chi$. Then 
\[
\lim \chi_{{x}}=\set{{y} \in \lim \chi :\; {y} \preceq {x}}.
\]
\end{lem}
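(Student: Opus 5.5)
The plan is to prove the two inclusions separately, using the description of $\lim\chi_x$ in Equation~\eqref{eq:limchix} together with Proposition~\ref{pro:yleqxVsupp}. That proposition lets us replace the condition $\pi^{\chi}_j(y)\in\Conv(\Vsupp(\pi^{\chi}_j(x)))$ by $\pi^{\chi}_j(y)\preceq\pi^{\chi}_j(x)$. The lemma therefore reduces to the following claim: for $x,y\in\lim\chi$, we have $y\preceq x$ if and only if $\pi^{\chi}_j(y)\preceq\pi^{\chi}_j(x)$ for every initial object $j\in J_0$.

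We first fix the coordinates. The limit $\lim\chi$ sits inside $\prod_{j\in J_0}\chi(j)$, and the coordinates of this product are the disjoint union of the coordinates of the factors. The limit is again a polytope in standard form: it is cut out of the product by the nonnegativity constraints together with the linear equations defining each factor and the affine compatibility equations $\chi(f)(x^{(i)})=\chi(g)(x^{(j)})$. Its support is then simply
\[
\on{supp}(y)=\bigsqcup_{j\in J_0}\on{supp}\bigl(\pi^{\chi}_j(y)\bigr).
\]
Hence $\on{supp}(y)\subseteq\on{supp}(x)$ holds exactly when $\on{supp}(\pi^{\chi}_j(y))\subseteq\on{supp}(\pi^{\chi}_j(x))$ for all $j\in J_0$, which proves the claim. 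Both inclusions then follow at once from Equation~\eqref{eq:limchix} and Proposition~\ref{pro:yleqxVsupp}, applied in each factor $\chi(j)$.

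As a cross-check, or as an alternative if one prefers not to rely on the coordinate description, the inclusion ``$\supseteq$'' also follows from Corollary~\ref{cor:fVsupp}. Each projection $\pi^{\chi}_j$ is affine and therefore preserves $\preceq$. For the inclusion ``$\subseteq$'' the coordinate argument is the natural route.

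The main obstacle is making sure the preorder on $\lim\chi$ is the one induced by the ambient product coordinates. This requires $\lim\chi$ to be regarded as a polytope in standard form in $\prod_{j\in J_0}\chi(j)$, so that its support is the disjoint union of the factor supports. Once this convention, implicit in Definition~\ref{def:limit-initial}, is made explicit, everything else is formal. It also explains why only the initial objects $J_0$ appear: the components at non-initial objects are determined by those at $J_0$, so they contribute no independent support conditions.
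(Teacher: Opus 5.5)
Your proposal is correct and follows the paper's proof. The paper likewise combines Equation~\eqref{eq:limchix} with Proposition~\ref{pro:yleqxVsupp} to get the componentwise conditions $\pi^{\chi}_j(y)\preceq\pi^{\chi}_j(x)$ for all $j\in J_0$. It then notes that these conditions are equivalent to $y\preceq x$; your remark that the support in the ambient product is the disjoint union of the factor supports simply makes explicit what the paper leaves as ``by the definition of $\preceq$''.
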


\begin{proof}
%Suppose that $\lim \chi=\prod_{j \in J_0} \chi(j)$ and ${x}=({x}^{(j)})_{j \in J_0}$ as in 
%Definition \ref{def:limit-initial}. Take 
%\[
%{y}=({y}^{(j)})_{j \in J_0} \in \lim \chi_{{x}},
%\]
%which means that ${y}^{(j)} \in \Conv(\Vsupp({x}^{(j)}))$ for every $j\in J_0$. By Lemma \ref{lem:yleqxVsupp}, this condition is equivalent to requiring that 
By Equation (\ref{eq:limchix}) and Proposition \ref{pro:yleqxVsupp}, we have  
$$
\lim \chi_{{x}} = \set{{y} \in \lim \chi :\; \pi^{\chi}_j({y}) \preceq \pi^{\chi}_j({x}), \forall j\in J_0}.
$$
Then, by the definition of $\preceq$, the condition $\pi^{\chi}_j({y}) \preceq \pi^{\chi}_j({x}), \forall j\in J_0$ 
is equivalent to ${y}\preceq {x}$.
\end{proof}
%
%
%

%\coc{better to downgrade to a proposition}

%
\begin{pro}\label{pro:Mainresultpoly}
Let $\chi$ be a diagram in $\catPoly$. An element ${x}$ is a vertex of $\lim \chi$ if and only if 
\[
\lim \chi_{{x}}=\set{{x}}.
\]
\end{pro}
\begin{proof}
This follows directly from Corollary \ref{cor:vert=min} and Lemma \ref{lem:lim=pre}.   
\end{proof}

%\subsection{Vertices of fiber product of polytopes in standard form}
%
%
%\coc{section title removed}
%
%In this section, we investigate the geometric structure of fiber products of polytopes in standard form. 
%In this special case of what done in the previous subsection, our focus is on understanding how vertices of the fiber product arise from the interaction of convex hulls and affine relations induced by the maps to the base polytope.

Next, we specialize to the case of fiber products of polytopes in standard form. In this case, the limit of the diagram $\chi_x$ can be described more concretely as a fiber product of convex hulls of vertex supports. This allows us to give a more explicit characterization of the vertices in such fiber products.

% we able to give a sutifficint condition for being a vertex in such fiber products.
%This condition relies on the geometric properties of specific sets in the target polytope defined using the vertex support. 

\begin{defn}\label{def:fiberproduct}
Let $L^{(1)},\dots,L^{(n)}$ and $M$ be polytopes, 
%in standard form, 
and let
$$
f_i \colon L^{(i)} \longrightarrow M \qquad (i=1,\dots,n)
$$
be affine maps.  
The \emph{fiber product} of $L^{(1)},\dots,L^{(n)}$ over $M$, denoted by
$
L^{(1)} \times_M \cdots \times_M L^{(n)}
$,
is the limit of the diagram
\begin{equation}\label{eq:fibeq}
\begin{tikzcd}
L^{(1)} \arrow[drr,"f_1"'] 
& \cdots & L^{(i)} \arrow[d,"f_i"] 
& \cdots 
& L^{(n)} \arrow[dll,"f_n"] \\
&& M &&
\end{tikzcd}
\end{equation}
More concretely, the fiber product polytope is defined by
$$
L^{(1)} \times_M \cdots \times_M L^{(n)}
:= \set{(x^{(1)},\dots,x^{(n)}) \in L^{(1)} \times \cdots \times L^{(n)}
: \; f_1(x^{(1)})=\cdots=f_n(x^{(n)})}.
$$
\end{defn}
%
%
%\coc{immediate consequence of the theorem, better as a corollary}

\begin{cor}\label{cor:thetheoremforfibre}
A point $x=(x^{(1)},\dots,x^{(n)})$ in the fiber product
$L^{(1)} \times_M \cdots \times_M L^{(n)}$
is a vertex if and only if the set
\begin{equation}\label{eq:limchixfiber}
\Bigl\{(y^{(1)},\dots,y^{(n)}) \in \prod_{i=1}^n \Conv\!\bigl(\Vsupp(x^{(i)})\bigr)
:\; f_1(y^{(1)})=\cdots=f_n(y^{(n)})\Bigr\}
\end{equation}
consists only of $x$.
\end{cor}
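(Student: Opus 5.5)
This is an instance of Proposition~\ref{pro:Mainresultpoly}, so the plan is to realize the fiber product as the limit of a diagram $\chi$ and identify $\lim \chi_x$ with the set in \eqref{eq:limchixfiber}.

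\emph{Setting up the diagram.} Let $J$ be the finite category with objects $1,\dots,n$ and a terminal object $\ast$, together with one morphism $i\to\ast$ for each $i$ (plus identities). Define $\chi(i)=L^{(i)}$, $\chi(\ast)=M$ and $\chi(i\to\ast)=f_i$. The initial objects in the sense of Definition~\ref{def:limit-initial} are $J_0=\{1,\dots,n\}$. The only nontrivial compatibility conditions come from pairs of morphisms $i\to\ast$ and $j\to\ast$, and these say $f_i(x^{(i)})=f_j(x^{(j)})$. The conditions coming from identities are vacuous. Hence $\lim\chi$ is exactly $L^{(1)}\times_M\cdots\times_M L^{(n)}$ as in Definition~\ref{def:fiberproduct}. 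Its canonical projections $\pi^\chi_i$ send $x$ to $x^{(i)}$.

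\emph{Identifying $\lim\chi_x$.} By Equation~\eqref{eq:limchix}, $\lim\chi_x$ consists of the $y\in\lim\chi$ with $y^{(i)}\in\Conv(\Vsupp(x^{(i)}))$ for every $i\in J_0$. Written out, this is the set of tuples in $\prod_i\Conv(\Vsupp(x^{(i)}))$ satisfying $f_1(y^{(1)})=\cdots=f_n(y^{(n)})$, which is precisely \eqref{eq:limchixfiber}.

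The object $\chi_x(\ast)=\Conv(\Vsupp(f_i(x^{(i)})))$ imposes no extra constraint. This is because Corollary~\ref{cor:fVsupp} guarantees $f_i(y^{(i)})$ automatically lies in it. This is the only point needing care, namely that the description \eqref{eq:limchix} only involves the initial objects. Given the paper's Definition~\ref{def:limit-initial}, it is immediate.

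\emph{Conclusion.} Proposition~\ref{pro:Mainresultpoly} says $x$ is a vertex if and only if $\lim\chi_x=\{x\}$, which is exactly the claimed statement. There is no real obstacle beyond checking that the fiber product diagram fits the limit formalism. One also implicitly needs the fiber product to again be a polytope in standard form: it is cut out of the product of standard-form polytopes by the linear equations $f_i(x^{(i)})=f_j(x^{(j)})$, and it is bounded.
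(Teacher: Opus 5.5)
Your proposal is correct and follows the paper's proof: the paper also takes $\chi$ to be the diagram~\eqref{eq:fibeq}, uses Equation~\eqref{eq:limchix} to see that $\lim\chi_x$ is the set~\eqref{eq:limchixfiber}, and concludes by Proposition~\ref{pro:Mainresultpoly}. Your two extra checks are correct and make explicit what the paper leaves implicit: the component at $M$ adds no constraint by Corollary~\ref{cor:fVsupp}, and the fiber product is again a bounded standard-form polytope, so the preorder results apply.
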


\begin{proof}
Let $\chi$ denote the diagram in~\eqref{eq:fibeq}. Then the limit of 
$\chi_x$ is equal to the set in~\eqref{eq:limchixfiber}. The result therefore
follows from Proposition~\ref{pro:Mainresultpoly}.
\end{proof}

Now, we come to our main result in this section, which provides a sufficient condition for a point in the fiber product to be a vertex. This condition relies on the geometric properties of specific sets in the polytope $M$. We begin by recalling the definition of affine independence.

\begin{defn}\label{def:affinindep}
A finite set of points $\{v_1, \dots, v_k\} \subset \mathbb{R}^n$ is said to be \emph{affinely independent} 
if it satisfies one of the following equivalent conditions:
\begin{enumerate}
     \item If for $0 \leq \alpha_1,\dots,\alpha_k,\beta_1,\dots,\beta_k \leq 1$ with $\sum_{i=1}^k \alpha_i = \sum_{i=1}^k \beta_i=1$, we have
    \[
   \sum_{i=1}^k \alpha_i v_i = \sum_{i=1}^k \beta_i v_i, 
    \]
    then $\alpha_i=\beta_i$ for every $1\leq i \leq k$.
    \item The only real coefficients $\lambda_1, \dots, \lambda_k$ satisfying
    \[
    \sum_{i=1}^k \lambda_i v_i = 0
    \quad \text{and} \quad
    \sum_{i=1}^k \lambda_i = 0
    \]
    are $\lambda_0 = \lambda_1 = \cdots = \lambda_k = 0$. 
\end{enumerate}
\end{defn}
%

%\coc{upgraded to a theorem as our main result}

\begin{thm}\label{thm:VertofGluingmain}
Given the diagram~\eqref{eq:fibeq} and a point
$x=(x^{(1)},\dots,x^{(n)}) \in L^{(1)} \times_M \cdots \times_M L^{(n)}$,
define, for each $1 \leq i \leq n$, the set
\[
A_i := \{\, f_i(y) :\; y \in \Vsupp(x^{(i)}) \,\}.
\]
If the following conditions hold:
\begin{itemize}
    \item for every $1 \leq i \leq n$, the restriction of $f_i$ to $\Vsupp(x^{(i)})$ is injective (equivalently, $|A_i|=|\Vsupp(x^{(i)})|$), and the set $A_i$ is affinely independent;
    \item the intersection $\bigcap_{i=1}^n \Conv(A_i)$ consists of a single point;
\end{itemize}
then $x$ is a vertex.
\end{thm}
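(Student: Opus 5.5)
The plan is to verify the criterion of Corollary~\ref{cor:thetheoremforfibre}. Let $y=(y^{(1)},\dots,y^{(n)})$ lie in the set \eqref{eq:limchixfiber}, so $y^{(i)}\in\Conv(\Vsupp(x^{(i)}))$ for each $i$ and $f_1(y^{(1)})=\cdots=f_n(y^{(n)})=:m$. We aim to show $y=x$. Write $\Vsupp(x^{(i)})=\{v^{(i)}_1,\dots,v^{(i)}_{k_i}\}$. By Proposition~\ref{pro:yleqxVsupp}, $x^{(i)}$ itself lies in $\Conv(\Vsupp(x^{(i)}))$, since $x^{(i)}\preceq x^{(i)}$. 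Hence both $x^{(i)}$ and $y^{(i)}$ can be written as convex combinations
\[
x^{(i)}=\sum_{j}\alpha^{(i)}_j v^{(i)}_j,\qquad y^{(i)}=\sum_j \beta^{(i)}_j v^{(i)}_j .
\]

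First apply the affine maps. We get $f_i(x^{(i)})=\sum_j\alpha^{(i)}_j f_i(v^{(i)}_j)\in\Conv(A_i)$, and likewise $f_i(y^{(i)})\in\Conv(A_i)$. Since $x$ is in the fiber product, the common value $f_1(x^{(1)})=\cdots=f_n(x^{(n)})$ lies in $\bigcap_i\Conv(A_i)$. The point $m$ lies there as well. By the single-point hypothesis, $m=f_i(x^{(i)})$ for every $i$.

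Next use affine independence together with injectivity. Because $f_i$ is injective on $\Vsupp(x^{(i)})$, the points $f_i(v^{(i)}_1),\dots,f_i(v^{(i)}_{k_i})$ are distinct and are exactly the elements of $A_i$. We therefore have two convex representations of the same point of $\Conv(A_i)$:
\[
\sum_j\alpha^{(i)}_j f_i(v^{(i)}_j)=\sum_j\beta^{(i)}_j f_i(v^{(i)}_j).
\]
Condition~(1) of Definition~\ref{def:affinindep} then gives $\alpha^{(i)}_j=\beta^{(i)}_j$ for all $j$. It follows that $y^{(i)}=x^{(i)}$ for every $i$, so $y=x$, and Corollary~\ref{cor:thetheoremforfibre} shows that $x$ is a vertex.

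The only delicate point is the role of injectivity. Without it, coefficients attached to distinct vertices $v^{(i)}_j$ with the same image would merge in $A_i$, so affine independence of $A_i$ would no longer determine the $\beta^{(i)}_j$. With injectivity, the coefficients are indexed by $A_i$ itself, and the argument goes through. Note also that the vertices $v^{(i)}_j$ need not be affinely independent in $L^{(i)}$; this causes no difficulty, because uniqueness of the coefficients is established downstairs in $M$ and then transported back.
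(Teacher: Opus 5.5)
Your proof is correct and follows essentially the same route as the paper's: take a point of the set in Corollary~\ref{cor:thetheoremforfibre}, use the single-point hypothesis to identify its image under each $f_i$ with $f_i(x^{(i)})$, and then invoke injectivity and affine independence of $A_i$ to force equal coefficients and hence equal components. One small correction to your closing aside, which does not affect the proof: under these hypotheses $\Vsupp(x^{(i)})$ is automatically affinely independent, because any affine dependence among the $v^{(i)}_j$ pushes forward along the affine map $f_i$ to a nontrivial dependence among the distinct points of $A_i$.
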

\begin{proof}
Assume that $\tilde{x}=(\tilde{x}^{(1)},\dots,\tilde{x}^{(n)})$ belongs to the set in~\eqref{eq:limchixfiber}.
Fix $1 \leq j \leq n$, and suppose that
\begin{itemize}
    \item the elements of $\Vsupp(x^{(j)})$ are ${y^{(1)},\dots,y^{(k)}}$,
    \item $x^{(j)}=\alpha_1 y^{(1)}+\dots+\alpha_k y^{(k)}$,
    \item $\tilde{x}^{(j)}=\beta_1 y^{(1)}+\dots+\beta_k y^{(k)}$.
\end{itemize}

Since $\bigl|\bigcap_{i=1}^n \Conv A_i\bigr|=1$, applying $f_j$ yields
$$
\alpha_1 f_j(y^{(1)})+\dots+\alpha_k f_j(y^{(k)})
=
\beta_1 f_j(y^{(1)})+\dots+\beta_k f_j(y^{(k)}).
$$
Note that $f_j(y^{(1)}),\dots,f_j(y^{(k)})$ are distinct, since
$|A_j|=|\Vsupp(x^{(j)})|$. By the assumption that these points are affinely
independent, it follows that $\alpha_i=\beta_i$ for every $1\leq i \leq k$.
Hence $x^{(j)}=\tilde{x}^{(j)}$.
Since this holds for each $1 \leq j \leq n$, we conclude that $x=\tilde{x}$.
Therefore, by Corollary~\ref{cor:thetheoremforfibre}, the point $x$ is a vertex.
\end{proof}
%
%
%
%
%
%Now, we show what is needed to obtain a vertex 
%that satisfying the conditions of Proposition \ref{pro:VertofGluingmain}. 

Next, we show that the conditions of Theorem~\ref{thm:VertofGluingmain} can be satisfied by constructing sets of vertices {that generate faces} in the polytopes $L^{(i)}$.

%\begin{pro}\label{pro:VertexAimain}
%Given the diagram~\eqref{eq:fibeq} and sets $A_1,\dots,A_{n}$ such that:  
%
%\begin{itemize}
%    \item for every $1 \leq i \leq n$, the elements of $A_i$ are affinely independent,
%    \item the intersection $\cap_{i=1}^n \Conv A_i$ contains exactly one point,
%    \item for every $1 \leq i \leq n$, there is a generating face (Definition \ref{def:closeVert}) $\tilde{A}_i$ in $L^{(i)}$, such that 
%$f_i|_{\tilde{A}_i}$ induces a bijection from $\tilde{A_i}$ to $A_i$.
%\end{itemize}
%We define $x=(x^{(1)},\dots,x^{(n)}) \in L^{(1)} \times_M \cdots \times_M L^{(n)}$ by setting: 
%
%$$
%x^{(j)}=\sum_{y \in \tilde{A}_j }\alpha_{y} y.
%$$
%where $\alpha_{y}$ is the coefficient of $f_j(y)$ in the convex combination that represents the unique element in $\cap_{i=1}^n \Conv A_i$, expressed using the elements of $A_j$. Then $x$ is a vertex.
%\end{pro}

\begin{pro}\label{pro:VertexAimain}
Given the diagram~\eqref{eq:fibeq}, suppose we are given subsets $A_1,\dots,A_n\subseteq M $ such that:
\begin{itemize}
    \item for each $1 \le i \le n$, the set $A_i$ is affinely independent;
    \item the intersection $\bigcap_{i=1}^n \Conv(A_i)$ consists of exactly one point;
    \item for each $1 \le i \le n$, there exists a set $\tilde{A}_i$ {that generates a face} (Definition~\ref{def:closeVert}) in $L^{(i)}$ such that
    $f_i|_{\tilde{A}_i}$ induces a bijection $\tilde{A}_i \to A_i$.
\end{itemize}
Let $u$ denote the unique point in $\bigcap_{i=1}^n \Conv(A_i)$. For each $j$, express this point as a convex combination of elements of $A_j$,
\[
u=\sum_{z\in A_j}\alpha_z\, z,
\]
and define $x=(x^{(1)},\dots,x^{(n)}) \in L^{(1)} \times_M \cdots \times_M L^{(n)}$ by
\[
x^{(j)}=\sum_{y\in \tilde{A}_j}\alpha_{f_j(y)}\, y.
\]
Then $x$ is a vertex of $L^{(1)} \times_M \cdots \times_M L^{(n)}$.
\end{pro}

\begin{proof}
First, we show $x$ is a well-defined point of the fiber product. Since $f_j|_{\tilde{A}_j}$ induces a bijection from $\tilde{A_j}\to A_j$, we get
$$
f_j(x^{(j)})=f_j(\sum_{y \in \tilde{A}_j }\alpha_{y} y )=\sum_{y \in \tilde{A}_j }\alpha_{y} f_j(y)=\sum_{z \in A_j }\alpha_{z} z=u.
$$
%which equals the unique point in $\cap_{i=1}^n \Conv A_i$. 
So, $x$ is well defined. 
Since $\tilde{A_j}$ {generates a face}, using Lemma \ref{lem:genface} we obtain 
$$
\Vsupp(x^{(j)})=\Vsupp(\sum_{y \in \tilde{A}_j }\alpha_{y} y) \subseteq \tilde{A_j}.
$$
Then
$$
A'_j:=\set{f_j(y): \; y \in \Vsupp(x^{(j)})}\subseteq \set{f_j(y): \; y \in \tilde{A_j}} =A_j. 
$$
We conclude that the set $A'_j$ is affinely independent, $|A'_j|=|\Vsupp(p|_{X^{(j)}})|$, and 
\[
\bigcap_{i=1}^n \Conv A'_i \subseteq \bigcap_{i=1}^n \Conv A_i.\]
On the other hand, it is clear that the unique point $u$ of 
$\bigcap_{i=1}^n \Conv A_i$ is contained in  $\bigcap_{i=1}^n \Conv A'_i$. 
By Theorem \ref{thm:VertofGluingmain}, we obtain that $p$ is a vertex.
\end{proof}
The converse of Theorem \ref{thm:VertofGluingmain} does not hold in general; see Examples \ref{ex:biparcounterex} and \ref{ex:counterex} below. Nevertheless, we establish two partial converses.
% results.
%
%
%

\begin{pro}\label{pro:affinXimpliesaffinWmain}
With the notation of Theorem \ref{thm:VertofGluingmain}, suppose that $x$ is a vertex and that,
for some $1\le j\le n$, the set $\Vsupp(x^{(j)})$ is affinely independent.
Then $|A_j|=|\Vsupp(x^{(j)})|$, and the set $A_j$ is also affinely independent.
\end{pro}

\begin{proof}
Let $\Vsupp(x^{(j)})=\set{y^{(1)},\dots,y^{(k)}}$.  
Since this set is affinely independent, there exist unique coefficients 
$0 < \alpha_1, \dots, \alpha_k < 1$ such that 
$$
x^{(j)} = \alpha_1 y^{(1)} + \dots + \alpha_k y^{(k)}.
$$
Suppose, for a contradiction, that the points 
$f_j(y^{(1)}),\dots,f_j(y^{(k)})$ are affinely dependent (in particular, this includes the case where some of them coincides, meaning 
$|A_j|\neq|\Vsupp(x^{(j)})|$).  
Then, by part~(2) of Definition~\ref{def:affinindep}, there exist real numbers 
$\lambda_1, \dots, \lambda_k$, not all zero, such that
\[
\sum_{i=1}^k \lambda_i f_j(y^{(i)}) = {0},
\qquad 
\sum_{i=1}^k \lambda_i = 0.
\]
Choose $\epsilon > 0$ small enough so that $\alpha_i + \epsilon \lambda_i > 0$ for every 
$1 \leq i \leq k$.  
Define $\beta_i = \alpha_i + \epsilon \lambda_i$.  
Then
 \[\displaystyle \sum_{i=1}^k \beta_i 
    = \sum_{i=1}^k (\alpha_i + \epsilon \lambda_i) 
    = \sum_{i=1}^k \alpha_i + \epsilon \sum_{i=1}^k \lambda_i 
    = 1 + \epsilon \cdot 0 = 1\]
    and
   \[ f_j(\displaystyle \sum_{i=1}^k \beta_i y^{(i)}) 
    = \sum_{i=1}^k \alpha_i  f_j(y^{(i)}) + 
    \epsilon \sum_{i=1}^k \lambda_i  f_j(y^{(i)}) 
    = f_j(\sum_{i=1}^k \alpha_i  y^{(i)}) + \epsilon \cdot \vec{0} 
    = f_j(x^{(j)}).\]
Thus, the tuple
\[
(x^{(1)}, \dots, x^{(j-1)}, 
\textstyle\sum_{i=1}^k \beta_i y^{(i)}, 
x^{(j+1)}, \dots, x^{(n)})
\]
lies in the set defined in~\eqref{eq:limchixfiber}.  
Since the set $\{y^{(1)}, \dots, y^{(k)}\}$ is affinely independent, we have 
$\sum_{i=1}^k \beta_i y^{(i)} \neq x^{(j)}$,  
which contradicts Corollary~\ref{cor:thetheoremforfibre}.  
Hence, $|A_j|=|\Vsupp(x^{(j)})|$ and $A_j$ must be affinely independent.
\end{proof}

%\begin{pro}\label{pro:VertofGluihalfsecondirmain}
%With the same notation as in Proposition~\ref{pro:VertofGluingmain}, if $x$ is a vertex, then 
%the intersection $\bigcap_{i=1}^n \Conv(A_i)$ contains exactly one point. {In addition, for every $1 \leq j \leq n$, we can write an 
%affine combination for the common point in $\bigcap_{i=1}^n \Conv(A_i)$ by the elements of $A_j$ such that 
%the coefficient of every $x \in A_j$ is nonzero.}
%\end{pro}

\begin{pro}\label{pro:VertofGluihalfsecondirmain}
With the notation of Theorem \ref{thm:VertofGluingmain}, if $x$ is a vertex, then
the intersection $\bigcap_{i=1}^n \Conv(A_i)$ consists of exactly one point $u$.
{Moreover, for every $1 \le j \le n$, the unique point $u$ admits an affine
decomposition in terms of the elements of $A_j$ in which all coefficients are nonzero.}
\end{pro}

\begin{proof}
Given $1 \leq j \leq n$ and $y \in \Vsupp(x^{(j)})$, by Lemma~\ref{lem:xleqythen} there exist
$0<\alpha\leq 1$ and $ y' \in L^{(j)}$ such that
\begin{equation}\label{eq:p|Xqands}
x^{(j)}=\alpha y + (1-\alpha)y' .
\end{equation}
If $\alpha<1$, then 
by 
%the converse direction of 
Lemma~\ref{lem:xleqythen}, we have $y' \preceq x^{(j)}$.
Hence, by Proposition~\ref{pro:yleqxVsupp}, $y'$ can be written as a convex combination of vertices in
$\Vsupp(x^{(j)})$.
Combining this with \eqref{eq:p|Xqands}, we can express $x^{(j)}$ as a convex combination of vertices in
$\Vsupp(x^{(j)})$ in which the coefficient of $y$ is nonzero. This also the case if $\alpha=1$ ($y=x^{(j)}$). Denote such a representation by $T_y$.
Averaging over all $y \in \Vsupp(x^{(j)})$, we obtain
$$
x^{(j)}=\sum_{y \in \Vsupp(x^{(j)})} \frac{1}{|\Vsupp(x^{(j)})|} T_y.
$$
Therefore, there exist coefficients $0<\alpha_y \leq 1$ for
$y \in \Vsupp(x^{(j)})$ such that
$$
x^{(j)}=\sum_{y \in \Vsupp(x^{(j)})} \alpha_y y \; \;\text{and} \;\; \sum_{y \in \Vsupp(x^{(j)})} \alpha_y =1 .
$$
As a result, we have the following 
%common 
point in $\bigcap_{i=1}^n \Conv(A_i)$:
$$
\tilde{z}=f_j(x^{(j)})= \sum_{y \in \Vsupp(x^{(j)})} \alpha_y f_j(y).
$$  
If there were another element $z \in \bigcap_{i=1}^n \Conv(A_i)$, then for each $1\leq j \leq n$ and $y \in \Vsupp(x^{(j)})$, 
there exist coefficients $\alpha_y^{(j)}$ such that  
$$
z = \sum_{y\in \Vsupp(x^{(j)})} \alpha_y^{(j)} \, f_j(y).
$$
Hence
$$
\biggl(\, \sum_{y\in \Vsupp(x^{(1)})} \alpha_y^{(1)} \,y, \dots,  \sum_{y\in \Vsupp(x^{(n)})} \alpha_y^{(n)} \, y \biggr) 
\in \prod_{i=1}^n \Conv\bigl(\Vsupp(x^{(i)})\bigr).
$$
In addition, for each $1\leq j \leq n$, we have
$$
f_j(\sum_{y\in \Vsupp(x^{(j)})} \alpha_y^{(j)} \, y ) =\sum_{y\in \Vsupp(x^{(j)})} \alpha_y^{(j)} \, f_j(y) =z, 
$$
and since $ z \neq  \tilde{z} = f_j(x^{(j)})$ we obtain that  $\sum_{y\in \Vsupp(x^{(j)})} \alpha_y^{(j)} \, y  \neq x^{(j)}$.
This contradicts Corollary~\ref{cor:thetheoremforfibre}.  
Therefore, the intersection must contain exactly one point.
\end{proof}

\section{Extremal simplicial distributions}\label{sec:Exsimpldist}

Simplicial distributions are central to the study of contextuality, nonlocality, and related phenomena in quantum foundations. In this section, we briefly review simplicial distributions and show that they form a polytope in standard form. This allows us to apply the results of Section~\ref{subsec:fibpro} to characterize extremal simplicial distributions on measurement spaces arising as colimits of simplicial sets, in particular those obtained by gluing spaces along a common subspace.

\subsection{Simplicial distributions}\label{sec:Simdist}

%In this section, we introduce the notion of simplicial distributions. To do so, we first recall the concept of a simplicial set, which provides the underlying combinatorial structure on which these distributions are defined.

%Our main object of study in this paper is the polytopes that arise from simplicial distributions. 
We begin by introducing the preliminaries from the theory of simplicial sets. 

\begin{defn}
A \emph{simplicial set} consists of a sequence of sets
\[
X_0, X_1, X_2, \dots
\]
together with maps called \emph{face maps}
\[
d_i=d^X_i \colon X_n \to X_{n-1} \qquad \text{for } 0 \leq i \leq n,
\]
and \emph{degeneracy maps}
\[
s_j =s^X_j \colon X_n \to X_{n+1} \qquad \text{for } 0 \leq j \leq n,
\]
for each $n \geq 0$, satisfying the \emph{simplicial identities}.
%following simplicial identities for all indices where the expressions make sense:
%\begin{align*}
%d_i d_j &= d_{j-1} d_i \quad \text{if } i < j, \\
%s_i s_j &= s_{j+1} s_i \quad \text{if } i \leq j, \\
%d_i s_j &=
%\begin{cases}
%s_{j-1} d_i & \text{if } i < j, \\
%\text{id} & \text{if } i = j \text{ or } i = j+1, \\
%s_j d_{i-1} & \text{if } i > j+1.
%\end{cases}
%\end{align*}
The elements of $X_n$ are called the \emph{$n$-simplices}. A simplex is called \emph{degenerate} if it lies in the image of a degeneracy map. Otherwise, it is called \emph{non-degenerate}.
A non-degenerate simplex is called a \emph{generating simplex} if it is not in the image of a face map.
%a face of another simplex.

Given simplicial sets $X,Y$, \emph{a simplicial set map} $f\colon X \to Y$ is a collection of set maps $\{f_n\colon X_n \to Y_n\}_{n\geq 0}$ such that 
$$
f_{n-1} \circ d_i =d_i \circ f_n, \quad f_{n+1} \circ s_i =s_i \circ f_n.
$$
For an $n$-simplex $x$, we will write $f_x$ instead of $f_n(x)$. The \emph{category of simplicial sets} will be denoted by $\catsSet$, and so the set of simplicial maps from $X$ to $Y$ will be denoted by $\catsSet(X,Y)$.
\end{defn}

The simplicial identities omitted from the definition above can be found in \cite{friedman2008elementary}.
We will not need their explicit form here. Intuitively, the face maps encode how simplices
in each dimension are glued together, while the degeneracy maps indicate which simplices
are regarded as degenerate (and hence collapsed).

%The face maps describe how to extract the $(n-1)$-dimensional "faces" of an $n$-simplex, while the degeneracy maps represent simplices that arise from inserting repetitions (i.e., degenerate simplices).

%Intuitively, a simplicial set encodes a combinatorial structure made of vertices (0-simplices), edges (1-simplices), triangles (2-simplices), and higher-dimensional analogues, together with consistent ways to "glue" them along faces.

%Important examples of simplicial sets that will be used in this paper: 
%\begin{itemize}
%    \item The standard simplex $\Delta^n$, which is the simplicial set generated by one nondegenerate $n$-simplex.  
%    Concretely, it has $n+1$ vertices, and every face of the $n$-simplex gives a nondegenerate simplex in lower dimension.
%    \item A simplicial set consisting of a finite set of nodes is one in which, for every $n \geq 1$, all $n$-simplices are degenerate.  
%In this case, we will simply identify the simplicial set with its set of nodes.
%    \item A connected directed graph, which can be represented as a connected $1$-dimensional simplicial set. It is the case when for every $n \geq 2$, all the $n$-simplices are degenerate simplices. 
%    \item For $m \geq 2$, the simplicial set $\Delta_{\ZZ_m}$, which defined by setting $(\Delta_{\ZZ_m})_n=\ZZ_m^{n+1}$, where the face and degeneracy maps are given by deletion and repetition, respectively.
%\end{itemize}

\begin{ex}\label{ex:simplsets}
We list the main examples of simplicial sets that will appear in this paper.
\begin{enumerate}
\item
Let $[n]$ denote the ordered set $\{0,1,\dots,n\}$. The \emph{$k$-simplex} is the simplicial set,
denoted by $\Delta^k$, whose $n$-simplices are order-preserving functions $\sigma \colon [n]\to [k]$.
For $0\le i\le n$, let
\[
d^i\colon[n-1]\to [n]
\]
be the injective order-preserving map whose image is $[n]\setminus\{i\}$ (equivalently, it ``skips'' $i$).
Then the $i$th face map is given by precomposition,
\[
d_i(\sigma)=\sigma\circ d^i.
\]
For $0\le j\le n$, let
\[
s^j\colon [n+1]\to [n]
\]
be the surjective order-preserving map that identifies $j$ and $j+1$ (equivalently, $s^j(j)=s^j(j+1)=j$
and $s^j$ is strictly increasing elsewhere). Then the $j$th degeneracy map is
\[
s_j(\sigma)=\sigma\circ s^j.
\]
This simplicial set models the topological $k$-simplex with vertices $0,1,\dots,k$.

\item
A \emph{one-dimensional simplicial set} amounts to a {directed} (multi)graph $X=(X_0,X_1,d_0,d_1)$ together with
a specified loop $s_0\colon X_0\to X_1$ at each vertex, satisfying $d_i\circ s_0=\on{id}_{X_0}$ for $i=0,1$.
Note that such a simplicial set can be constructed by gluing a collection of $1$-simplices $\Delta^1$
in the form of a colimit. In fact, this observation generalizes to arbitrary simplicial sets:
any simplicial set can be expressed as a colimit of its simplices. Later in this paper we will consider
special types of colimits obtained by gluing a collection of simplicial sets along a common simplicial subset.

%\coc{in the item below, saying that this construction is related to the standard simplex makes it more acceptable, so it is not coming from nowhere}

\item 
For a set $U$, let $\Delta_U$ denote the simplicial set defined by
$
(\Delta_U)_n = U^{n+1}$,
with face and degeneracy maps given by deletion and repetition:
\[
d_i(u_0,\dots,u_n)=(u_0,\dots,\widehat{u_i},\dots,u_n), \qquad 0\le i\le n,
\]
and
\[
s_j(u_0,\dots,u_n)=(u_0,\dots,u_j,u_j,\dots,u_n), \qquad 0\le j\le n.
\]
When $U=[m-1]=\{0,\dots,m-1\}$, the simplicial set $\Delta_{[m-1]}$ may be viewed as an unoriented version of the standard simplex $\Delta^{m-1}$, since its simplices are all functions $[n]\to [m-1]$, without the order-preserving requirement. It is convenient to identify $[m-1]$ with $\ZZ_m$. In this paper, we will specialize to the outcome space
$\Delta_{\ZZ_m}$,
where $m\ge 2$
%. This simplicial set plays the role of the outcome space in the theory of simplicial distributions 
(see Remark~\ref{rem:simdistmeaning}).

%{For $m \geq 2$, the simplicial set $\Delta_{\ZZ_m}$, which defined by setting $(\Delta_{\ZZ_m})_n=\ZZ_m^{n+1}$, where the face and degeneracy maps are given by deletion and repetition, respectively. This simplicial set typically plays the role of the outcome space in the theory of simplicial distributions 
%(see Remark \ref{rem:simdistmeaning}).
%}
%There is an unordered version of the standard simplex. Let $\Upsilon^m$ denote the simplicial set,
%called the \emph{unoriented $m$-simplex}, whose $n$-simplices are \emph{all} functions $\sigma\colon [n]\to [m]$
%(with no order-preserving requirement). The face and degeneracy maps are still given by the same
%precomposition formulas
%\[
%d_i(\sigma)=\sigma\circ d^i, \qquad s_j(\sigma)=\sigma\circ s^j,
%\]
%using the maps $d^i$ and $s^j$ defined above. The standard simplex $\Delta^m$ is a simplicial subset of
%$\Upsilon^m$.
\end{enumerate}
\end{ex}

\begin{defn}\label{ref:distribution monad} 
For a set $X$, the set $D(X)$ of probability distributions over $X$ is defined by
\[
D(X) = \left\{p\colon X \to \RR_{\geq0} :\; \sum_{x\in X} p(x) = 1 \right\}.
\]
For a map $f\colon X\to Y$, the induced map $D(f) \colon D(X) \to D(Y)$ is defined by:
    \[
    D(f)(p)(y) = \sum_{x:\; f(x)=y}  p(x).
    \]
This assignment can be extended to a functor $D\colon \catSet \to \catSet$, called the \emph{distribution functor}.
\end{defn}

The distribution functor can be employed with the structure of a monad. Within this framework, algebras in the category of sets are precisely the convex sets. This structure can be carried over to simplicial sets. Let $X$ be a simplicial set. The distribution monad can be applied levelwise to define a new simplicial set $D(X)$ by:
    \[
    D(X)_n := D(X_n),
    \]
    where the face and degeneracy maps of $D(X)$ are defined by applying $D$ to the corresponding structure maps of $X$:
    \[
    d_i^{D(X)} := D(d_i^{X}) \quad \text{and} \quad s_j^{D(X)} := D(s_j^{X}).
    \]
The action on simplicial set maps is also defined levelwise. In this way, $D$ defines a functor on the category of simplicial sets, $D\colon \catsSet \to \catsSet$, {see} \cite[{Proposition 2.6}]{kharoof2022simplicial}.

We are now ready to introduce simplicial distributions \cite{okay2022simplicial}.

\begin{defn}\label{def:simpDistDeter} 
Let $X$ and $Y$ be simplicial sets. A \emph{simplicial distribution} on $(X,Y)$ is a simplicial set map
\[
p \colon X \to D(Y).
\]
We will write $\sDist(X,Y)$ for the set of simplicial distributions on $(X,Y)$. A simplicial distribution is called \emph{deterministic} if, for every $n$-simplex $x \in X_n$, 
the distribution $p_x\in D(Y_n)$ satisfies $p_x^{y} = 1$ for some $y \in Y_n$ and is zero for all others.
\end{defn}

Explicitly, a simplicial distribution assigns to each simplex $x \in X_n$ a probability distribution $p_x$ over the $n$-simplices of $Y$, 
in a way that is compatible with restriction along the face maps.  When $X$ is a finitely generated simplicial set (i.e., with finitely many generating simplices) and $Y$ has finitely many simplices in each dimension, then $\sDist(X,Y)$ is a polytope in standard form.

\begin{rem}\label{rem:simdistmeaning}
Simplicial distributions provide a natural framework for modeling measurement statistics in quantum mechanics. In this setting, $X$ represents a measurement {space} (the space of measurements), while $Y$ represents the outcome space. Both are equipped with the structure of simplicial sets, where each measurement and outcome corresponds to a simplex of appropriate dimension. Specifically, for a measurement $x \in X_n$, the probability $p_x^y \in [0,1]$ represents the probability of observing outcome $y \in Y_n$. The simplicial structure of $X$ encodes the compatibility relations between measurements—a fundamental aspect of quantum mechanics arising from the non-commutativity of observables. In quantum experiments, we only have access to joint measurement statistics for compatible measurements. These compatible sets are precisely the simplices of $X$, and the corresponding probability distributions form the simplicial distribution $p$.
\end{rem}

\begin{pro}\label{pro:simdistspecialpoly}
Given a pair $(X,Y)$ of simplicial sets where $X$ is finitely generated and $Y$ has finitely many simplices in each dimension, the set $\sDist(X, Y)$ is a polytope in standard form.
\end{pro}

\begin{proof}
Suppose that $X$ is generated by finitely many simplices
$\sigma_1 \in X_{n_1}, \dots, \sigma_k \in X_{n_k}$.
A simplicial distribution $p\colon X\to D(Y)$ is uniquely determined by the tuple
$(p_{\sigma_1},\dots,p_{\sigma_k})$, where each $p_{\sigma_j}$ is a probability
distribution on the finite set $Y_{n_j}$.
Hence $\sDist(X,Y)$ can be identified with a subset of $\RR^{N}$, where
\[
N=\sum_{j=1}^k |Y_{n_j}|,
\]
consisting of vectors $p=(p_i)$ satisfying:
\begin{enumerate}
\item \emph{Nonnegativity:} $p_i\ge 0$ for all $1\le i\le N$;
\item \emph{Normalization:} for each generator $\sigma_j$,
\[
\sum_{y\in Y_{n_j}} p_{\sigma_j}(y)=1;
\]
\item \emph{Compatibility (non-signaling):} the simplicial identities for the face
maps impose linear equalities requiring that the restrictions of
$p_{\sigma_j}$ to common faces agree.
\end{enumerate}
Conditions (2) and (3) are linear equalities, so they can be written as
$Ap=b$ for a suitable matrix $A$ and vector $b$. Condition (1) is exactly
$p\in \RR^{N}_{\ge 0}$.

Finally, $\sDist(X,Y)$ is bounded: each block $p_{\sigma_j}$ lies in a standard
simplex (nonnegative entries summing to $1$), so $\sDist(X,Y)\subseteq
\Delta^{|Y_{n_1}|-1}\times\cdots\times \Delta^{|Y_{n_k}|-1}$.
Therefore $\sDist(X,Y)=\{p\in \RR^{N}:\; Ap=b,\; p\ge 0\}$ is a polytope in
standard form.
\end{proof}

\begin{ex}\label{ex:one dim sdist}
A particularly interesting case arises when we restrict $X$ to be a one-dimensional simplicial set. As seen in earlier examples, the data of a connected one-dimensional simplicial set corresponds to a directed (multi)graph $X=(X_0,X_1,d_0,d_1,s_0)$, where each vertex may have self-loops. A simplicial distribution $p\colon X\to D({\Delta_{\ZZ_m}})$ consists of probability distributions $p_x\in D(\ZZ_m^2)$ for each generating simplex $x$, subject to compatibility conditions imposed by the face maps. These compatibility conditions correspond precisely to the row and column sum conditions on the probability matrices: for each edge $x$, we require that the row sums match the probabilities at $d_1(x)$ and the column sums match those at $d_0(x)$. 
For degenerate self-loops at a vertex $v\in X_0$, namely $s_0(v)$, the distribution is determined by the degeneracy map and is given by a diagonal matrix whose diagonal entries are $p_v^{a,a}$ for $a\in \ZZ_m$, while all off-diagonal entries are zero. %\ak{Here, "self-loops" could be misread as non-degenerate loops, which would not be forced to be diagonal.}. 
This structure yields a canonical bijection
\[
\on{Dist}(X,m) \cong \sDist(X,{\Delta_{\ZZ_m}}).
\]
\end{ex}

In the framework of simplicial distributions, contextuality is characterized by the inability to express a distribution as a convex combination of deterministic distributions.

\begin{defn}\label{def:contextualll}
A simplicial distribution $p \colon X \to D(Y)$ is called \emph{non-contextual} if it can be written as a convex combination of deterministic distributions.  
If no such representation exists, we say that $p$ is \emph{contextual}.
\end{defn}

\begin{rem}\label{rem:Bellineq}
The polytope $\sDist(X,Y)$ contains two types of vertices: deterministic distributions (see \cite[Proposition 4.16]{kharoof2022simplicial}) and contextual vertices. The convex hull of all deterministic simplicial distributions forms a subpolytope known as the \emph{Bell polytope}. Contextual simplicial distributions lie outside this polytope and can be detected via Bell inequalities—linear inequalities satisfied by all points in the Bell polytope. The contextual vertices are precisely those simplicial distributions that violate at least one Bell inequality.
\end{rem}

%

%\begin{ex}\label{ex:treenoncont}
%Any tree can be regarded as a simplicial set $X$, obtained by successively gluing edges along common nodes:  
%first gluing two edges at a node, then gluing the resulting space with another edge at some node, and so on.  
%Since every simplicial distribution on $\Delta^1$ is noncontextual (\cite[Example~3.11]{okay2022simplicial}), and by \cite[Corollary~4.6]{okay2022simplicial}, it follows that every simplicial distribution on $(X,Y)$ is noncontextual.
%\end{ex}
%
%
%

%We now give an example of scenarios that admit only noncontextual simplicial distributions; consequently, all vertices are deterministic distributions.

%\begin{ex}\label{ex:treevert}
%If $X$ is a tree, then it can be constructed inductively by starting from 
%$\Delta^1$ and repeatedly gluing copies of 
%$\Delta^1$ along $\Delta^0$. By \cite[Example~3.11 and Corollary~4.6]{okay2022simplicial}, every simplicial distribution in 
%$\sDist(X,Y)$ is noncontextual. Therefore, every vertex of
%$\sDist(X,Y)$ is a deterministic distribution.
%\end{ex}

%\begin{cihan}{orange}{To be moved to a later section just before collapsing is used for the first time.}
%\subsection{The collapsing method}\label{subsec:collapsing}
%

%\coc{this section can appear later after main results when we use it}
%\ak{I prefer to keep it here, since it is used in several places later on (Subsection~\ref{sec:gluingthrtwoedges} and Section~\ref{sec:1dim}). Moreover, this has become a classical technique for extermal simplicial distributions, so it naturally fits in this section.}

%\end{cihan}

\subsection{Gluing measurement spaces}
\label{sec:Glumeas}

In this subsection, we prove our main result on the characterization of vertices of simplicial distributions on measurement spaces obtained by gluing spaces along a common subspace. The argument relies on the corresponding fiber product result, Theorem \ref{thm:VertofGluingmain}, established in the previous section.

% Proposition \ref{pro:VertofGluingmain} on the characterization of vertices of fiber products of polytopes. Our result relies on an analysis of vertex structure of arbitrary inverse limits (Theorem~\ref{thm:Mainresultpoly}).  

%In this section, we generalize \cite[Theorem 5.3]{kharoof2024extremal} by giving a general characterization of vertices for simplicial distributions defined on colimits of finite diagrams of simplicial sets. The argument relies on Theorem~\ref{thm:Mainresultpoly} together with the fact that the functor $\sDist(-,Y)$ sends colimits of simplicial sets to limits of polytopes. 

%We then specialize this characterization to the case in which the measurement space is obtained by gluing several simplicial sets along a common subspace. In this setting, we derive a concrete geometric sufficient condition for a simplicial distribution to be a vertex and establish partial converse under additional assumptions. 

We begin by recalling the notion of colimits in the category of simplicial sets.

%\subsection{The general case}
%

%

\begin{defn}\label{def:colim}
Let $I$ be a finite category, and let 
\begin{equation}\label{eq:diagrammathcalF}
\mathcal{F} \colon I \longrightarrow \catsSet
\end{equation}
be a diagram of simplicial sets. 
The \emph{colimit of $\mathcal{F}$}, denoted by $\colim \mathcal{F}$, is the simplicial set whose $n$-simplices are given by
\[
(\colim \mathcal{F})_n
:=
\left(\bigsqcup_{i\in \mathrm{Ob}(I)} \mathcal{F}(i)_n\right)\Big/\sim,
\]
for every $n\geq 0$, where $\sim$ is the equivalence relation generated by
\[
x \sim \mathcal{F}(f)_n(x), \quad \text{for all morphisms } f \colon i \to j \text{ in } I \text{ and } x \in \mathcal{F}(i)_n.
\]
Explicitly, the face and degeneracy maps on $\colim \mathcal{F}$ are induced from those of the $\mathcal{F}(i)$ and are well-defined because each $\mathcal{F}(f)$ is a simplicial map.
\end{defn}

%\begin{cihan}{purple}{add this phrase to the intro of the section as motivation}
%The following Theorem generalizes \cite[Theorem 5.3]{kharoof2024extremal}.

%\end{cihan}

Simplicial distributions can be pushed forward along simplicial maps, as follows.

\begin{defn}\label{def:pushforward} 
Given a simplicial set map $f\colon X \to X'$, we define the map 
    $$
    f^\ast \colon \sDist(X',Y) \to \sDist(X,Y),
    $$
by sending $p\colon X' \to D(Y)$ to the composite  $ p \circ f\colon X \to D(Y)$.   
\end{defn}
{The map $f^{\ast}$ preserves convex combinations.}

For limits that come from colimits of simplicial sets, we have the following vertex characterization result, which is a direct consequence of Proposition~\ref{pro:Mainresultpoly}.

\begin{thm}\label{thm:GenGluing}
Let $I$ be a finite category, and let 
\begin{equation}\label{eq:diagramchi}
\mathcal{F} \colon I \longrightarrow \catsSet
\end{equation}
be a diagram of simplicial sets.  
Let $I_0 \subseteq I$ denote the set of terminal objects of $I$, and for each $i \in I_0$, let 
$s_i\colon \mathcal{F}(i) \to \colim \mathcal{F}$ denote the structure map.  
Given a simplicial set $Y$, a simplicial distribution 
$p \colon \colim \mathcal{F} \to D(Y)$ is a vertex if and only if it is the unique simplicial distribution 
$q\colon \colim \mathcal{F} \to D(Y)$ such that 
\[
(s_i^{\ast}(q))_{i\in I_0} \in \prod_{i \in I_0} \Conv\!\bigl(\Vsupp(s_i^{\ast}(p))\bigr).
\]
\end{thm}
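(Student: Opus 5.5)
The plan is to identify $\sDist(\colim\mathcal{F},Y)$ with the limit of a diagram of polytopes in standard form and then apply Proposition~\ref{pro:Mainresultpoly}.

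\emph{Step 1: the limit identification.} Consider the functor $\chi\colon I^{\mathrm{op}}\to\catPoly$ given by $\chi(i)=\sDist(\mathcal{F}(i),Y)$ and $\chi(f)=\mathcal{F}(f)^\ast$. Each value is a polytope in standard form by Proposition~\ref{pro:simdistspecialpoly}, assuming the $\mathcal{F}(i)$ are finitely generated and $Y$ is finite in each degree, as implicitly required. Each $\mathcal{F}(f)^\ast$ is affine, since it preserves convex combinations. Terminal objects of $I$ become initial objects of $I^{\mathrm{op}}$, so $I_0$ plays the role of $J_0$ in Definition~\ref{def:limit-initial}. The key point is that $\catsSet(-,D(Y))$ sends colimits to limits of sets. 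Hence the map
\[
\Phi\colon \sDist(\colim\mathcal{F},Y)\longrightarrow \lim\chi,\qquad q\mapsto (s_i^\ast q)_{i\in I_0},
\]
is a bijection. It is also affine and is the restriction of a linear map of coordinates, so it is an affine isomorphism of polytopes. It therefore preserves vertices, and the projections satisfy $\pi_i^\chi\circ\Phi=s_i^\ast$.

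\emph{Step 2: the main obstacle.} The delicate part is justifying that the limit of Definition~\ref{def:limit-initial}, which only records the components at $J_0$, really agrees with the full categorical limit. This requires every object of $I$ to map to some terminal object and the compatibility conditions to capture all of the colimit's relations. I would handle this by working with the tuple over all objects, which is the genuine categorical limit, and noting that the components at non-terminal objects are determined by those at $I_0$. Since $\Phi$ is injective and the colimit's simplices are generated by the images of the $\mathcal{F}(i)$, it is harmless to use only $I_0$ as the statement does. Alternatively, one can replace $I_0$ by all objects: Proposition~\ref{pro:Mainresultpoly} holds verbatim for the product over all objects, because the preorder on the limit is still detected componentwise.

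\emph{Step 3: conclusion.} By Step 1, $p$ is a vertex exactly when $\Phi(p)$ is a vertex of $\lim\chi$. By Proposition~\ref{pro:Mainresultpoly}, this holds iff $\lim\chi_{\Phi(p)}=\{\Phi(p)\}$. Equation~\eqref{eq:limchix} gives
\[
\lim\chi_{\Phi(p)}=\bigl\{y\in\lim\chi:\ \pi_i(y)\in\Conv\bigl(\Vsupp(s_i^\ast p)\bigr)\ \text{for all } i\in I_0\bigr\}.
\]
Pulling back through $\Phi$, this is the set of $q$ with $(s_i^\ast q)_{i\in I_0}\in\prod_{i\in I_0}\Conv\bigl(\Vsupp(s_i^\ast p)\bigr)$. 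The set always contains $p$, so it equals $\{p\}$ exactly when $p$ is the unique such $q$. This is the claim.
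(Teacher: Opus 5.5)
Your proposal is correct and follows the same route as the paper. It identifies $\sDist(\colim\mathcal{F},Y)$ with $\lim\chi$ for $\chi=\sDist(\mathcal{F}(-),Y)\colon I^{\mathrm{op}}\to\catPoly$, using that $\catsSet(-,D(Y))$ sends colimits to limits, and then applies Proposition~\ref{pro:Mainresultpoly} with Equation~\eqref{eq:limchix}; your Step~2 additionally spells out two hypotheses the paper leaves implicit, namely finiteness so that Proposition~\ref{pro:simdistspecialpoly} applies, and that every object of $I$ maps to some object of $I_0$ so that the $I_0$-indexed limit of Definition~\ref{def:limit-initial} agrees with the categorical one.
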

\begin{proof}
By Proposition~\ref{pro:simdistspecialpoly}, applying the functor 
$\sDist(-,Y)=\catsSet(-,D(Y))$ to the diagram $\mathcal{F}$ yields a diagram in $\catPoly$:
\[
\chi \colon I^{\op} \to \catPoly,
\]
where $\{\sDist(\mathcal{F}(i),Y)\}_{i\in I_0}$ are the initial objects in $\chi$.  
In addition, by~\cite[Theorem 3.4.7]{riehl2017category}, we have
\[
\lim \chi \;=\; \catsSet(\colim \mathcal{F},D(Y)) \;=\; \sDist(\colim \mathcal{F},Y).
\]
Therefore, by Proposition~\ref{pro:Mainresultpoly}, a simplicial distribution 
$p \in \sDist(\colim \mathcal{F},Y)$ is a vertex if and only if it is the unique simplicial distribution 
$q\colon \colim \mathcal{F} \to D(Y)$ contained in $\lim \chi_{p}$.  
By Equation~(\ref{eq:limchix}), the condition $q \in \lim \chi_{p}$ is equivalent to requiring that 
\[
s_i^{\ast}(q) \in \Conv\!\bigl(\Vsupp(s_i^{\ast}(p))\bigr), \qquad \forall i \in I_0.
\]
\end{proof}

This result generalizes \cite[Theorem 5.3]{kharoof2024extremal}.

Our main interest is the following special case.
Let $X^{(1)},\dots,X^{(n)}$ be simplicial sets, and let $W$ be a common simplicial
subset of each $X^{(i)}$. Consider the diagram $\mathcal{F}$:
\begin{equation}\label{eq:colimit}
\begin{tikzcd}[column sep=small]
&& W \arrow[dll,"s_1"'] \arrow[d,"s_i"]  \arrow[drr,"s_n"] & \\
X^{(1)} & \cdots& X^{(i)} &\cdots & X^{(n)}
\end{tikzcd}
\end{equation}
where $s_i \colon W \to X^{(i)}$ are the inclusion maps.
Then the colimit of $\mathcal{F}$ is the simplicial set obtained by gluing all
$X^{(i)}$ along $W$:
$$
\colim \mathcal{F} \cong X^{(1)} \cup_W \cdots \cup_W X^{(n)}.
$$
In each dimension $k\ge 0$, the set of $k$-simplices of $\colim \mathcal{F}$ is the disjoint union of the $k$-simplices of $X^{(1)},\dots,X^{(n)}$, modulo the identifications induced by $W$.
Applying the functor $\sDist(-,Y)$ {on Diagram (\ref{eq:colimit})} yields the following diagram:
\begin{equation}\label{eq:fibfromcolim}
\begin{tikzcd}
\sDist(X^{(1)},Y) \arrow[drr,"s^{\ast}_1"'] 
& \cdots & \sDist(X^{(i)},Y) \arrow[d,"s^{\ast}_i"] 
& \cdots 
& \sDist(X^{(n)},Y) \arrow[dll,"s^{\ast}_n"] \\
&& \sDist(W,Y) &&
\end{tikzcd}
\end{equation}
In this special case, $\sDist(X,Y)$ is the limit of Diagram~\eqref{eq:fibfromcolim}, i.e., the corresponding fiber product:
\[
\sDist(X,Y) \cong \sDist(X^{(1)},Y) \times_{\sDist(W,Y)} \cdots \times_{\sDist(W,Y)} \sDist(X^{(n)},Y).
\]

Before we state our vertex characterization results, we will recall an important result about non-contextual distributions on glued measurement spaces. This result appears in \cite{okay2022simplicial} as the \emph{gluing lemma}.

\begin{lem}
    \label{lem:noncontextglue}
Let $X$ be the simplicial set obtained by gluing $X^{(1)}$ and $X^{(2)}$ along a common simplicial subset $W=\Delta^n$. Then a simplicial distribution $p \in \sDist(X,Y)$ is non-contextual if and only if its restriction to each $X^{(i)}$ is non-contextual.
\end{lem}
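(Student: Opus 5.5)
The plan is to prove the two directions separately, using the fiber product description
\[
\sDist(X,Y)\cong \sDist(X^{(1)},Y)\times_{\sDist(W,Y)}\sDist(X^{(2)},Y),
\]
together with one special feature of $W=\Delta^n$: the polytope $\sDist(\Delta^n,Y)$ is a simplex whose vertices are exactly the deterministic distributions.

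\emph{The easy direction.} Restriction along the inclusion $X^{(i)}\to X$, i.e.\ the map $s_i^\ast$ of Definition~\ref{def:pushforward}, preserves convex combinations. It also sends deterministic distributions to deterministic ones, since deterministic means factoring through $\delta\colon Y\to D(Y)$. So if $p=\sum_k\gamma_k\delta^{(k)}$ with each $\delta^{(k)}$ deterministic, then $p|_{X^{(i)}}=\sum_k\gamma_k\,\delta^{(k)}|_{X^{(i)}}$ is non-contextual.

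\emph{The key structural fact.} Since $\Delta^n$ is generated by a single $n$-simplex, we have $\sDist(\Delta^n,Y)\cong D(Y_n)$ by the Yoneda lemma. This is a standard simplex. Its vertices are the point masses $\delta^y$, $y\in Y_n$, which are precisely the deterministic distributions on $\Delta^n$. Consequently every $q\in\sDist(\Delta^n,Y)$ has a \emph{unique} expression $q=\sum_{y}q(y)\,\delta^y$, because the vertices of a simplex are affinely independent.

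\emph{The converse direction.} Suppose $p|_{X^{(1)}}=\sum_s\alpha_s\,\delta^s$ and $p|_{X^{(2)}}=\sum_t\beta_t\,\delta^t$, with all $\delta^s,\delta^t$ deterministic. Let $q=p|_W$, and for a deterministic $\delta^s$ write $y(s)\in Y_n$ for the simplex with $\delta^s|_W=\delta^{y(s)}$.
\begin{enumerate}
\item Restricting the first decomposition to $W$ and invoking uniqueness gives $\sum_{s:\,y(s)=y}\alpha_s=q(y)$ for every $y\in Y_n$. Likewise $\sum_{t:\,y(t)=y}\beta_t=q(y)$.
\item Whenever $y(s)=y(t)$, the maps $\delta^s$ and $\delta^t$ agree on $W$. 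By the universal property of the pushout $X=X^{(1)}\cup_W X^{(2)}$, they glue to a simplicial map $X\to Y$, i.e.\ a deterministic distribution $\delta^s\cup\delta^t$ on $X$.
\item Define
\[
r=\sum_{y:\,q(y)>0}\ \sum_{y(s)=y(t)=y}\frac{\alpha_s\beta_t}{q(y)}\,(\delta^s\cup\delta^t).
\]
By step 1 the coefficients sum to $\sum_y q(y)=1$, so $r$ is non-contextual.
\item Restricting $r$ to $X^{(1)}$ and summing out $t$ with step 1 gives $\sum_s\alpha_s\delta^s=p|_{X^{(1)}}$. Here note that any $s$ with $\alpha_s>0$ has $q(y(s))>0$, so no term is lost. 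Symmetrically, $r|_{X^{(2)}}=p|_{X^{(2)}}$.
\item A point of the fiber product is determined by its two components, so $r=p$, and hence $p$ is non-contextual.
\end{enumerate}

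The main obstacle is step 1: matching the total weights over each fiber $y$ on both sides. This is exactly where $W=\Delta^n$ is needed. For a general $W$, $\sDist(W,Y)$ need not be a simplex, the decomposition of $q$ is not unique, and the two marginal weightings over $W$ could differ. Everything after step 1 is a routine product-coupling computation, essentially gluing two measures along a common marginal by conditional independence.
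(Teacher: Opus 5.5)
Your proof is correct and complete. The hypothesis $W=\Delta^n$ is used only for the uniqueness in step~1, that is, for the injectivity of $D(\catsSet(W,Y))\to\sDist(W,Y)\cong D(Y_n)$; after that, the conditional-independence weights $\alpha_s\beta_t/q(y)$ glue the two decompositions exactly as you verify.

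The paper gives no proof of this lemma; it quotes it from \cite{okay2022simplicial}. Your construction is the standard argument for this kind of gluing result: it shows that $D$ weakly preserves the pullback $\catsSet(X,Y)=\catsSet(X^{(1)},Y)\times_{Y_n}\catsSet(X^{(2)},Y)$.
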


\begin{ex}\label{ex:tree}
    Let $X$ be a one-dimensional simplicial set whose underlying graph is a tree. By induction we can show that every simplicial distribution on $X$ is non-contextual. For that we think of $X$ as obtained by a sequence of gluings where each time a $\Delta^1$ is glued along one of its node to the previous stage, which is also a tree. Then we can apply Lemma \ref{lem:noncontextglue} at each stage of the gluing to conclude that every simplicial distribution on $X$ is non-contextual.
\end{ex}

Now, we turn to the problem of characterizing vertices of $\sDist(X,Y)$ when $X$ is obtained by gluing several spaces along a common subspace.
    Under the identification $L^{(i)}=\sDist(X^{(i)},Y)$, we can apply the results of Section~\ref{subsec:fibpro} to this setting. We begin with applying Corollary \ref{cor:thetheoremforfibre}.
%

%\coc{better as a corollary}

\begin{cor}\label{cor:insteadoflimit}
For $X=X^{(1)} \cup_{W} \cdots \cup_{W} X^{(n)}$, a simplicial distribution
$p \colon X \to D(Y)$ is a vertex in $\sDist(X,Y)$ 
if and only if the set
\begin{equation}\label{eq:q1qn}
\set{(q^{(1)},\dots,q^{(n)}) \in  \prod_{i=1}^n \Conv\!\left(\Vsupp(p|_{X^{(i)}})\right) :\; q^{(1)}|_{W}=\cdots=q^{(n)}|_{W}}
\end{equation}
consists only of $(p|_{X^{(1)}},\dots,p|_{X^{(n)}})$.
\end{cor}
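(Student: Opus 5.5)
The plan is to reduce the statement to Corollary~\ref{cor:thetheoremforfibre}, applied to the fiber product in Diagram~\eqref{eq:fibfromcolim}.

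First, I would set $L^{(i)}=\sDist(X^{(i)},Y)$, $M=\sDist(W,Y)$ and $f_i=s_i^\ast$. By Proposition~\ref{pro:simdistspecialpoly} these are polytopes in standard form, assuming the usual finiteness hypotheses on $X^{(i)}$, $W$ and $Y$. Each $f_i$ is affine, since pullback preserves convex combinations. For the inclusion $s_i$, the map $s_i^\ast(q)$ is simply the restriction $q|_W$.

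Second, I would use the identification
\[
\sDist(X,Y)\cong \sDist(X^{(1)},Y)\times_{\sDist(W,Y)}\cdots\times_{\sDist(W,Y)}\sDist(X^{(n)},Y).
\]
This follows, as in the proof of Theorem~\ref{thm:GenGluing}, from the fact that $\catsSet(-,D(Y))$ sends colimits to limits. Concretely, the isomorphism is $p\mapsto (p|_{X^{(1)}},\dots,p|_{X^{(n)}})$. It is an affine bijection, so it carries vertices to vertices. Under it, $p$ corresponds to the point $x=(x^{(1)},\dots,x^{(n)})$ with $x^{(i)}=p|_{X^{(i)}}$.

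Third, I would apply Corollary~\ref{cor:thetheoremforfibre} to $x$. With $f_i(q^{(i)})=q^{(i)}|_W$, the set in~\eqref{eq:limchixfiber} becomes exactly the set~\eqref{eq:q1qn}. So $p$ is a vertex if and only if that set is the singleton $\{(p|_{X^{(1)}},\dots,p|_{X^{(n)}})\}$.

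The only point needing care is whether ``vertex'' and $\Vsupp$ are intrinsic. The coordinates of the fiber product, viewed as a subset of $\prod_i L^{(i)}$, need not match the coordinates of $\sDist(X,Y)$ in its own standard-form presentation. However, the argument only uses $\Vsupp$ computed inside each factor $L^{(i)}$, through Corollary~\ref{cor:thetheoremforfibre} and Proposition~\ref{pro:Mainresultpoly}, applied to the fiber product as a limit. Vertices themselves are preserved under affine isomorphism. So no issue arises, and the statement follows directly.
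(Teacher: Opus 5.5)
Your proposal is correct and follows the paper's route: the paper obtains this corollary by specializing Corollary~\ref{cor:thetheoremforfibre} to Diagram~\eqref{eq:fibfromcolim}, with $f_i=s_i^\ast$ acting as restriction to $W$ and $\sDist(X,Y)$ identified with the fiber product. Your remark on coordinate-independence is an addition the paper leaves implicit, and it is harmless: Lemma~\ref{lem:xleqythen} and Corollary~\ref{cor:fVsupp} show that $\preceq$, and hence $\Vsupp$, is invariant under affine isomorphisms.
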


Next, we apply Theorem \ref{thm:VertofGluingmain} to obtain the following result.

%\coc{changed to a corollary}

\begin{cor}\label{cor:VertofGluing}
Let $p\in \sDist(X,Y)$, where $X=X^{(1)} \cup_{W} \cdots \cup_{W} X^{(n)}$. For each $1 \leq i \leq n$, define 
\[
A_i=\set{q|_{W}: q \in \Vsupp(p|_{X^{(i)}})}.\] 
If the following conditions hold:
\begin{itemize}
    \item for every $1 \leq i \leq n$, $|A_i|=|\Vsupp(p|_{X^{(i)}})|$, and the set $A_i$ is affinely independent;
    \item the intersection $\cap_{i=1}^n \Conv A_i$ consists of a single point;
\end{itemize}
then $p$ is a vertex.
\end{cor}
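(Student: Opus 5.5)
This is a direct specialization of Theorem~\ref{thm:VertofGluingmain} to the fiber product arising from gluing, so the plan is to identify the data of Corollary~\ref{cor:VertofGluing} with the data of that theorem and then invoke it.

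\emph{Step 1: the fiber-product structure.} Set $L^{(i)}=\sDist(X^{(i)},Y)$, $M=\sDist(W,Y)$, and $f_i=s_i^\ast$. The maps $s_i^\ast$ are the restriction maps $q\mapsto q|_W$. Each is affine, since pushforward along a simplicial map preserves convex combinations (remark after Definition~\ref{def:pushforward}). By Proposition~\ref{pro:simdistspecialpoly}, all these polytopes are in standard form, assuming the usual finiteness hypotheses on $X^{(i)}$, $W$ and $Y$. The discussion preceding Lemma~\ref{lem:noncontextglue} identifies $\sDist(X,Y)$ with the fiber product of Diagram~\eqref{eq:fibfromcolim}. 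Under this identification $p$ corresponds to the tuple $x=(p|_{X^{(1)}},\dots,p|_{X^{(n)}})$. The only point to check is that this isomorphism is affine. It is, because it is just restriction to the pieces, so it preserves vertices in both directions.

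\emph{Step 2: matching the hypotheses.} With $x^{(i)}=p|_{X^{(i)}}$, the set $A_i=\{f_i(y): y\in\Vsupp(x^{(i)})\}$ of Theorem~\ref{thm:VertofGluingmain} is exactly $\{q|_W : q\in\Vsupp(p|_{X^{(i)}})\}$. The condition $|A_i|=|\Vsupp(p|_{X^{(i)}})|$ is the stated equivalent of injectivity of $f_i$ on $\Vsupp(x^{(i)})$. The affine-independence and single-point-intersection conditions are literally the same.

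\emph{Step 3: conclusion.} Theorem~\ref{thm:VertofGluingmain} then gives that $x$ is a vertex of the fiber product. Transporting back along the affine isomorphism, $p$ is a vertex of $\sDist(X,Y)$.

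There is no real obstacle here. The only step needing care is the justification in Step~1 that the gluing identification is an affine isomorphism of polytopes in standard form, and that follows from the colimit-to-limit property already used in Theorem~\ref{thm:GenGluing}.
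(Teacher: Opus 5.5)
Your proposal is correct and matches the paper's approach: the paper also sets $L^{(i)}=\sDist(X^{(i)},Y)$, $M=\sDist(W,Y)$ and $f_i=s_i^\ast$, identifies $\sDist(X,Y)$ with the fiber product of Diagram~\eqref{eq:fibfromcolim}, and applies Theorem~\ref{thm:VertofGluingmain}. Your added observation, that this identification is an affine bijection and so preserves vertices, spells out a step the paper leaves implicit.
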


Proposition~\ref{pro:VertexAimain} yields the following constructive converse.

%\coc{better as a corollary}

\begin{cor}\label{cor:VertexAi}
Let $X=X^{(1)} \cup_{W} \cdots \cup_{W} X^{(n)}$. Given sets $A_1,\dots,A_{n} \subseteq \sDist(W,Y)$ such that:  
\begin{itemize}
    \item for every $1 \leq i \leq n$, the set $A_i$ is affinely independent;
    \item the intersection $\bigcap_{i=1}^n \Conv (A_i)$ consists of exactly one point $u$;
    \item for every $1 \leq i \leq n$, there exists a set $\tilde{A}_i$ {that generates a face} in $\sDist(X^{(i)},Y)$ such that $s_i^*|_{\tilde{A}_i}$ {is} a bijection from $\tilde{A_i}\to A_i$.
\end{itemize}
We define the simplicial distribution $p\colon X \to D(Y)$ by setting: 
$$
p|_{X^{(j)}}=\sum_{q \in \tilde{A}_j }\alpha_{q|_W} q
$$
where $\alpha_{q|_W}$ is the coefficient in $u=\sum_{q\in \tilde A_j} \alpha_{q|_W} q|_W$. Then $p$ is a vertex.
\end{cor}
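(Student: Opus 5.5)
The plan is to obtain Corollary~\ref{cor:VertexAi} as a direct specialization of Proposition~\ref{pro:VertexAimain} to the fiber product diagram~\eqref{eq:fibfromcolim}. Take $L^{(i)}=\sDist(X^{(i)},Y)$, $M=\sDist(W,Y)$ and $f_i=s_i^\ast$, the restriction map $q\mapsto q|_W$. These are polytopes in standard form by Proposition~\ref{pro:simdistspecialpoly}, assuming the standing finiteness hypotheses on $X^{(i)}$, $W$ and $Y$. Each $s_i^\ast$ preserves convex combinations, so it is an affine map. Moreover, $\sDist(X,Y)$ is identified with the fiber product
\[
\sDist(X^{(1)},Y) \times_{\sDist(W,Y)} \cdots \times_{\sDist(W,Y)} \sDist(X^{(n)},Y),
\]
because $\sDist(-,Y)=\catsSet(-,D(Y))$ turns the colimit~\eqref{eq:colimit} into a limit.

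Under this dictionary, the three hypotheses of the corollary are exactly the three hypotheses of Proposition~\ref{pro:VertexAimain}:
\begin{itemize}
\item affine independence of $A_i\subseteq M$;
\item $\bigcap_i\Conv(A_i)=\{u\}$;
\item the existence of face-generating sets $\tilde A_i\subseteq L^{(i)}$ on which $f_i=s_i^\ast$ restricts to a bijection onto $A_i$.
\end{itemize}

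Next, we match the constructed point. Since $A_j$ is affinely independent, the coefficients $\alpha_z$ expressing $u$ as a convex combination of elements of $A_j$ are unique. The bijection $\tilde A_j\to A_j$, $q\mapsto q|_W$, lets us index them by $q\in\tilde A_j$, giving $\alpha_{q|_W}$. So the tuple $x^{(j)}=\sum_{q\in\tilde A_j}\alpha_{q|_W}\,q$ of Proposition~\ref{pro:VertexAimain} coincides with the family $(p|_{X^{(j)}})_j$ prescribed in the corollary.

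The only point needing a little care is that this family actually defines a simplicial distribution $p$ on the glued space $X$. The proof of Proposition~\ref{pro:VertexAimain} shows $f_j(x^{(j)})=u$ for every $j$. Hence the $x^{(j)}$ agree on $W$, and by the universal property of the colimit they glue to a unique $p\colon X\to D(Y)$ with $p|_{X^{(j)}}=x^{(j)}$.

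Proposition~\ref{pro:VertexAimain} then says that this point of the fiber product is a vertex. Transporting along the affine isomorphism $\sDist(X,Y)\cong\prod_{\sDist(W,Y)}\sDist(X^{(i)},Y)$, which preserves vertices, $p$ is a vertex of $\sDist(X,Y)$. I expect no real obstacle here. The main bookkeeping is to make sure the notation $\alpha_{q|_W}$ is well defined, which uses the uniqueness of affine coordinates and the injectivity of restriction on $\tilde A_j$.
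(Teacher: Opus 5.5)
Your proposal is correct and follows the paper's route: the paper obtains this corollary by specializing Proposition~\ref{pro:VertexAimain} to the fiber product diagram~\eqref{eq:fibfromcolim}, with $L^{(i)}=\sDist(X^{(i)},Y)$, $M=\sDist(W,Y)$ and $f_i=s_i^\ast$. Your extra bookkeeping only makes explicit what the paper leaves implicit, namely the uniqueness of the coefficients $\alpha_{q|_W}$ and the gluing of the components $x^{(j)}$ into a single $p$ via the colimit.
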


Finally, Propositions~\ref{pro:affinXimpliesaffinWmain} and~\ref{pro:VertofGluihalfsecondirmain} imply the following partial converse to Corollary~\ref{cor:VertofGluing}.
%

%\coc{better as a corollary}

\begin{cor}\label{cor:VertofGluihalfsecondir} 
With the same notation as in Corollary~\ref{cor:VertofGluing}, if $p \colon X \to D(Y)$ is a vertex, then the following conditions hold:
\begin{enumerate}
    \item If for some $1 \leq j \leq n$, the set $\Vsupp(p|_{X^{(j)}})$ is affinely independent, then $|A_j|=|\Vsupp(p|_{X^{(j)}})|$ and the set $A_j$ is also affinely independent.
    \item  The intersection $\bigcap_{i=1}^n \Conv(A_i)$ contains exactly one point $u$. In addition, for every $1 \leq j \leq n$, we can write an affine combination for the common point $u$ by the elements of $A_j$ such that the coefficient of every $q \in A_j$ is nonzero.
\end{enumerate}
\end{cor}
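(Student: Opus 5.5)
This corollary is a direct translation of Propositions~\ref{pro:affinXimpliesaffinWmain} and~\ref{pro:VertofGluihalfsecondirmain} into the setting of simplicial distributions. The plan is therefore to identify the data of the corollary with the data of the fiber product diagram~\eqref{eq:fibeq}, and then quote those two propositions.

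First, I would set up the dictionary explicitly. Take $L^{(i)}=\sDist(X^{(i)},Y)$, $M=\sDist(W,Y)$, and $f_i=s_i^{\ast}$, the restriction map $q\mapsto q|_W$. Each of these is a polytope in standard form by Proposition~\ref{pro:simdistspecialpoly}. Each $f_i$ is affine, since $s_i^\ast$ preserves convex combinations. As noted after Diagram~\eqref{eq:fibfromcolim}, $\sDist(X,Y)$ is isomorphic to the fiber product $L^{(1)}\times_M\cdots\times_M L^{(n)}$ via $p\mapsto (p|_{X^{(1)}},\dots,p|_{X^{(n)}})$. This isomorphism is a bijective affine map, so it preserves vertices. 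Under this identification the point $x$ corresponds to $p$, with $x^{(i)}=p|_{X^{(i)}}$. The set $A_i$ of Corollary~\ref{cor:VertofGluing} is then literally $\{f_i(y): y\in\Vsupp(x^{(i)})\}$, the set $A_i$ of Theorem~\ref{thm:VertofGluingmain}.

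Second, I would apply the two propositions. Part (1) is exactly Proposition~\ref{pro:affinXimpliesaffinWmain} applied to the vertex $x$ and the index $j$. Part (2) is exactly Proposition~\ref{pro:VertofGluihalfsecondirmain}. That proposition gives uniqueness of the common point $u$, and also an affine (in fact convex) decomposition of $u$ over $A_j$ with all coefficients nonzero.

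The only point requiring a little care is the nonzero-coefficient clause when $f_j$ is not injective on $\Vsupp(x^{(j)})$. The proof of Proposition~\ref{pro:VertofGluihalfsecondirmain} produces strictly positive coefficients $\alpha_y$ indexed by $y\in\Vsupp(x^{(j)})$. To get coefficients indexed by $q\in A_j$, I would sum $\alpha_y$ over the fiber $\{y : f_j(y)=q\}$. Each such sum is a sum of positive numbers, so it stays positive, and hence every coefficient over $A_j$ is nonzero. Apart from this observation, no obstacle is expected: the argument is bookkeeping through the identification $\sDist(X,Y)\cong$ fiber product.
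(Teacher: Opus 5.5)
Your proposal is correct and takes the same approach as the paper: identify $\sDist(X,Y)$ with the fiber product $\sDist(X^{(1)},Y)\times_{\sDist(W,Y)}\cdots\times_{\sDist(W,Y)}\sDist(X^{(n)},Y)$ along the restriction maps $s_i^\ast$, then apply Propositions~\ref{pro:affinXimpliesaffinWmain} and~\ref{pro:VertofGluihalfsecondirmain}. Your extra step for the case where $f_j$ is not injective on $\Vsupp(x^{(j)})$, summing the positive coefficients over each fibre, is correct bookkeeping that the paper leaves implicit.
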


\section{Distributions on graphs}\label{sec:distgraphs}
%\coc{In this section we should explicitly say that 
%\[
%\sDist(X,\Delta_{\ZZ_m}) \cong \Dist(X,m)
%\]
%when $X$ is one-dimensional and then in the rest of the sections, since all uses graphs as measurement spaces, we should use $\Dist(X,m)$ consistently.
%}

%In this section, we study extremal simplicial distributions on three types of one-dimensional measurement spaces 
%(directed (multi)graphs):
%\begin{itemize}
%\item \co{complete} bipartite graphs,
%\item rose graphs,
%\item dipole graphs.
%\end{itemize}
%We view each of them as a gluing of several spaces along a common subspace, which allows us to apply the results from Section~\ref{sec:Glumeas}.

In this section, we study extremal simplicial distributions on three types of
one-dimensional measurement spaces: complete bipartite graphs, rose graphs,
and dipole graphs. In each case, the measurement space is naturally viewed
as a gluing of simpler spaces along a common subspace, allowing us to apply
the results of Section~\ref{sec:Glumeas}.

As mentioned in Example~\ref{ex:one dim sdist}, a one-dimensional simplicial set $X$ may be viewed as a directed (multi)graph, and there is a canonical bijection
$$
\Dist(X, m) \cong \sDist(X, \Delta_{\mathbb{Z}_m}).
$$
Since from now on we will work with graphs as measurement spaces, we will use the notation $\Dist(X, m)$ for these simplicial distributions.

\subsection{Complete bipartite graph}
\label{subsec:Gluengtrees}

%\co{Complete} bipartite graphs play a distinguished role in the study of contextuality and nonlocality. The vertices of the graph partitioned into two sets of nodes, which we call $A$ and $B$, represent the measurement settings of two parties, and the edges represent the joint measurements \cite{brunner2014bell}. Using our methods from Section \ref{sec:Glumeas}, we can detect vertices in the corresponding polytopes of simplicial distributions, extending the known classes of vertices in the literature. We treat a \co{complete} bipartite graph as obtained by gluing a family of star graphs, indexed by the nodes in $B$, along their external nodes, which are the nodes in $A$. We then apply the results of Section \ref{sec:Glumeas} to detect vertices in the corresponding fiber product of simplicial distributions.

Complete bipartite graphs arise naturally in the study of contextuality and
nonlocality, where the two parts of the graph represent the measurement
settings of two parties and the edges their joint measurements
\cite{brunner2014bell}.

\begin{defn}\label{def:Kn1n2}
For two natural numbers $n_1$ and $n_2$, we define the simplicial set $K_{n_1,n_2}$ as follows:
\begin{itemize}
    \item $(K_{n_1,n_2})_0=A \sqcup B$, where $|A|=n_1$ and $|B|=n_2$.
    \item The generating simplices consist of the edges $\sigma_{x,y}$ for every $x \in A$ and $y \in B$, with $d_0(\sigma_{x,y})=y$ and $d_1(\sigma_{x,y})=x$.  
\end{itemize}
We sometimes write $K_{A,B}$ to emphasis the sets of nodes. Note that $K_{n_1,n_2}$ is the \emph{complete bipartite graph} with $n_1$ and $n_2$ nodes in the two parts, viewed as a simplicial set; see Figure \ref{fig:bipartite-k33}.

\end{defn}

We view $K_{n_1,n_2}$ as a gluing of star graphs
indexed by the nodes in $B$, along their common external nodes $A$, and
apply the framework of Section~\ref{sec:Glumeas} to detect vertices of the
corresponding fiber product of simplicial distribution polytopes. 
\begin{figure}[ht]
\centering
\begin{tikzpicture}[x=1cm,y=1cm]
  \coordinate (L1) at (0,  1.4);
  \coordinate (L2) at (0,  0);
  \coordinate (L3) at (0, -1.4);
  \coordinate (R1) at (3.2,  1.4);
  \coordinate (R2) at (3.2,  0);
  \coordinate (R3) at (3.2, -1.4);

  \foreach \i in {1,2,3}{
    \foreach \j in {1,2,3}{
      \draw[line width=0.9pt] (L\i)--(R\j);
    }
  }

  \foreach \v in {L1,L2,L3,R1,R2,R3}{
    \node[circle,fill=black,inner sep=1.2pt] at (\v) {};
  }
\end{tikzpicture}
\caption{The complete bipartite graph \(K_{3,3}\).}
\label{fig:bipartite-k33}
\end{figure}
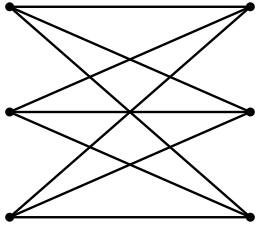

\begin{defn}\label{def:CordG}
A \emph{product-simplex vertex in $\RR^{km}$} is a vector of the form 
\begin{equation}\label{eq:coorvec}
(e_{i_1},
e_{i_2}, \dots,
e_{i_k})^T
=
\begin{pmatrix}
e_{i_1} \\
\rule{0.6em}{0.4pt} \\
e_{i_2} \\
\rule{0.6em}{0.4pt} \\
\vdots \\
\rule{0.6em}{0.4pt} \\
e_{i_k}
\end{pmatrix},
\end{equation}
where $e_{i_j} \in \RR^m$ is the $i_j$\textsuperscript{th} standard basis vector, that is, the vector with a $1$ in the $i_j$-th coordinate and zeros elsewhere. Equivalently, the vector in \eqref{eq:coorvec} is the vector in $\RR^{km}$ whose entries are equal to $1$ in positions
$
i_1,\; m+i_2,\; \dots,\; (k-1)m+i_k,
$
and equal to $0$ elsewhere. These are precisely the vertices of the product of simplices
\[
\underbrace{\Delta^{m-1}\times \cdots \times \Delta^{m-1}}_{k \text{ factors}},
\]
hence the terminology.
\end{defn}

\begin{rem}\label{rem:gluingstars}
The simplicial set
$K_{\{x_1,\dots,x_{n_1}\},\{y_1,\dots,y_{n_2}\}}$
can be viewed as the gluing of the stars  
$$
K_{\{x_1,\dots,x_{n_1}\},\{y_1\}},\dots,
K_{\{x_1,\dots,x_{n_1}\},\{y_{n_2}\}}
$$
along the set of nodes $\{x_1,\dots,x_{n_1}\}$. Since each star is a tree, by Example~\ref{ex:tree} every simplicial distribution on each star is non-contextual and hence every vertex is deterministic.
%Note that all vertices on a star are deterministic distributions
%(see \co{Example~\ref{ex:tree}}). 
More precisely, a vertex in
$\Dist\!\bigl(K_{\{x_1,\dots,x_k\},\{y\}},{m}\bigr)$
is given by the following tuple
\begin{equation}\label{eq:Eikj}
\langle E_{i_1,j},\dots,E_{i_k,j} \rangle,
\end{equation}
where $E_{i,j}$ denotes the $m \times m$ matrix with a $1$ in position $(i,j)$ and zeros elsewhere. This means that the restriction to the edge $\sigma_{x_i,y}$ equals $E_{i,j}$. So the restriction to the set of nodes $\set{x_1,\dots,x_k}$ equals the product-simplex vertex in $\RR^{km}$ of Equation (\ref{eq:coorvec}).
\end{rem}

\begin{ex}\label{ex:A1A2AA4}
Each of the following sets is an affinely independent set of product-simplex vertices in $\RR^{2\cdot 3}$
$$
A_1=\set{\begin{pmatrix}
1 \\
0 \\
0 \\
1 \\
0 \\
0
\end{pmatrix},
\begin{pmatrix}
1 \\
0 \\
0 \\
0 \\
0 \\
1
\end{pmatrix},
\begin{pmatrix}
0 \\
1 \\
0 \\
0 \\
1 \\
0
\end{pmatrix}
\begin{pmatrix}
0 \\
0 \\
1 \\
1 \\
0 \\
0
\end{pmatrix}
}, \;
 A_2=\set{\begin{pmatrix}
1 \\
0 \\
0 \\
1 \\
0 \\
0
\end{pmatrix},
\begin{pmatrix}
1 \\
0 \\
0 \\
0 \\
1 \\
0
\end{pmatrix},
\begin{pmatrix}
0 \\
1 \\
0 \\
1 \\
0 \\
0
\end{pmatrix}
\begin{pmatrix}
0 \\
0 \\
1 \\
0 \\
0 \\
1
\end{pmatrix}
}
$$
$$
A_3=\set{\begin{pmatrix}
1 \\
0 \\
0 \\
1 \\
0 \\
0
\end{pmatrix},
\begin{pmatrix}
1 \\
0 \\
0 \\
0 \\
0 \\
1
\end{pmatrix},
\begin{pmatrix}
0 \\
1 \\
0 \\
1 \\
0 \\
0
\end{pmatrix}
\begin{pmatrix}
0 \\
0 \\
1 \\
0 \\
1 \\
0
\end{pmatrix}
}, \;
 A_4=\set{\begin{pmatrix}
1 \\
0 \\
0 \\
0 \\
1 \\
0
\end{pmatrix},
\begin{pmatrix}
1 \\
0 \\
0 \\
0 \\
0 \\
1
\end{pmatrix},
\begin{pmatrix}
0 \\
1 \\
0 \\
1 \\
0 \\
0
\end{pmatrix}
\begin{pmatrix}
0 \\
0 \\
1 \\
1 \\
0 \\
0
\end{pmatrix}
 }.
$$
Moreover, the intersection of their convex hulls has exactly one point given by the average of the four vertices, i.e., { with all coefficients} equal to $\frac{1}{4}$.

Now, for every $1\leq i \leq 4$, we choose a simplicial distribution $p^{(i)}\colon K_{\set{x_1,x_{2}},\set{y_i}} \to D(\Delta_{\ZZ_3})$ such that 
$\set{q|_{\set{x_1,x_2}}: \; q\in \Vsupp(p^{(i)})}=A_i$, where we mean by $\set{x_1,x_2}$ the simplicial set generated by the nodes $x_1,x_2$. For instance, we define 
$$
p^{(1)}=\langle \begin{pmatrix}%
 \frac{1}{2} & 0 & 0   \\
0 & 0 & \frac{1}{4}   \\
0& \frac{1}{4} & 0   
\end{pmatrix},%
\begin{pmatrix}%
\frac{1}{4} & \frac{1}{4} & 0 \\
0 & 0 & \frac{1}{4}  \\
\frac{1}{4} & 0 & 0    
\end{pmatrix} \rangle
$$
$$
p^{(2)}=\langle \begin{pmatrix}%
 \frac{1}{2} & 0 & 0   \\
0 & 0 & \frac{1}{4}   \\
0& \frac{1}{4} & 0   
\end{pmatrix},%
\begin{pmatrix}%
\frac{1}{4} & 0 & \frac{1}{4}  \\
\frac{1}{4} & 0 & 0  \\
0 & \frac{1}{4} & 0    
\end{pmatrix} \rangle
$$
$$
p^{(3)}=\langle \begin{pmatrix}%
 \frac{1}{2} & 0 & 0   \\
0 & 0 & \frac{1}{4}   \\
0& \frac{1}{4} & 0   
\end{pmatrix},%
\begin{pmatrix}%
\frac{1}{4} & 0 & \frac{1}{4}  \\
0 & \frac{1}{4} & 0  \\
\frac{1}{4} & 0 & 0    
\end{pmatrix} \rangle
$$
$$
p^{(4)}=\langle \begin{pmatrix}%
 \frac{1}{2} & 0 & 0   \\
0 & 0 & \frac{1}{4}   \\
0& \frac{1}{4} & 0   
\end{pmatrix},%
\begin{pmatrix}%
0 & \frac{1}{4} & \frac{1}{4} \\
\frac{1}{4} & 0 & 0  \\
\frac{1}{4} & 0 & 0    
\end{pmatrix} \rangle.
$$
Consider the measurement space $K_{2,4}=K_{\set{x_1,x_2},\set{y_1,y_2,y_3,y_4}}$.
%For every $1\leq i \leq 4$, we define $X^{(i)}$ to be the subsimplicial set of $K_{2,4}$ 
%that generated by the $1$-simplices $\sigma_{x_1,y_i}, \sigma_{x_2,y_i}$, and define $W$ to be the two points $x_1,x_2$. 
We define $p\colon K_{2,4} \to D(\Delta_{\ZZ_3})$ 
by setting $p|_{K_{\set{x_1,x_2},\set{y_i}}}=p^{(i)}$. This is well defined since for all these distributions the restriction on 
$\set{x_1,x_2}$ is 
$$
\begin{pmatrix}%
 1/2 \\
1/4\\
1/4 \\  1/2 \\
1/4\\
1/4
\end{pmatrix}.
$$
By Corollary~\ref{cor:VertofGluing} we obtain that $p$ is a vertex.
\end{ex}

\begin{defn}
Given a product-simplex vertex in $\RR^{nm}$  
$$
v=(e_{i_1},
e_{i_2}, \dots  
e_{i_k})^T,
$$ 
and given an integer $j$ such that $0\leq j \leq m-1$, we define the simplicial distribution 
$$
p(v,j)\colon K_{\set{x_1,\dots,x_n},\set{y}} \to D(\Delta_{\ZZ_m})
$$ 
to be the deterministic distribution in (\ref{eq:Eikj}).
%with $p(v,j)_{\sigma_{x_i,y}}=E^{i_1,j}$
%$$
%p(v,j)_{\sigma_{x_i,y}}^{ab}=1 \;\; \text{if and only if} \; \; v((i-1)m+a)=1 \; \text{and} \; b=j
%$$
%
\end{defn}
Note that the restriction of $p(v,j)$ to $\set{x_1,\dots,x_{n}}$ is equal to the vector $v$.
\begin{ex}
For $$
v=(e_0,e_1,e_0,e_2)^T,
%\begin{pmatrix}%
%1 \\
%0 \\
%0 \\
%0  \\
%1  \\
%0  \\
%1 \\
%0  \\
%0  \\
%0  \\
%0 \\
%1
%\end{pmatrix},
$$
a product-simplex vertex in $\RR^{4 \cdot 3}$, and $j=2$. The simplicial distribution 
$$
p =p(v,j) \colon K_{\set{x_1,x_2,x_3,x_4},\set{y}} \to D(\Delta_{\ZZ_3})
$$ 
is defined by
$$
p_{\sigma_{x_1,y}}=\begin{pmatrix}
0 & 0 & 1\\
0 & 0 & 0\\
0 & 0 & 0
\end{pmatrix}, \;\; p_{\sigma_{x_2,y}}=\begin{pmatrix}
0 & 0 & 0\\
0 & 0 & 1\\
0 & 0 & 0
\end{pmatrix}, \;\;
p_{\sigma_{x_1,y}}=\begin{pmatrix}
0 & 0 & 1\\
0 & 0 & 0\\
0 & 0 & 0
\end{pmatrix}, \;\;
p_{\sigma_{x_1,y}}=\begin{pmatrix}
0 & 0 & 0\\
0 & 0 & 0\\
0 & 0 & 1
\end{pmatrix}.
$$
\end{ex}
%
%
%
%

%\coc{better as a corollary}

\begin{cor}\label{cor:VertexKn1n2}
Let $A_1,\dots,A_{n_2}$ be sets of product-simplex vertices in $\RR^{n_1 \cdot m}$, such that: 
\begin{itemize}
\item $|A_i|\leq m$ for every $1\leq i \leq n_2$,
    \item for every $1 \leq i \leq n_2$, the set $A_i$ is affinely independent,
    \item The intersection $\cap_{i=1}^n \Conv A_i$ contains exactly one point.
\end{itemize}
Consider the measurement space $K_{n_1,n_2}=K_{\set{x_1,\dots,x_{n_1}},\set{y_1,\dots,y_{n_2}}}$. {If we can choose,} for each $1 \leq i \leq n_2$, an injective map 
$f_i \colon A_i \to \ZZ_m$, then we define 
the simplicial distribution $p\colon K_{n_1,n_2} \to D(\Delta_{\ZZ_m})$ by setting
\begin{equation}\label{eq:p|Xi}
p|_{K_{\set{x_1,\dots,x_{n_1}},\set{y_j}}}=\sum_{v\in A_j }\alpha_{v} p(v,f_j(v)),
\end{equation}
where $\alpha_{v}$ is the coefficient of $v$ in the convex combination $u=\sum_{v\in A_j} \alpha_v v$ that represents the unique element $u\in \cap_{i=1}^n \Conv (A_i)$. Then $p$ is a vertex.
\end{cor}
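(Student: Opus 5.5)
The plan is to deduce this from Corollary~\ref{cor:VertexAi}, applied to the decomposition of Remark~\ref{rem:gluingstars}. Take $X^{(j)}=K_{\set{x_1,\dots,x_{n_1}},\set{y_j}}$ and let $W$ be the discrete simplicial set generated by the nodes $x_1,\dots,x_{n_1}$. Then $K_{n_1,n_2}=X^{(1)}\cup_W\cdots\cup_W X^{(n_2)}$. The space $\sDist(W,\Delta_{\ZZ_m})$ is the product of $n_1$ copies of $\Delta^{m-1}$ inside $\RR^{n_1 m}$, and its vertices are exactly the product-simplex vertices. So $A_1,\dots,A_{n_2}$ are subsets of $\sDist(W,\Delta_{\ZZ_m})$, and the first two hypotheses of Corollary~\ref{cor:VertexAi} are given directly.

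Next I define $\tilde A_j=\set{p(v,f_j(v)) : v\in A_j}$. By the remark after the definition of $p(v,j)$, the restriction of $p(v,f_j(v))$ to $W$ is $v$. Hence $s_j^*$ maps $\tilde A_j$ onto $A_j$, and it is a bijection because it has the left inverse $v\mapsto p(v,f_j(v))$. With this choice, the distribution produced by Corollary~\ref{cor:VertexAi} is exactly the one in \eqref{eq:p|Xi}. It remains to verify the third hypothesis: $\tilde A_j$ generates a face of $\Dist(X^{(j)},m)$.

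This face-generation step is the main point. A deterministic distribution on the star $X^{(j)}$ is determined by a tuple of outcomes $(i_1,\dots,i_{n_1};b)$: the edge $\sigma_{x_k,y_j}$ carries $E_{i_k,b}$. For $v=(e_{i_1},\dots,e_{i_{n_1}})^T\in A_j$, the element $p(v,f_j(v))$ places mass on the entries $(i_k,f_j(v))$ of the edges $\sigma_{x_k,y_j}$. The key feature is that the outcome at $y_j$ differs across elements of $\tilde A_j$, since $f_j$ is injective.

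Now take any point $q=\sum_{v}\beta_v\,p(v,f_j(v))$ with all $\beta_v>0$, and let $d$ be a vertex with $d\preceq q$. All vertices of the star are deterministic, so $d$ corresponds to some tuple $(i'_1,\dots,i'_{n_1};b)$. Because $d\preceq q$, its entry on $\sigma_{x_1,y_j}$ lies in the support of $q$, so $b=f_j(v)$ for some $v\in A_j$. In the column $b$ of any edge matrix, $q$ is supported only by the single term $p(v,f_j(v))$, since no other term uses the outcome $b$ at $y_j$. Therefore $i'_k=i_k(v)$ for every $k$, which gives $d=p(v,f_j(v))$. This shows $\Vsupp(q)=\tilde A_j$, and so by Proposition~\ref{pro:carrier-conv-vsupp}, $\Conv(\tilde A_j)=\carr(q)$ is a face.

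With all three hypotheses of Corollary~\ref{cor:VertexAi} verified, it follows that $p$ is a vertex. The only step needing care is the support argument in the previous paragraph. Injectivity of $f_j$ is used precisely there, to separate the columns indexed by outcomes at $y_j$. That injectivity is available because $|A_j|\le m$.
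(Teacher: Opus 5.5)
Your proposal is correct and follows essentially the same route as the paper. Both arguments glue the stars $K_{\{x_1,\dots,x_{n_1}\},\{y_j\}}$ along the nodes $x_1,\dots,x_{n_1}$, take $\tilde A_j=\{p(v,f_j(v)) : v\in A_j\}$, and apply Corollary~\ref{cor:VertexAi}; your column-separation argument showing $\Vsupp(q)=\tilde A_j$ for a strictly positive combination $q$ spells out the face-generation step that the paper only asserts from the injectivity of $f_j$ and the determinism of the star's vertices.
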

\begin{proof}
For every $1\leq i \leq n_2$, let $X^{(i)}:=K_{\set{x_1,\dots,x_{n_1}},\set{y_i}}$, and let $W$ be the simplicial set of the nodes $x_1,\dots,x_{n_1}$. 
Then, $K_{n_1,n_2}$ is obtained by gluing $X^{(1)},\dots,X^{(n_2)}$ along $W$. Define
$$
\tilde{A}_i=\set{ p(v,f_i(v)): ~ v \in A_i}.
$$
Since each $f_i$ is injective and all the vertices of the polytope of simplicial distributions on a star graph are deterministic distributions (Example~\ref{ex:tree}), it follows that each
$\tilde{A}_i$ 
%is a closed set of vertices in 
generates a face in
$\Dist(X^{(i)},{m})$. Moreover, the restriction of $s_i^*$ (see Diagram (\ref{eq:fibfromcolim})) to $\tilde{A_i}$
induces a bijection 
%from 
$\tilde{A}_i\to A_i$. Consequently, we may rewrite Equation~\eqref{eq:p|Xi} as
$$
p|_{X^{(j)}}=\sum_{q\in \tilde{A}_j }\alpha_{q|_W} q.
$$
By Corollary \ref{cor:VertexAi}, we conclude that $p$ is a vertex.
\end{proof}

\begin{ex}
We apply Corollary~\ref{cor:VertexKn1n2} to detect a vertex in the scenario
$(K_{3,3},\Delta_{\ZZ_3})$.
Each of the following sets is an affinely independent set of product-simplex vertices in $\RR^{3\cdot 3}$:
$$
A_1=\set{\begin{pmatrix}
1 \\
0 \\
0 \\
1 \\
0 \\
0\\
1 \\
0 \\
0
\end{pmatrix},
\begin{pmatrix}
0 \\
0 \\
1 \\
0 \\
0 \\
1 \\
0 \\
0 \\
1
\end{pmatrix},
\begin{pmatrix}
0 \\
1 \\
0 \\
0 \\
1 \\
0 \\
0 \\
1 \\
0
\end{pmatrix}
}, \;
 A_2=\set{\begin{pmatrix}
0 \\
1 \\
0 \\
0 \\
0 \\
1 \\
1 \\
0 \\
0
\end{pmatrix},
\begin{pmatrix}
1 \\
0 \\
0 \\
0 \\
1 \\
0 \\
0 \\
0 \\
1
\end{pmatrix},
\begin{pmatrix}
0 \\
0 \\
1 \\
1 \\
0 \\
0 \\
0 \\
1 \\
0
\end{pmatrix}
}, \;\;
A_3=\set{\begin{pmatrix}
0 \\
1 \\
0 \\
1 \\
0 \\
0 \\ 
0 \\
0 \\
1
\end{pmatrix},
\begin{pmatrix}
1 \\
0 \\
0 \\
0 \\
0 \\
1 \\
0 \\
1 \\
0
\end{pmatrix},
\begin{pmatrix}
0 \\
0 \\
1 \\
0 \\
1 \\
0 \\
1 \\
0 \\
0
\end{pmatrix}
}. 
$$
The intersection $\Conv(A_1) \cap \Conv(A_2) \cap \Conv(A_3)$ consists of exactly one point, with all convex combination coefficients equal
to $\frac{1}{3}$. 

For each $1 \leq i \leq 3$, define a map
$f_i\colon A_i \to \ZZ_3$ sending the first vertex of $A_i$ to $0$, the second
to $1$, and the third to $2$ (according to the order listed above).
Using Equation~\eqref{eq:p|Xi}, we define the vertex
$$
p\colon K_{\set{x_1,x_2,x_3},\set{y_1,y_2,y_3}} \to D(\Delta_{\ZZ_3}),
$$
as follows:
$$
p_{\sigma_{x_1,y_1}}=p_{\sigma_{x_2,y_1}}=p_{\sigma_{x_3,y_1}}=p_{\sigma_{x_2,y_3}}=p_{\sigma_{x_3,y_2}}=\begin{pmatrix}
\frac{1}{3} & 0 & 0  \\
0 & 0 & \frac{1}{3} \\
0 & \frac{1}{3} & 0 \\
\end{pmatrix},
$$
$$
p_{\sigma_{x_1,y_2}}=p_{\sigma_{x_1,y_3}}=\begin{pmatrix}
0 & \frac{1}{3} & 0  \\
\frac{1}{3} & 0 & 0 \\
0 & 0 & \frac{1}{3} \\
\end{pmatrix}, \;\;
p_{\sigma_{x_2,y_2}}=p_{\sigma_{x_3,y_3}}=\begin{pmatrix}
0 & 0 & \frac{1}{3}  \\
0 & \frac{1}{3} & 0 \\
\frac{1}{3} & 0 & 0 \\
\end{pmatrix}.
$$
\end{ex}

Note that Example \ref{ex:A1A2AA4} does not satisfy the conditions of Corollary \ref{cor:VertexKn1n2}, since $|A_i|=4 >|\ZZ_3|=3$. Hence it falls outside the scope of the general construction given in this proposition.

We conclude this section by giving an example showing that, in a bipartite scenario, using a decomposition as described in Remark \ref{rem:gluingstars}, {the first condition of Corollary~\ref{cor:VertofGluing} is not necessary for a point to be a vertex.}
\begin{ex}\label{ex:biparcounterex}
Consider the graph $K_{2,5}=K_{\set{x_1,x_2},\set{y_1,y_2,y_3,y_4,y_5}}$. It is obtained by gluing the following stars:
$$
X^{(i)}=K_{\set{x_1,x_2},\set{y_i}}, \;\; 1\leq i \leq 5.
$$
We define the simplicial distribution $p\colon K_{2,5} \to D(\Delta_{\ZZ_4})$ as follows: 
$$
p|_{X^{(1)}}=\langle \begin{pmatrix}%
\frac{1}{8} & 0 & 0 & 0   \\
\frac{1}{8} & 0 & 0 & 0   \\
0 & \frac{1}{8} & 0 & \frac{1}{8} \\
0 & 0 & \frac{1}{2} & 0
\end{pmatrix},%
\begin{pmatrix}%
\frac{1}{4} & 0 & 0 & 0 \\
0 & \frac{1}{8} & 0 & 0  \\
0 & 0 & \frac{1}{2} & 0  \\
0 & 0 & 0  & \frac{1}{8}
\end{pmatrix}\rangle
$$
$$
p|_{X^{(2)}}=\langle \begin{pmatrix}%
0 & 0 & \frac{1}{8} & 0   \\
0 & 0 & \frac{1}{8} & 0   \\
0 & 0 & \frac{1}{4} & 0   \\
\frac{1}{4} & \frac{1}{8} & 0 & \frac{1}{8} 
\end{pmatrix},%
\begin{pmatrix}%
\frac{1}{4} & 0 & 0 & 0 \\
0 & \frac{1}{8} & 0 & 0  \\
0 & 0 & \frac{1}{2} & 0  \\
0 & 0 & 0  & \frac{1}{8}
\end{pmatrix} \rangle
$$
$$
p|_{X^{(3)}}=\langle \begin{pmatrix}%
0 & \frac{1}{8} & 0 & 0 \\
0 & 0 & 0 & \frac{1}{8} \\
0 & 0 & \frac{1}{4} & 0   \\
\frac{1}{4} & 0 & \frac{1}{4} & 0 
\end{pmatrix},%
\begin{pmatrix}%
\frac{1}{4} & 0 & 0 & 0 \\
0 & \frac{1}{8} & 0 & 0  \\
0 & 0 & \frac{1}{2} & 0  \\
0 & 0 & 0  & \frac{1}{8}
\end{pmatrix} \rangle
$$
$$
p|_{X^{(4)}}= \langle \begin{pmatrix}%
0 & 0 & \frac{1}{8} & 0  \\
\frac{1}{8} & 0 & 0 & 0\\
0 & 0 & \frac{1}{4} & 0   \\
0 & \frac{1}{4} & 0 & \frac{1}{4} 
\end{pmatrix},%
\begin{pmatrix}%
\frac{1}{8} & \frac{1}{8} & 0 & 0 \\
0 & 0 & \frac{1}{8} & 0  \\
0 & 0 & \frac{1}{4} & \frac{1}{4}  \\
0 & \frac{1}{8} & 0 & 0
\end{pmatrix} \rangle
$$
$$
p|_{X^{(5)}}=\langle \begin{pmatrix}%
0 & 0 & 0 & \frac{1}{8}  \\
0 & \frac{1}{8} & 0 & 0 \\
\frac{1}{4} & 0 & 0 & 0   \\
0 & 0 & \frac{1}{2} & 0 
\end{pmatrix},%
\begin{pmatrix}%
\frac{1}{4} & 0 & 0 & 0 \\
0 & \frac{1}{8} & 0 & 0  \\
0 & 0 & \frac{1}{2} & 0  \\
0 & 0 & 0  & \frac{1}{8}
\end{pmatrix} \rangle
$$
As before, we denote by $E_{i,j}$ the $m \times m$ matrix with entry $1$ in position $(i,j)$ and zeros elsewhere. {Then}, the vertex supports are:
\[
\Vsupp\!\left(p\big|_{X^{(1)}}\right)
=
\Big\{
\langle E_{0,0}, E_{0,0} \rangle,
\langle E_{1,0}, E_{0,0} \rangle,
\langle E_{2,1}, E_{1,1} \rangle,
\langle E_{3,2}, E_{2,2} \rangle,
\langle E_{2,3}, E_{3,3} \rangle
\Big\}
\]
\[
\Vsupp\!\left(p\big|_{X^{(2)}}\right)
=
\Big\{
\langle E_{3,0}, E_{0,0} \rangle,
\langle E_{3,1}, E_{1,1} \rangle,
\langle E_{0,2}, E_{2,2} \rangle,
\langle E_{1,2}, E_{2,2} \rangle,
\langle E_{2,2}, E_{2,2} \rangle,
\langle E_{3,3}, E_{3,3} \rangle
\Big\}
\]
\[
\Vsupp\!\left(p\big|_{X^{(3)}}\right)
=
\Big\{
\langle E_{3,0}, E_{0,0} \rangle,
\langle E_{0,1}, E_{1,1} \rangle,
\langle E_{2,2}, E_{2,2} \rangle,
\langle E_{3,2}, E_{2,2} \rangle,
\langle E_{1,3}, E_{3,3} \rangle
\Big\}
\]
\[
\Vsupp\!\left(p\big|_{X^{(4)}}\right)
=
\Big\{
\begin{aligned}
&\langle E_{1,0}, E_{0,0} \rangle,
 \langle E_{3,1}, E_{0,1} \rangle,
 \langle E_{3,1}, E_{3,1} \rangle,\\
&\langle E_{0,2}, E_{1,2} \rangle,
 \langle E_{0,2}, E_{2,2} \rangle,
 \langle E_{2,2}, E_{1,2} \rangle,\\
&\langle E_{2,2}, E_{2,2} \rangle,
 \langle E_{3,3}, E_{2,3} \rangle
\end{aligned}
\Big\}
\]
\[
\Vsupp\!\left(p\big|_{X^{(5)}}\right)
=
\Big\{
\langle E_{2,0}, E_{0,0} \rangle,
\langle E_{1,1}, E_{1,1} \rangle,
\langle E_{3,2}, E_{2,2} \rangle,
\langle E_{0,3}, E_{3,3} \rangle
\Big\}.
\]
Given 
$$
q^{(1)}=\langle \begin{pmatrix}%
\alpha_1 & 0 & 0 & 0  \\
\beta_1 & 0 & 0 & 0 \\
0 & \gamma_1 & 0 & \epsilon_1   \\
0 & 0 & \delta_1 & 0 
\end{pmatrix}, \begin{pmatrix}%
\alpha_1+\beta_1 & 0 & 0 & 0  \\
0 & \gamma_1 & 0 & 0 \\
0 & 0 & \delta_1 & 0   \\
0 & 0 & 0 & \epsilon_1 
\end{pmatrix} \rangle  \in \Conv(\Vsupp\!\left(p\big|_{X^{(1)}}\right)),
$$
$$
q^{(2)}=\langle \begin{pmatrix}%
0 & 0 & \gamma_2 & 0  \\
0 & 0 & \delta_2 & 0 \\
0 & 0 & \epsilon_2  & 0 \\
\alpha_2 & \beta_2 & 0 & \lambda_2  
\end{pmatrix}, \begin{pmatrix}%
\alpha_2 & 0 & 0 & 0  \\
0 & \beta_2 & 0 & 0 \\
0 & 0 & \gamma_2 +\delta_2+ \epsilon_2 & 0   \\
0 & 0 & 0 & \lambda_2 
\end{pmatrix} \rangle  \in \Conv(\Vsupp\!\left(p\big|_{X^{(2)}}\right)),
$$
$$
q^{(3)}=\langle \begin{pmatrix}%
0 & \beta_3 & 0 & 0  \\
0 & 0 & 0 & \epsilon_3 \\
0 & 0 & \gamma_3  & 0 \\
\alpha_3 & 0 & \delta_3 & 0 
\end{pmatrix}, \begin{pmatrix}%
\alpha_3 & 0 & 0 & 0  \\
0 & \beta_3 & 0 & 0 \\
0 & 0 & \gamma_3 +\delta_3 & 0   \\
0 & 0 & 0 & \epsilon_3 
\end{pmatrix} \rangle  \in \Conv(\Vsupp\!\left(p\big|_{X^{(3)}}\right)),
$$
$$
q^{(4)}=\langle \begin{pmatrix}%
0 &  0 & \delta_4+\epsilon_4 & 0  \\
\alpha_4 & 0 & 0 & 0 \\
0 & 0 & \lambda_4+\mu_4  & 0  \\
0 & \beta_4+\gamma_4 & 0 & \kappa_4  
\end{pmatrix}, \begin{pmatrix}%
\alpha_4 & \beta_4 & 0 & 0  \\
0 & 0 & \delta_4 +\lambda_4 & 0 \\
0 & 0 & \epsilon_4 + \mu_4 & \kappa_4    \\
0 & \gamma_4  & 0 & 0
\end{pmatrix} \rangle  \in \Conv(\Vsupp\!\left(p\big|_{X^{(4)}}\right)),
$$
$$
q^{(5)}=\langle \begin{pmatrix}%
0 & 0 & 0 & \delta_5  \\
0 & \beta_5 & 0 & 0 \\
\alpha_5 & 0 & 0  & 0 \\
0 & 0 & \gamma_5 & 0 
\end{pmatrix}, \begin{pmatrix}%
\alpha_5 & 0 & 0 & 0  \\
0 & \beta_5 & 0 & 0 \\
0 & 0 & \gamma_5 & 0   \\
0 & 0 & 0 & \delta_5 
\end{pmatrix} \rangle  \in \Conv(\Vsupp\!\left(p\big|_{X^{(5)}}\right)),
$$
such that the restrictions on the nodes $x_1$ and $x_2$ coincide; that is, {for every $0\leq k \leq 3$ and $1 \leq i,j \leq 5$, 
the sum the $k$ row of the first matrix in $q^{(i)}$ is equal to the sum of the $k$ row of the first matrix in $q^{(j)}$. Same thing happens for the second matrices.} 
%\ak{or we can write the old sentence without adding unnecessary indices:  "the sums of the corresponding
%rows of the matrices on the left are equal, and the sums of the corresponding rows of the matrices
%on the right are also equal."}
%the sums of the corresponding rows of the matrices on the left are equal, and the sums of the corresponding rows of the matrices on the right are also equal. 
Hence, we obtain the following equations:
$$
\begin{aligned}
\alpha_1
&= \gamma_2
= \beta_3
= \delta_4 + \epsilon_4
= \delta_5, \\[4pt]
\beta_1
&= \delta_2
= \epsilon_3
= \alpha_4
= \beta_5, \\[4pt]
\gamma_1 + \epsilon_1
&= \epsilon_2
= \gamma_3
= \lambda_4 + \mu_4
= \alpha_5, \\[4pt]
\delta_1
&= \alpha_2 + \beta_2 + \lambda_2
= \alpha_3 + \delta_3
= \beta_4 + \gamma_4 + \kappa_4
= \gamma_5,
\end{aligned}
$$
and 
$$
\begin{aligned}
\alpha_1+\beta_1
&= \alpha_2
= \alpha_3
= \alpha_4 + \beta_4
= \alpha_5, \\[4pt]
\gamma_1
&= \beta_2
= \beta_3
= \delta_4+\lambda_4
= \beta_5, \\[4pt]
\delta_1
&= \gamma_2 +\delta_2 +\epsilon_2
=\gamma_3 +\delta_3 
= \epsilon_4 + \mu_4+\kappa_4
= \gamma_5, \\[4pt]
\epsilon_1
&=\lambda_2
= \epsilon_3
= \gamma_4 
=\delta_5,
\end{aligned}
$$
where $\alpha_1+\beta_1+\gamma_1+\delta_1+\epsilon_1=1$. This leads to 
%the following:
%
$$
\begin{aligned}
\alpha_1=&\beta_1=\gamma_1=\epsilon_1=\beta_2=\gamma_2=\delta_2=\lambda_2=\beta_3=\epsilon_3=\alpha_4=\beta_4=\gamma_4=\beta_5=\delta_5=\frac{1}{8},  \\
& \alpha_2=\epsilon_2=\alpha_3=\gamma_3=\delta_3=\kappa_4=\alpha_5=\frac{1}{4}, \;\; \delta_1=\gamma_5=\frac{1}{2}, \\
& \delta_4 + \epsilon_4=\frac{1}{8}, \;\; \lambda_4 + \mu_4 =\frac{1}{4}, \;\; \delta_4+\lambda_4=\frac{1}{8}, 
\;\; \epsilon_4+ \mu_4=\frac{1}{4}. \\
\end{aligned}
$$
We obtain that $(q^{(1)},q^{(2)},q^{(3)},q^{(4)},q^{(5)})=p$. Hence, by Corollary~\ref{cor:insteadoflimit}, $p$ is a vertex. 
%Anyway, 
Note that
$$
A_4
=
\set{\, q|_{\set{x_1,x_2}} :\; q \in 
\Vsupp\!\left(p\big|_{X^{(4)}}\right) \,}
=
\left\lbrace
\begin{aligned}
&(e_1,e_0)^T,\ (e_3,e_0)^T,\ (e_3,e_3)^T,\ (e_0,e_1)^T,\ (e_0,e_2)^T,\\
&(e_2,e_1)^T,\ (e_2,e_2)^T,\ (e_3,e_2)^T,\ (e_2,e_3)^T
\end{aligned}
\right\rbrace.
$$
We then have the following linear dependence among the elements of $A_4$:
$$
\begin{aligned}
&0\cdot (e_1,e_0)^T
+ 0\cdot (e_3,e_0)^T
+ 0\cdot (e_3,e_3)^T
+ 1\cdot (e_0,e_1)^T \\
&\quad - 1\cdot (e_0,e_2)^T
- 1\cdot (e_2,e_1)^T
+ 1\cdot (e_2,e_2)^T
+ 0\cdot (e_3,e_2)^T
+ 0\cdot (e_2,e_3)^T
= 0
\end{aligned}
$$
\end{ex}
%
%
%\coc{Better to avoid $\vec{0}$ notation. I prefer $0$.}

%

%\section{Vertices on gluing of cycle scenarios}\label{sec:GluCycles}
%

%\begin{cihan}{purple}{to be revised}
%In this section, we study vertices on two types of measurement scenarios:  
%1)  the wedge of $1$-circles, and  
%2)  the gluing of edges through two points.  
%In both cases, we characterize vertices in terms of sets of vectors in the cube satisfying specific geometric properties.
%We then reformulate these geometric properties in algebraic and graph-theoretic terms.

%\end{cihan}

\subsection{Rose graph} 
\label{subsec:wedgeofqcycles}

%{The rose graph provides a fundamental example of a one-dimensional measurement space obtained by gluing several cycles along a single common node. In this section, we formulate the corresponding special cases of results from Section~\ref{sec:Glumeas}. In particular, using the description of the vertices on a cycle (Example~\ref{ex:cycle}), we derive a sufficient condition for the extremality of simplicial distributions on the rose graph in terms of geometric properties of certain associated vectors, called average points. Finally, we present an example showing that this condition is sufficient but not necessary.}

The rose graph is a one-dimensional measurement space obtained by gluing circles along a single common node. Using the known classification of vertices on a single circle, we derive a sufficient condition for extremality of distributions on this graph. We also show by example that this condition is not necessary. In the next subsection, we sharpen these results to obtain a complete characterization of vertices.

\begin{defn}\label{def:rosegra}
For $n\geq 1$, we define the simplicial set $R_n$ as the wedge product $\vee_{i=1}^n C^{(1)}$ of $n$ copies of the $1$-circle consisting of the generating $1$-simplex $\sigma_i$ for each $1\leq i \leq n$ and the unique node $\ast$. This underlying graph of this simplicial set will be referred to as the \emph{rose graph}, and will be denoted by $R_n$ as well. See Figure \ref{fig:rose-r4}.
\end{defn}

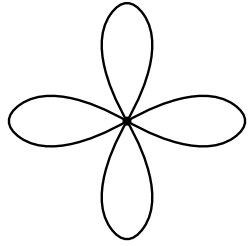
\begin{figure}[ht]
\centering
\begin{tikzpicture}[x=1cm,y=1cm]
  \coordinate (O) at (0,0);

  % top petal
  \draw[line width=0.9pt]
    (O) .. controls (-0.08,0.14) and (-0.55,0.95) .. (-0.22,1.42)
        .. controls (-0.08,1.62) and ( 0.08,1.62) .. ( 0.22,1.42)
        .. controls ( 0.55,0.95) and ( 0.08,0.14) .. (O);

  % right petal
  \draw[line width=0.9pt]
    (O) .. controls (0.14, 0.08) and (0.95, 0.55) .. (1.42, 0.22)
        .. controls (1.62, 0.08) and (1.62,-0.08) .. (1.42,-0.22)
        .. controls (0.95,-0.55) and (0.14,-0.08) .. (O);

  % bottom petal
  \draw[line width=0.9pt]
    (O) .. controls ( 0.08,-0.14) and ( 0.55,-0.95) .. ( 0.22,-1.42)
        .. controls ( 0.08,-1.62) and (-0.08,-1.62) .. (-0.22,-1.42)
        .. controls (-0.55,-0.95) and (-0.08,-0.14) .. (O);

  % left petal
  \draw[line width=0.9pt]
    (O) .. controls (-0.14,-0.08) and (-0.95,-0.55) .. (-1.42,-0.22)
        .. controls (-1.62,-0.08) and (-1.62, 0.08) .. (-1.42, 0.22)
        .. controls (-0.95, 0.55) and (-0.14, 0.08) .. (O);

  \node[circle,fill=black,inner sep=1.2pt] at (O) {};
\end{tikzpicture}
\caption{The rose graph \(R_4\), obtained by gluing four circles at a single vertex.}
\label{fig:rose-r4}
\end{figure}

To motivate our approach we begin with two examples. First we consider the case of two outcomes, where the vertices are well-understood. Then we consider the case of three outcomes, where we will see that the vertex structure is more complex.

\begin{ex}\label{ex:contverwedgez2}
The contextual vertices of $\Dist(R_n,{2})$ are precisely those simplicial distributions $p$ such that 
the restriction to each individual circle is either $p_{+}$ or $p_{-}$ (see Equation~\eqref{eq:p +-}), 
and at least one circle is assigned $p_{-}$. This result is proved in \cite[Part (2) of Proposition 12]{kharoof2023topological}.
\end{ex}

%The polytope $\sDist(\vee_{i=1}^n C^{(1)},\Delta_{\ZZ_2})$ is described in \cite[Proposition 11]{kharoof2023topological}. 
%In particular, the contextual vertices are characterized as follows:

%\begin{pro}\label{pro:contverwedgez2}
%(\!\!\cite[Part (2) of Proposition 12]{kharoof2023topological})
%The contextual vertices of 
%$\sDist(\vee_{i=1}^n C^{(1)},\Delta_{\ZZ_2})$ are precisely those simplicial distributions $p$ %such that 
%the restriction to each individual circle is either $p_{+}$ or $p_{-}$ (see Equation~\eqref{eq:p +-}), 
%and at least one circle is assigned $p_{-}$.%
%\end{pro}

Therefore, for two outcomes, {any} contextual vertex on the wedge of circles {must} exhibit a cycle distribution (Definition~\ref{ex:cycle}) on at least one of the circles. As we illustrate in the following example, this pattern does not hold when the number of outcomes is greater than two.
We illustrate this with the following example.
%This result shows that, when there are only two outcomes, any contextual vertex on the wedge of circles must exhibit 
%a cycle distribution (Definition~\ref{Def:CccyclicDis}) on at least one of the circles. 
%However, this pattern no longer holds when the number of outcomes is greater than two. 
%We illustrate this with the following example:

%
%
\begin{ex}\label{ex:Firstex}
Let 
%$p \colon C^{(1)}\vee C^{(1)} \to \Delta_{\ZZ_3}$ 
$p \colon R_2 \to D(\Delta_{\ZZ_3})$ 
be defined by
$$
p_{\sigma_1}=\begin{pmatrix}%
0 & \frac{1}{3} & 0  \\
\frac{1}{3} & 0 & 0 \\
0 & 0 & \frac{1}{3} \\  
\end{pmatrix},~~%
p_{\sigma_2}=\begin{pmatrix}%
0 & 0 & \frac{1}{3} \\
0 & \frac{1}{3} & 0 \\
\frac{1}{3} & 0 & 0  \\ 
\end{pmatrix}.
$$
We will show that this is a contextual vertex using Corollary \ref{cor:insteadoflimit}. First, we examine the vertex supports:
$$
\Vsupp(p_{\sigma_1})=\set{[0,1],[2]}, \;\; \Vsupp(p_{\sigma_2})=\set{[0,2],[1]}.
$$
Now, suppose we take convex combinations 
$\alpha [0,1]+(1-\alpha)[2]$ and $\beta[0,2]+(1-\beta)[1]$, and assume they coincide at the shared node (i.e., they match at the unique vertex $\ast$). This implies that the following equation holds: 
$$
\begin{pmatrix}
\frac{\alpha}{2} \\
\frac{\alpha}{2} \\
1-\alpha
\end{pmatrix}
=
\begin{pmatrix}
\frac{\beta}{2} \\
1-\beta \\
\frac{\beta}{2}
\end{pmatrix}.
$$
So $\beta=\alpha=2-2\beta$, which means that $\alpha=\beta=\frac{2}{3}$. Note that 
$$
\frac{2}{3} [0,1]+ \frac{1}{3} [2]=\begin{pmatrix}%
0 & \frac{1}{3} & 0  \\
\frac{1}{3} & 0 & 0 \\
0 & 0 & \frac{1}{3} \\  
\end{pmatrix}, \;
\text{and} 
\;\; 
\frac{2}{3}[0,2]+\frac{1}{3}[1]=\begin{pmatrix}%
0 & 0 & \frac{1}{3} \\
0 & \frac{1}{3} & 0 \\
\frac{1}{3} & 0 & 0  \\ 
\end{pmatrix}.
$$
This confirms that the convex combinations match with the original definitions of $p_{\sigma_1}$ and $p_{\sigma_2}$, respectively. 
Thus by Corollary~\ref{cor:insteadoflimit},
$p$ is a contextual vertex, even though it does not restrict to a cycle distribution on any of the two circles.
\end{ex}

To handle cases with more than two outcomes, we introduce the following definition.

\begin{defn}
An \emph{average point} in the $m$-cube is a uniform average of some of the following vertices of the $m$-cube:
$$
\begin{pmatrix}
1\\
0\\
0 \\
\vdots \\
0 \\
0
\end{pmatrix},
 \begin{pmatrix}
0 \\
1\\
0 \\
\vdots \\
0 \\
0
\end{pmatrix},
 \dots, 
 \begin{pmatrix}
0\\
0 \\
\vdots \\
0 \\
1 \\
0
\end{pmatrix} 
 ,
 \begin{pmatrix}
0\\
0 \\
\vdots \\
0 \\
0 \\
1
\end{pmatrix}.
$$
%We will denote the set of average points in the $m$-cube by $\Av(m)$. 
\end{defn}
\begin{ex}
The following points are average points in the $6$-cube:
$$
\begin{pmatrix}
1/4\\
1/4\\
0 \\
1/4\\
0 \\
1/4
\end{pmatrix},
 \begin{pmatrix}
0\\
0\\
1/2 \\
0 \\
1/2 \\
0
\end{pmatrix},
 \begin{pmatrix}
1/6\\
1/6 \\
1/6 \\
1/6 \\
1/6 \\
1/6
\end{pmatrix} .
$$
\end{ex}

Given a $k$-order cycle distribution on $(C^{(1)},\Delta_{\ZZ_m})$ (equivalently, on $R_1=C^{(1)}$), its restriction to the unique node of the circle is an average point in the $m$-cube. For example, the restriction of $[0,3,2] \in \Dist(C^{(1)},{5})$ to the node of the circle is the following average point in the
$5$-cube: 
$$
\begin{pmatrix}
1/3 \\
0  \\
{1/3} \\
{1/3} \\
0
\end{pmatrix}.
$$

%\begin{defn}
%For a cyclic permutation $\mu \in \cyc(m)$ (see Definition \ref{def:cyc(m)}), we define the average point $v(\mu)$ in the $m$-cube as: 
%$$
%v(\mu)_i=\begin{cases}
%\frac{1}{|O(\mu)|} & \text{if} \; i \in O(\mu) \\
%0   & \text{otherwise}
%\end{cases}
%$$
%where $O(\mu)$ as defined in Corollary \ref{cor:verticesofC1}.
%\end{defn}
%
%\begin{ex}
%For $[0,3,2] \in \sDist(C^{(1)},\Delta_{\ZZ_5})$, the vector $v([0,3,2])$ is 
%
%$$
%\begin{pmatrix}
%1/3 \\
%0  \\
%{1/3} \\
%{1/3} \\
%0
%\end{pmatrix}
%$$
%
%\end{ex}
%
%
%
%
Next, we state the corresponding special cases of Corollaries \ref{cor:VertofGluing} and \ref{cor:VertexAi} to be used in later sections. {In this case, 
$X^{(i)}=C^{(1)}$ for every $1\leq i \leq n$, and $W=\ast$.}  
%

%\coc{better as corollary}

\begin{cor}\label{cor:VertWedge}
Let $p \in \Dist(R_n ,{m})$. For every $1 \leq i \leq m$, we define the set of average points in the $m$-cube 
$A_i=\set{q|_{\ast}: \; q \in \Vsupp(p_{\sigma_i})}$. If these sets satisfy
\begin{itemize}
    \item for every \(1 \leq i \leq n\), $|A_i|=|\Vsupp(p_{\sigma_i})|$ and the set \(A_i\) is affinely independent; 
    \item the intersection \(\bigcap_{i=1}^n \Conv(A_i)\) consists of a single point;
\end{itemize}
then $p$ is a vertex.
\end{cor}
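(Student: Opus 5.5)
This is the special case of Corollary~\ref{cor:VertofGluing} obtained by writing $R_n$ as a gluing, so the plan is to exhibit that decomposition and check the hypotheses line up. First realise $R_n$ as the colimit of Diagram~\eqref{eq:colimit} with $X^{(i)}=C^{(1)}$ the circle generated by $\sigma_i$ and $W=\ast$ the single node, included into each $X^{(i)}$ as its unique vertex. Gluing the $n$ circles along $\ast$ gives $R_n=X^{(1)}\cup_\ast\cdots\cup_\ast X^{(n)}$. Hence
\[
\Dist(R_n,m)\cong \Dist(C^{(1)},m)\times_{\Dist(\ast,m)}\cdots\times_{\Dist(\ast,m)}\Dist(C^{(1)},m),
\]
where $\Dist(\ast,m)$ is the probability simplex in $\RR^m$ and each structure map is restriction to the node.

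Next identify the data. The restriction $p|_{X^{(i)}}$ is determined by the single matrix $p_{\sigma_i}$, so $\Vsupp(p|_{X^{(i)}})=\Vsupp(p_{\sigma_i})$. The set $A_i$ in the statement is then exactly the set $\{q|_W : q\in\Vsupp(p|_{X^{(i)}})\}$ of Corollary~\ref{cor:VertofGluing}. By Example~\ref{ex:cycle} (case $n=1$), every vertex of $\Dist(C^{(1)},m)$ is a cycle distribution $p(\mu)$ with $\mu\in\on{cyc}(m)$. Its restriction to $\ast$ (the row sums of $p(\mu)_\sigma$) is the uniform average of the $e_a$ over the entries $a$ of $\mu$. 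This justifies calling the $A_i$ sets of average points, although the proof itself does not need that fact.

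With these identifications, the two bulleted hypotheses are verbatim those of Corollary~\ref{cor:VertofGluing}. So $p$ is a vertex of $\sDist(R_n,\Delta_{\ZZ_m})\cong\Dist(R_n,m)$.

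The only point needing care is bookkeeping rather than mathematics. The index in the statement should range over $1\le i\le n$ (the circles), not $1\le i\le m$. One should also confirm that the isomorphism of Example~\ref{ex:one dim sdist} is compatible with restriction to the node, which holds because the node distribution is the row (equivalently column) sum of $p_{\sigma_i}$.
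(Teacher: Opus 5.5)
Your proposal is correct and matches the paper, which obtains this statement as the special case of Corollary~\ref{cor:VertofGluing} with $X^{(i)}=C^{(1)}$ and $W=\ast$. You carry out exactly that identification, and you are also right that the index in the definition of the sets $A_i$ should run over $1\le i\le n$ rather than $1\le i\le m$.
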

%
%

%\coc{better as corollary}

\begin{cor}\label{cor:VertWedgee}
Given sets $A_1,\dots,A_{n}$ of average points in the $m$-cube, satisfying
\begin{itemize}
    \item for every $1 \leq i \leq n$, the set $A_i$ is affinely independent;
    \item the intersection $\cap_{i=1}^n \Conv A_i$ consists of a single point;
    \item for every $1 \leq i \leq n$, there exists a face generating $\tilde{A}_i$ contained in $\Dist(C^{(1)},{m})$, such that $s_i^*|_{\tilde{A}_i}$ is a bijection $\tilde{A}_i\to A_i$. 
\end{itemize}
We define the simplicial distribution
$p \colon R_n\to D(\Delta_{\ZZ_m})$ by specifying its restriction to the $j$\textsuperscript{th} circle as
$$
\sum_{q \in \tilde{A}_j }\alpha_{q|_{\ast}} q,
$$
where $\alpha_{q|_{\ast}}$ is the coefficient of $q|_{\ast}$ in the convex combination that represents the unique element in $\cap_{i=1}^n \Conv A_i$, expressed using the elements of $A_j$. Then $p$ is a vertex.
\end{cor}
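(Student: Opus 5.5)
This is a direct specialization of Corollary~\ref{cor:VertexAi}, itself an instance of Proposition~\ref{pro:VertexAimain}. First we realize $R_n$ as the gluing
\[
R_n = X^{(1)}\cup_W\cdots\cup_W X^{(n)},
\]
where $X^{(i)}\cong C^{(1)}$ is the circle generated by $\sigma_i$ and $W=\ast$ is the single node. By Definition~\ref{def:colim} and Diagram~\eqref{eq:colimit}, this gives the fiber product
\[
\Dist(R_n,m)\cong \Dist(C^{(1)},m)\times_{\Dist(\ast,m)}\cdots\times_{\Dist(\ast,m)}\Dist(C^{(1)},m),
\]
with structure maps $s_i^*$ restricting a circle distribution to its node. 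Under this identification the base $\Dist(\ast,m)=\sDist(\ast,\Delta_{\ZZ_m})$ is the probability simplex in $\RR^m$. So the given average points in the $m$-cube lie in the base polytope, as the hypotheses of Corollary~\ref{cor:VertexAi} require.

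Next we verify the three hypotheses of Corollary~\ref{cor:VertexAi} with $A_i$ and $\tilde A_i$ as given.
\begin{itemize}
\item Affine independence of each $A_i$ is assumed.
\item Uniqueness of the point $u\in\bigcap_i\Conv(A_i)$ is assumed.
\item Each $\tilde A_i$ generates a face of $\Dist(C^{(1)},m)$, and $s_i^*|_{\tilde A_i}$ is a bijection onto $A_i$; both are assumed.
\end{itemize}

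The distribution $p$ in the statement is defined circle by circle as $\sum_{q\in\tilde A_j}\alpha_{q|_\ast}q$. These restrictions agree on the common node, since each equals $u$. That is exactly the first step in the proof of Proposition~\ref{pro:VertexAimain}, so $p$ is a well-defined element of $\Dist(R_n,m)$. Corollary~\ref{cor:VertexAi} then gives that $p$ is a vertex.

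The only point needing any care is well-definedness of the coefficients $\alpha_{q|_\ast}$. These are uniquely determined because $A_j$ is affinely independent. They are nonnegative because $u\in\Conv(A_j)$, by the uniqueness in part~(1) of Definition~\ref{def:affinindep}. The bijection $\tilde A_j\to A_j$ then transports them unambiguously to $\tilde A_j$. Beyond this there is no real obstacle: the argument is pure bookkeeping of the identification $\Dist(X,m)\cong\sDist(X,\Delta_{\ZZ_m})$.
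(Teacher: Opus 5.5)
Your proposal is correct and follows the paper's route exactly: the paper states this result as the special case of Corollary~\ref{cor:VertexAi} (and hence of Proposition~\ref{pro:VertexAimain}) with $X^{(i)}=C^{(1)}$ and $W=\ast$, and gives no further argument. Your additional checks are correct bookkeeping that the paper leaves implicit. These are that average points lie in the base polytope over $\ast$, and that the coefficients $\alpha_{q|_\ast}$ are well defined and nonnegative by affine independence.
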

%
%
%
%
%In the next example we show how Proposition \ref{pro:VertWedgee} is helpful to produce vertices in the scenario of wedge of cycles.  
\begin{ex}
Each of the following sets is an affinely independent set of average points in the $5$-cube: 
$$
A_1=\set{\begin{pmatrix}
1/2\\
1/2\\
0 \\
0 \\
0 
\end{pmatrix},
 \begin{pmatrix}
1/3\\
0\\
1/3 \\
0 \\
1/3 
\end{pmatrix},
 \begin{pmatrix}
0\\
0 \\
1/2 \\
1/2 \\
0
\end{pmatrix} } \;\; \text{and}\;\;  
A_2=\set{\begin{pmatrix}
1/3\\
1/3\\
1/3 \\
0 \\
0 
\end{pmatrix},
 \begin{pmatrix}
1/3\\
0\\
1/3 \\
1/3 \\
0 
\end{pmatrix},
 \begin{pmatrix}
1/5\\
1/5 \\
1/5 \\
1/5 \\
1/5
\end{pmatrix} }.
$$
Moreover, the intersection of $\Conv(A_1)$ and $\Conv(A_2)$ contains exactly one point:
$$
\frac{4}{11}\begin{pmatrix}
1/2\\
1/2\\
0 \\
0 \\
0 
\end{pmatrix}+
\frac{3}{11} \begin{pmatrix}
1/3\\
0\\
1/3 \\
0 \\
1/3 
\end{pmatrix}+
\frac{4}{11} \begin{pmatrix}
0\\
0 \\
1/2 \\
1/2 \\
0
\end{pmatrix} =   
\frac{3}{11} \begin{pmatrix}
1/3\\
1/3\\
1/3 \\
0 \\
0 
\end{pmatrix}+
\frac{3}{11} \begin{pmatrix}
1/3\\
0\\
1/3 \\
1/3 \\
0 
\end{pmatrix}+
\frac{5}{11} \begin{pmatrix}
1/5\\
1/5 \\
1/5 \\
1/5 \\
1/5
\end{pmatrix}.
$$
We define 
$$
\tilde{A}_1=\set{[0,1], [0,2,4],[2,3] } \;\; \text{and}\;\;  
\tilde{A}_2=\set{[0,1,2],[0,2,3],[0,1,2,3,4] }.
$$
Both 
generate faces
%are closed sets of vertices 
in $\Dist(C^{(1)},{5})$, 
%such that 
%the restrictions to the \co{unique} node 
%of the circle 
%induce
and we have induced bijections $\tilde{A_1}\to A_1$ and $\tilde{A_2}\to A_2$.
By Corollary \ref{cor:VertWedgee}, the simplicial distribution $p\colon R_2 \to D(\Delta_{\ZZ_5})$ defined by 
$$
p_{\sigma_1}=\frac{4}{11}[0,1]+ \frac{3}{11}[0,2,4] + \frac{4}{11}[2,3], \quad p_{\sigma_2}=\frac{3}{11}[0,1,2]+\frac{3}{11}[0,2,3]+\frac{5}{11}[0,1,2,3,4]
$$
is a vertex. 
%Note that
 %$$
%\Vsupp(p_{\sigma_1})=\set{[0,1],[0,2,4],[2,3]} \;\; \text{and} \;\; 
%\Vsupp(p_{\sigma_2})=\set{[0,1,2],[0,2,3],[0,1,2,3,4]}.
%$$
%
%
%
%The sets $A_1$ and $A_2$ can produce $31$ additional vertices by changing the internal order of the cycles. For example:
%$$
%<\frac{4}{11}[0,1]+ \frac{3}{11}[0,4,2] + \frac{4}{11}[2,3],\frac{3}{11}[0,2,1]+\frac{3}{11}[0,2,3]+\frac{5}{11}[0,3,1,4,2]>
%$$
%
\end{ex} 

\begin{rem}
Changing the internal order of the cycle distributions of $\tilde{A}_2$ in the previous example can lead to a simplicial distribution that is not a vertex. For example, if we choose 
$\tilde{A}_2=\set{[0,1,2],[0,2,3],[0,1,3,2,4]}$, and define $p \colon R_2 \to D(\Delta_{\ZZ_5})$ by
$$
p_{\sigma_1} = \frac{4}{11}[0,1] + \frac{3}{11}[0,2,4] + \frac{4}{11}[2,3], \quad
p_{\sigma_2} = \frac{3}{11}[0,1,2] + \frac{3}{11}[0,2,3] + \frac{5}{11}[0,1,3,2,4]
$$
then we cannot apply Corollary \ref{cor:VertWedgee} to conclude that $p$ is a vertex, since the set $\tilde{A}_2$ does not generate a face in $\Dist(C^{(1)},{5})$. This is because $[2,3]$ also appears in $\Vsupp(p_{\sigma_2})$. In fact, we can show that $p$ is not a vertex. The simplicial distribution 
$q \colon R_2 \to D(\Delta_{\ZZ_5})$ defined by
\[
q_{\sigma_1} = [2,3], \quad q_{\sigma_2} = [2,3],
\]
satisfies $q \preceq p$. Therefore, by Corollary \ref{cor:vert=min}, $p$ is not a vertex.
\end{rem}

Here is another example of a vertex that can be explained using Corollary \ref{cor:VertWedgee}.

\begin{ex}\label{ex:Thirdex}
Let $p\colon R_3 \to D(\Delta_{\ZZ_4})$ be defined by
$$
p_{\sigma_1}=\begin{pmatrix}%
\frac{1}{5} & \frac{1}{5} & 0  & 0\\
0 & 0 & \frac{1}{5} & 0 \\
\frac{1}{5} & 0 & 0 & \frac{1}{10} \\
0 & 0 & \frac{1}{10} & 0
\end{pmatrix},~~%
p_{\sigma_2}=\begin{pmatrix}%
0 & 0 & \frac{3}{10} &  \frac{1}{10} \\
0 & \frac{1}{5} & 0 & 0  \\
\frac{3}{10} & 0 & 0 & 0   \\
\frac{1}{10} & 0 & 0 & 0   
\end{pmatrix},~~%
p_{\sigma_3}=\begin{pmatrix}%
0 & \frac{1}{10} &  \frac{3}{10} & 0 \\\frac{1}{10} & 0 & 0 &  \frac{1}{10}  \\
\frac{3}{10} & 0 & 0 & 0   \\
0 & \frac{1}{10} & 0 & 0  
\end{pmatrix}.
$$
Then, $p$ is a contextual vertex.
\end{ex}
We will end this section by giving an example of a vertex that does not satisfy 
the affine independence condition of Corollary~\ref{cor:VertWedge}. We will sharpen this vertex detection result to a full characterization in the next subsection.

\begin{ex}\label{ex:counterex}
Let 
$
p \colon R_3 \to D(\Delta_{\mathbb{Z}_4})
$
be defined by
$$
p_{\sigma_1}=
\begin{pmatrix}
\frac16 & \frac16 & 0 & 0 \\
0 & 0 & \frac16 & \frac16 \\
0 & \frac16 & 0 & 0 \\
\frac16 & 0 & 0 & 0
\end{pmatrix},\qquad
p_{\sigma_2}=
\begin{pmatrix}
0 & \frac13 & 0 & 0 \\
\frac13 & 0 & 0 & 0 \\
0 & 0 & 0 & \frac16 \\
0 & 0 & \frac16 & 0
\end{pmatrix},
$$
$$
p_{\sigma_3}=
\begin{pmatrix}
0 & \frac16 & 0 & \frac16 \\
\frac16 & 0 & \frac16 & 0 \\
\frac16 & 0 & 0 & 0 \\
0 & \frac16 & 0 & 0
\end{pmatrix}.
$$
The vertex supports are given by
$$
\Vsupp(p_{\sigma_1})=\{[0],[0,1,3],[1,2]\},\qquad
\Vsupp(p_{\sigma_2})=\{[0,1],[2,3]\},
$$
$$
\Vsupp(p_{\sigma_3})=\{[0,1],[0,1,2],
[0,3,1],[0,3,1,2]\}.
$$
Suppose
$$
\begin{pmatrix}
\alpha_1 & \frac{\alpha_2}{3} & 0 & 0 \\
0 & 0 & \frac{\alpha_3}{2} & \frac{\alpha_2}{3} \\
0 & \frac{\alpha_3}{2} & 0 & 0 \\
\frac{\alpha_2}{3} & 0 & 0 & 0
\end{pmatrix}
\in \Conv(\Vsupp(p_{\sigma_1})),
$$
$$
\begin{pmatrix}
0 & \frac{\beta_1}{2} & 0 & 0 \\
\frac{\beta_1}{2} & 0 & 0 & 0 \\
0 & 0 & 0 & \frac{\beta_2}{2} \\
0 & 0 & \frac{\beta_2}{2} & 0
\end{pmatrix}
\in \Conv(\Vsupp(p_{\sigma_2})),
$$
$$
\begin{pmatrix}
0 & \frac{\gamma_1}{2}+\frac{\gamma_2}{3} & 0 & \frac{\gamma_3}{3}+\frac{\gamma_4}{4} \\
\frac{\gamma_1}{2}+\frac{\gamma_3}{3} & 0 & \frac{\gamma_2}{3}+\frac{\gamma_4}{4} & 0 \\
\frac{\gamma_2}{3}+\frac{\gamma_4}{4} & 0 & 0 & 0 \\
0 & \frac{\gamma_3}{3}+\frac{\gamma_4}{4} & 0 & 0
\end{pmatrix}
\in \Conv(\Vsupp(p_{\sigma_3})),
$$
with
$$
\alpha_1+\alpha_2+\alpha_3=1,\qquad
\beta_1+\beta_2=1,\qquad
\gamma_1+\gamma_2+\gamma_3+\gamma_4=1.
$$
Assume that the restrictions to unique node $\ast$ coincide, i.e., the sums of the
$i$\textsuperscript{th} rows of the three matrices are equal for each $i=1,2,3,4$.  
Then the following system of equations must hold:
$$
\begin{aligned}
\alpha_1+\frac{\alpha_2}{3}
&= \frac{\beta_1}{2}
 = \frac{\gamma_1}{2}+\frac{\gamma_2}{3}
   +\frac{\gamma_3}{3}+\frac{\gamma_4}{4}, \\[4pt]
\frac{\alpha_3}{2}+\frac{\alpha_2}{3}
&= \frac{\beta_1}{2}
 = \frac{\gamma_1}{2}+\frac{\gamma_3}{3}
   +\frac{\gamma_2}{3}+\frac{\gamma_4}{4}, \\[4pt]
\frac{\alpha_3}{2}
&= \frac{\beta_2}{2}
 = \frac{\gamma_2}{3}+\frac{\gamma_4}{4}, \\[4pt]
\frac{\alpha_2}{3}
&= \frac{\beta_2}{2}
 = \frac{\gamma_3}{3}+\frac{\gamma_4}{4}.
\end{aligned}
$$
This system has a unique solution given by
$$
\begin{aligned}
&\alpha_1=\frac16,\qquad
\alpha_2=\frac12,\qquad
\alpha_3=\frac13,\\[6pt]
&\beta_1=\frac23,\qquad
\beta_2=\frac13,\\[6pt]
&\gamma_1=\frac{1-2x}{3},\qquad
\gamma_2=x,\qquad
\gamma_3=x,\qquad
\gamma_4=\frac{2-4x}{3},
\end{aligned}
$$
where  $0\le x\le \frac12$.
In this case we get that
$$
\frac16[0] + \frac12 [0,1,3]
+ \frac13 [1,2] =p_{\sigma_1}, \; \;
\frac23[0,1] + \frac13 [2,3] =p_{\sigma_2}
$$
$$
\frac{1-2x}{3}[0,1] +
x[0,1,2] + 
x[0,3,1] +
\frac{2-4x}{3} [0,3,1,2]=
\begin{pmatrix}
0 & \frac16 & 0 & \frac16 \\
\frac16 & 0 & \frac16 & 0 \\
\frac16 & 0 & 0 & 0 \\
0 & \frac16 & 0 & 0
\end{pmatrix}=p_{\sigma_3}.
$$
By Corollary~\ref{cor:insteadoflimit}, we conclude that $p$ is vertex. Although, the affine indendence condition fails: We have 
$$
A_3=\set{q|_{\ast}: \; q \in \Vsupp(p_{\sigma_3})} =
\set{ \begin{pmatrix}
1/2\\
1/2 \\
0 \\
0 
\end{pmatrix},\begin{pmatrix}
1/3\\
1/3 \\
1/3 \\
0 
\end{pmatrix},\begin{pmatrix}
1/3\\
1/3 \\
0 \\
1/3 
\end{pmatrix},\begin{pmatrix}
1/4\\
1/4 \\
1/4 \\
1/4 
\end{pmatrix}},
$$
and 
$$
2 \cdot\begin{pmatrix}
1/2\\
1/2 \\
0 \\
0 
\end{pmatrix}
-3 \cdot \begin{pmatrix}
1/3\\
1/3 \\
1/3 \\
0 
\end{pmatrix}
-3\cdot \begin{pmatrix}
1/3\\
1/3 \\
0 \\
1/3 
\end{pmatrix} 
+4 \cdot \begin{pmatrix}
1/4\\
1/4 \\
1/4 \\
1/4 
\end{pmatrix}
=\begin{pmatrix}
0 \\
0 \\
0 \\
0
\end{pmatrix}.
$$
This means that the elements of $A_3$ are affinely dependent. 
%Consequently, the vertex $p$
%does not satisfy the conditions of Proposition~\ref{pro:VertWedge}.
\end{ex}

\subsection{Dipole graph}\label{subsec:dipolegraph}

%{We study vertices in the case where the measurement space is obtained by gluing edges at their endpoints. This space will be called a dipole graph. We characterize its vertices in terms of sets of coordinate vectors in the cube satisfying specific geometric properties. This leads to a similar characterization of vertices on the rose graph discussed in the previous subsection.}

The dipole graph is obtained by gluing edges along both of their
endpoints. We give a complete characterization of the vertices of the polytope of distributions on this graph. We then use the collapsing method to
transfer this characterization to the rose graph.

\begin{defn}\label{defn:dipole}
For $n\geq 1$, we define the simplicial set $D_n$ as the colimit 
$$
D_n:= \Delta^1 \cup_{\Delta^0 \sqcup \Delta^0} \Delta^1 \cup_{\Delta^0 \sqcup \Delta^0} \cdots \cup_{\Delta^0 \sqcup \Delta^0} \Delta^1,
$$
obtained by gluing $n$ copies of $\Delta^1$, whose generating simplices are denoted by $\tau_1,\dots,\tau_n$, along their endpoints. We will refer to $D_n$ as the \emph{dipole graph}; see Figure \ref{fig:dipole-d5}.
\end{defn}

\begin{figure}[ht]
\centering
\begin{tikzpicture}[x=1cm,y=1cm]
  \coordinate (A) at (0,0);
  \coordinate (B) at (3.6,0);

  \draw[line width=0.9pt] (A) to[bend left=60] (B);
  \draw[line width=0.9pt] (A) to[bend left=30] (B);
  \draw[line width=0.9pt] (A) -- (B);
  \draw[line width=0.9pt] (A) to[bend right=30] (B);
  \draw[line width=0.9pt] (A) to[bend right=60] (B);

  \node[circle,fill=black,inner sep=1.2pt] at (A) {};
  \node[circle,fill=black,inner sep=1.2pt] at (B) {};
\end{tikzpicture}
\caption{The dipole graph \(D_5\).}
\label{fig:dipole-d5}
\end{figure}
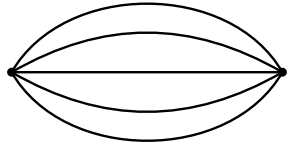

We begin with a particular example studied in \cite[Section 5.3]{kharoof2024extremal}.

%We begin with an example of a vertex that cannot be detected from the wedge.

%
\begin{ex}\label{ex:Forthex}
%Let $X=\left(\sqcup_{i=1}^4 \Delta^1\right) / \sim$ be the measurement space that generated by four edges $\tau_1$, $\tau_2$, $\tau_3$, and $\tau_4$, all glued at their two endpoints. 
The simplicial distribution  $p\colon D_4 \to D(\Delta_{\ZZ_3})$ defined by 
$$
p_{\tau_1}=\begin{pmatrix}%
\frac{1}{4} & 0 & \frac{1}{4} \\
0 & \frac{1}{4} & 0 \\
\frac{1}{4} & 0 & 0 \\
\end{pmatrix},~~%
p_{\tau_2}=\begin{pmatrix}%
\frac{1}{4} & \frac{1}{4} & 0 \\
\frac{1}{4} & 0 & 0 \\
0 & 0 & \frac{1}{4} \\ 
\end{pmatrix}~~%
p_{\tau_3}=\begin{pmatrix}%
\frac{1}{4} & 0 & \frac{1}{4} \\
\frac{1}{4} & 0 & 0 \\
0 & \frac{1}{4} & 0 \\
\end{pmatrix},~~%
p_{\tau_4}=\begin{pmatrix}%
0 & \frac{1}{4} & \frac{1}{4}\\
\frac{1}{4} & 0 & 0 \\
\frac{1}{4} & 0 & 0 \\ 
\end{pmatrix}~~%
$$
is a contextual vertex.
% (see the proof in \cite[Section 5.3]{kharoof2024extremal}). 
%Note that no one of the distributions above is 
%a collapsing distribution (Definition \ref{def:CollapDist}). Therefore, by Proposition \ref{pro:CollapDist1} the given vertex cannot be detected from a wedge of $1$-cycles.
\end{ex}

Our main result is a complete characterization of the vertices of distributions on the dipole graph $D_n$ with arbitrary number of outcomes. This characterization is given in terms of the product-simplex vertices in $\RR^{2m}$, where $m$ is the number of outcomes.

\begin{thm}\label{thm:afinoneintersect}
There is a one-to-one correspondence between the vertices of 
\(\Dist(D_n, {m})\) and the collections of sets \(A_1, A_2, \dots, A_n\) of product-simplex vertices in $\RR^{2m}$, satisfying
\begin{itemize}
    \item for every \(1 \leq i \leq n\), the set \(A_i\) is affinely independent;
    \item the intersection $\bigcap_{i=1}^n \Conv(A_i)$ consists of a single point $u$, and 
    for every $1 \leq i \leq n$, the unique affine representation  
    \[
    u = \sum_{x\in A_i} \alpha_x x
    \]
    has nonzero coefficients $\alpha_x$ for all $x \in A_i$. 
\end{itemize}
Here, for a vertex $p \in \Dist(D_n, {m})$, the associated sets are defined by 
\begin{equation}\label{eq:Adef}
A_i=\set{q|_{\Delta^{0}\sqcup \Delta^{0}} :\; q\in \Vsupp(p_{\tau_i})}=\set{(e_a,e_b)^T:\; p^{ab}_{\tau_i}\neq 0}.    
\end{equation}
\end{thm}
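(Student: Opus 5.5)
The plan is to view $D_n$ as the gluing of $n$ copies of $\Delta^1$ along $W=\Delta^0\sqcup\Delta^0$. By Definition~\ref{defn:dipole}, $\Dist(D_n,m)$ is then the fiber product of $n$ copies of $\Dist(\Delta^1,m)$ over $\Dist(\Delta^0\sqcup\Delta^0,m)=\Delta^{m-1}\times\Delta^{m-1}$. The polytope $\Dist(\Delta^1,m)$ is the simplex of $m\times m$ probability matrices. Its vertices are the matrices $E_{a,b}$, and they form an affinely independent set.

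\textbf{Setup and the forward direction.} For any $q\in\Dist(\Delta^1,m)$ we have $\Vsupp(q)=\{E_{a,b}: q^{ab}\neq 0\}$. Since these vertices lie in a simplex, every subset of them is affinely independent and generates a face. Restriction to the boundary sends $E_{a,b}$ to $(e_a,e_b)^T$, and this assignment is injective. This justifies the second equality in \eqref{eq:Adef} and shows $|A_i|=|\Vsupp(p_{\tau_i})|$.

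Now let $p$ be a vertex. Corollary~\ref{cor:VertofGluihalfsecondir}(1) applies to every $j$, because each $\Vsupp(p_{\tau_j})$ is affinely independent; so every $A_j$ is affinely independent. Corollary~\ref{cor:VertofGluihalfsecondir}(2) then says the intersection is a single point $u$ with a nonzero-coefficient affine representation. By affine independence that representation is unique, so its coefficients are the unique ones. Both conditions of the theorem therefore hold.

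\textbf{The inverse map.} Given sets $A_1,\dots,A_n$ satisfying the two conditions, let $\tilde A_i=\{E_{a,b}:(e_a,e_b)^T\in A_i\}$. Each $\tilde A_i$ is a face-generating set, and restriction is a bijection $\tilde A_i\to A_i$. Corollary~\ref{cor:VertexAi} then produces a vertex $p$ with $p_{\tau_i}=\sum\alpha_{(e_a,e_b)}E_{a,b}$. Explicitly, $p^{ab}_{\tau_i}$ is the coefficient of $(e_a,e_b)^T$ in the unique representation of $u$ via $A_i$.

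The coefficients are nonnegative because $u\in\Conv(A_i)$ and the representation is unique, and they are nonzero by hypothesis. Hence $\{(a,b):p^{ab}_{\tau_i}\neq0\}$ corresponds exactly to $A_i$, so the sets attached to $p$ by \eqref{eq:Adef} are the original $A_i$.

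\textbf{Bijectivity.} Starting instead from a vertex $p$, its $A_i$ determine $u=p|_W$. The unique coefficients are exactly the entries $p^{ab}_{\tau_i}$, since $p_{\tau_i}=\sum p^{ab}_{\tau_i}E_{a,b}$ restricts to $u$. So the construction above rebuilds $p$, and the two maps are mutually inverse.

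The main subtlety is the nonzero-coefficient condition. It is what makes the map from sets to vertices injective, because dropping it would let the vertex support shrink. It is also needed to identify the convex representation provided by Proposition~\ref{pro:VertofGluihalfsecondirmain} with the unique affine one, which is where affine independence is used. The rest is bookkeeping with the simplex structure of $\Dist(\Delta^1,m)$.
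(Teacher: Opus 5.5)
Your proposal is correct and follows essentially the same route as the paper: the forward direction applies both parts of Corollary~\ref{cor:VertofGluihalfsecondir}, using that $\Vsupp(p_{\tau_i})$ lies in the simplex $\Dist(\Delta^1,m)\cong D(\ZZ_m\times\ZZ_m)$. The converse builds $p$ from the unique nonzero coefficients and invokes Corollary~\ref{cor:VertexAi} with $\tilde A_i=\{E_{a,b}:(e_a,e_b)^T\in A_i\}$, which the paper gives as its alternative to Corollary~\ref{cor:VertofGluing}; your explicit check that the two assignments are mutually inverse spells out what the paper simply calls clear.
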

%
%{and $|A_j|=|\Vsupp(p|_{X^{(j)}})|$.} 
\begin{proof}
Consider a vertex $p\in \Dist(D_n, {m})$. For every $1\leq i \leq n$, we choose $A_i$ to be the set of product-simplex vertices 
in $\RR^{2m}$ defined by Equation (\ref{eq:Adef}). 
Since $\Dist(\Delta^1,{m})$ 
is isomorphic to $D(\ZZ_m\times \ZZ_m)$, we conclude that the set $\Vsupp(p_{\tau_i})$ is affinely independent. In addition, obviously $|A_i|=|\Vsupp(p_{\tau_i})|$. Therefore, by part (1) of Corollary \ref{cor:VertofGluihalfsecondir}, the set $A_i$ is affinely independent. By 
part (2) of Corollary \ref{cor:VertofGluihalfsecondir}, the 
intersection $\cap_{i=1}^n \Conv A_i$ consists of a single point. {Furthermore, for each \(1 \leq j \leq n\), this point admits a unique 
affine representation in terms of the elements of \(A_j\), and hence all corresponding coefficients are nonzero.}
Conversely, given a collection of sets $A_1,A_2,\dots,A_n$ of product-simplex vertices in $\RR^{2m}$ satisfying the above conditions, we define 
$$
p\colon D_n \to D(\Delta_{\ZZ_m}),
$$ 
as follows: for each $\tau_i$, let 
$p^{ab}_{\tau_i}$ be the coefficient of 
the product-simplex vertex $(e_a,e_b)^T$ when writing the unique point in 
$\cap_{i=1}^n \Conv A_i$ as a convex combination of the elements in $A_i$. {By the assumption this coefficient is nonzero,}
so $\Vsupp(p_{\tau_i})=\set{E_{a,b}:\; (e_a,e_b)^T \in A_i }$. Therefore
$$
\set{q|_{\Delta^{0}\sqcup \Delta^{0}}: \; q \in \Vsupp(p_{\tau_i})}=A_i.
$$
By Corollary~\ref{cor:VertofGluing}, we obtain that $p$ is a vertex. Alternatively, this follows from Corollary \ref{cor:VertexAi}, where the set $\set{E_{a,b}:\; (e_a,e_b)^T \in A_i }$ serves as $\tilde{A}_i$, which generates a face. 
It is clear that the two processes---moving from a vertex to the corresponding collection of $n$ sets $A_1, A_2, \dots, A_n$ of product-simplex vertices, and the reverse---are inverses of each other.
%Suppose that $\tilde{p}$ is in the limit of Diagram (\ref{dia:limitVsupp})...........
\end{proof}
\begin{ex}\label{ex:simplistforwedgetwopoints}
%Let $X=\Delta^1\sqcup \Delta^1\sqcup \Delta^1\sqcup \Delta^1/\sim$ 
%be the measurement space formed by gluing four edges $\tau_1,\tau_2,\tau_3,\tau_4$, 
%at both of their endpoints. 
Define the sets:  
$$
A_1=\set{\begin{pmatrix}
1 \\
0 \\
0 \\
1 \\
0 \\
0
\end{pmatrix},
\begin{pmatrix}
0 \\
1 \\
0 \\
1 \\
0 \\
0
\end{pmatrix},
\begin{pmatrix}
0 \\
0 \\
1 \\
0 \\
1 \\
0
\end{pmatrix},
\begin{pmatrix}
0 \\
0 \\
1 \\
0 \\
0 \\
1
\end{pmatrix}
}, \;\; A_2=\set{\begin{pmatrix}
1 \\
0 \\
0 \\
0 \\
0 \\
1
\end{pmatrix},
\begin{pmatrix}
0 \\
1 \\
0 \\
1 \\
0 \\
0
\end{pmatrix},
\begin{pmatrix}
0 \\
0 \\
1 \\
1 \\
0 \\
0
\end{pmatrix},
\begin{pmatrix}
0 \\
0 \\
1 \\
0 \\
1 \\
0
\end{pmatrix}
},
$$
$$
A_3=\set{\begin{pmatrix}
1 \\
0 \\
0 \\
1 \\
0 \\
0
\end{pmatrix},
\begin{pmatrix}
0 \\
1 \\
0 \\
0 \\
1 \\
0
\end{pmatrix},
\begin{pmatrix}
0 \\
1 \\
0 \\
0 \\
0 \\
1
\end{pmatrix},
\begin{pmatrix}
0 \\
0 \\
1 \\
1 \\
0 \\
0
\end{pmatrix}
}, \;\; A_4=\set{\begin{pmatrix}
1 \\
0 \\
0 \\
0 \\
1 \\
0
\end{pmatrix},
\begin{pmatrix}
0 \\
1 \\
0 \\
1 \\
0 \\
0
\end{pmatrix},
\begin{pmatrix}
0 \\
1 \\
0 \\
0 \\
0 \\
1
\end{pmatrix},
\begin{pmatrix}
0 \\
0 \\
1 \\
1 \\
0 \\
0
\end{pmatrix}
}.
$$
Every set of the four sets above is an affinely independent set of 
product-simplex vertices in $\RR^{2 \cdot 3}$. Furthermore,
$|\cap_{i=1}^4 \Conv(A_i)|=1$ as shown:
$$
\frac{1}{5}
\begin{pmatrix}
1 \\
0 \\
0 \\
1 \\
0 \\
0
\end{pmatrix}
+ \frac{2}{5}
\begin{pmatrix}
0 \\
1 \\
0 \\
1 \\
0 \\
0
\end{pmatrix}
+ \frac{1}{5}
\begin{pmatrix}
0 \\
0 \\
1 \\
0 \\
1 \\
0
\end{pmatrix} 
+ \frac{1}{5}
\begin{pmatrix}
0 \\
0 \\
1 \\
0 \\
0 \\
1
\end{pmatrix}=\begin{pmatrix}
1/5 \\
2/5 \\
2/5 \\
3/5 \\
1/5 \\
1/5
\end{pmatrix}
=\frac{1}{5}
\begin{pmatrix}
1 \\
0 \\
0 \\
0 \\
0 \\
1
\end{pmatrix}
+ \frac{2}{5}
\begin{pmatrix}
0 \\
1 \\
0 \\
1 \\
0 \\
0
\end{pmatrix}
+ \frac{1}{5}
\begin{pmatrix}
0 \\
0 \\
1 \\
1 \\
0 \\
0
\end{pmatrix} 
+ \frac{1}{5}
\begin{pmatrix}
0 \\
0 \\
1 \\
0 \\
1 \\
0
\end{pmatrix}
$$
$$
\frac{1}{5}\begin{pmatrix}
1 \\
0 \\
0 \\
1 \\
0 \\
0
\end{pmatrix}
+\frac{1}{5}\begin{pmatrix}
0 \\
1 \\
0 \\
0 \\
1 \\
0
\end{pmatrix}
+\frac{1}{5}
\begin{pmatrix}
0 \\
1 \\
0 \\
0 \\
0 \\
1
\end{pmatrix}
+\frac{2}{5}
\begin{pmatrix}
0 \\
0 \\
1 \\
1 \\
0 \\
0
\end{pmatrix}
=\begin{pmatrix}
1/5 \\
2/5 \\
2/5 \\
3/5 \\
1/5 \\
1/5
\end{pmatrix}
=\frac{1}{5}
\begin{pmatrix}
1 \\
0 \\
0 \\
0 \\
1 \\
0
\end{pmatrix}+
\frac{1}{5}
\begin{pmatrix}
0 \\
1 \\
0 \\
1 \\
0 \\
0
\end{pmatrix}+
\frac{1}{5}
\begin{pmatrix}
0 \\
1 \\
0 \\
0 \\
0 \\
1
\end{pmatrix}+
\frac{2}{5}
\begin{pmatrix}
0 \\
0 \\
1 \\
1 \\
0 \\
0
\end{pmatrix}
$$
The corresponding contextual vertex is $p\colon {D_4} \to D(\Delta_{\ZZ_3})$, where:  
$$
p_{\tau_1}=\begin{pmatrix}%
\frac{1}{5} & 0 & 0 \\
 \frac{2}{5} & 0 & 0  \\
0 & \frac{1}{5} &  \frac{1}{5} \\
\end{pmatrix},~~%
p_{\tau_2}=\begin{pmatrix}%
0 & 0 &  \frac{1}{5} \\
\frac{2}{5} & 0 & 0 \\
 \frac{1}{5} &  \frac{1}{5} & 0\\ 
\end{pmatrix},~~%
p_{\tau_3}=\begin{pmatrix}%
\frac{1}{5} & 0 & 0 \\
0 &  \frac{1}{5} & \frac{1}{5} \\
 \frac{2}{5} & 0  & 0 \\
\end{pmatrix},~~%
p_{\tau_4}=\begin{pmatrix}%
0 & \frac{1}{5} & 0 \\
\frac{1}{5} & 0 &  \frac{1}{5} \\
\frac{2}{5} & 0 & 0 \\ 
\end{pmatrix}~~%
$$
\end{ex}

For the vertex that given in Example \ref{ex:Forthex}, the corresponding sets are:
$$
A_1=\set{\begin{pmatrix}
1 \\
0 \\
0 \\
1 \\
0 \\
0
\end{pmatrix},
\begin{pmatrix}
1 \\
0 \\
0 \\
0 \\
0 \\
1
\end{pmatrix},
\begin{pmatrix}
0 \\
1 \\
0 \\
0 \\
1 \\
0
\end{pmatrix}
\begin{pmatrix}
0 \\
0 \\
1 \\
1 \\
0 \\
0
\end{pmatrix}
}, \;
 A_2=\set{\begin{pmatrix}
1 \\
0 \\
0 \\
1 \\
0 \\
0
\end{pmatrix},
\begin{pmatrix}
1 \\
0 \\
0 \\
0 \\
1 \\
0
\end{pmatrix},
\begin{pmatrix}
0 \\
1 \\
0 \\
1 \\
0 \\
0
\end{pmatrix}
\begin{pmatrix}
0 \\
0 \\
1 \\
0 \\
0 \\
1
\end{pmatrix}
}
$$
$$
A_3=\set{\begin{pmatrix}
1 \\
0 \\
0 \\
1 \\
0 \\
0
\end{pmatrix},
\begin{pmatrix}
1 \\
0 \\
0 \\
0 \\
0 \\
1
\end{pmatrix},
\begin{pmatrix}
0 \\
1 \\
0 \\
1 \\
0 \\
0
\end{pmatrix}
\begin{pmatrix}
0 \\
0 \\
1 \\
0 \\
1 \\
0
\end{pmatrix}
}, \;
 A_4=\set{\begin{pmatrix}
1 \\
0 \\
0 \\
0 \\
1 \\
0
\end{pmatrix},
\begin{pmatrix}
1 \\
0 \\
0 \\
0 \\
0 \\
1
\end{pmatrix},
\begin{pmatrix}
0 \\
1 \\
0 \\
1 \\
0 \\
0
\end{pmatrix}
\begin{pmatrix}
0 \\
0 \\
1 \\
1 \\
0 \\
0
\end{pmatrix}
}
$$

We now apply Theorem~\ref{thm:afinoneintersect} to characterize the vertices of the polytope of distributions on the rose graph $R_n$ (see Definition \ref{def:rosegra}). For this, we need an important topological technique used in the theory of simplicial distributions, called the \emph{collapsing method}
(see \cite[Section 4.1]{kharoof2023homotopical}).  We recall this method in the special case of collapsing edges, which is the case relevant to our discussion. See Figures \ref{fig:collpsing K33} and \ref{fig:collpsing2 K33} for examples of such collapsings.

%{To do that we need an important topological technique used in the theory of simplicial distributions, called the collapsing method 
%(see \cite[Section 4.1]{kharoof2023homotopical}). 
%In fact, we need the case where edges are collapsed.}

\begin{defn}\label{def:collapsing}
Let $X$ be a one-dimensional simplicial set, let $\sigma$ be an edge in $X$, {and let 
$s$ be the unique map from $\Delta^1$ to $\Delta^0$}. The quotient $X/\sigma$, obtained by collapsing the edge $\sigma$, is defined as the pushout of the diagram:
%Let $X$ be a $1$-dimensional measurement space, and let $\sigma$ be an edge in $X$. The measurement space $X/\sigma$, obtained by collapsing the edge $\sigma$, is defined as the pushout of the diagram:
%
$$
\begin{tikzcd}[column sep=huge,row sep=large] 
\Delta^1
\arrow[r,"\sigma"] \arrow[d,"s"'] & X \arrow[d] \\
\Delta^0 \arrow[r] & X/\sigma 
\end{tikzcd}
$$
A quotient $X/T$, obtained from $X$ by collapsing a finite set of edges $T$, is called a \emph{collapsing of $X$}. The projection map $\pi \colon X \to X/T$ is called 
the \emph{collapsing map}. Given a collapsing map $\pi: X \to X'$, a simplicial distribution $p\colon X \to D(Y)$ is called a \emph{collapsed simplicial distribution} if it lies in the image of the injective map $\pi^\ast \colon \sDist(X',Y) \to \sDist(X,Y)$ (see Definition \ref{def:pushforward}). 
\end{defn}
%

%For instance, Figure \ref{fig:collpsing K33} describes 

%we collapse the blue edges in the left graph to get a wedge of four $1$-circles.
%
%

%
%\end{ex}
%
%

%
%
%\begin{defn}\label{def:collpseddist}
%Let $\pi\colon X \to X'$ be a collapsing map. A simplicial distribution $p\colon X \to D(Y)$ is called a \emph{collapsed simplicial distribution} if it lies in the image of the injective map
%$\pi^\ast \colon \sDist(X',Y) \to \sDist(X,Y)$ (see Definition \ref{def:pushforward}). 
%\end{defn}
%

When the outcome space is $\Delta_{\ZZ_m}$, we can characterize collapsed simplicial distributions in a simple way.

\begin{defn}\label{def:CollapDist}
A distribution $Q \in D(\ZZ_m^2)$ is called a \emph{collapsed distribution} if $Q^{ab}=0$ whenever $a \neq b$.  
\end{defn}

\begin{pro}\label{pro:CollapDist}
 Let $\pi\colon X \to X'$ be a collapsing map obtained by collapsing $\sigma_1,\dots,\sigma_k$ in $X$.
 \begin{enumerate}
    \item A simplicial distribution $p\colon X' \to D(\Delta_{\ZZ_m})$ is collapsed if and only if $p_{\sigma_i}$ is a collapsed distribution for $1 \leq i \leq k$.
      \item A simplicial distribution $p\colon X' \to D(Y)$ is a contextual vertex if and only if $\pi^\ast(p)$ is a contextual vertex. 
 \end{enumerate} 
\end{pro}

%\begin{pro}\label{pro:CollapDist1}
%Let $X$ be a measurement space, and let $X'$ be the space obtained by collapsing the edges $\sigma_1,\dots,\sigma_k$. Then a simplicial distribution 
%$p\colon X \to D(\Delta_{\ZZ_m})$ is collapsed if and only if each $p_{\sigma_i}$ is a collapsed distribution for $1 \leq i \leq k$.
%\end{pro}
%
%
%\begin{pro}\label{pro:CollapDist2}
%Given a collapsing map $\pi\colon X \to X'$. A simplicial distribution $p\colon X' \to D(Y)$ is a contextual vertex if and only if $\pi^\ast(p)$ is a contextual vertex.   
%\end{pro}
%
\begin{proof}
{Part (1) is clear, and part (2)} follows from \cite[parts $(1)$ and $(3)$ in Theorem 4.4]{kharoof2023homotopical}.    
\end{proof}

%\coc{We may need to point out the relevant reference for part (1) above.}
%\ak{We don't have a reference. However, it is immediate, so I stated "it is clear" in the proof. If you want I can add a short proof.}

%\coc{better as a corollary}

%
\begin{cor}\label{cor:afinoneintersect2222}
There is a one-to-one correspondence between the vertices of 
\(\Dist(R_n, {m})\) and the collections of sets \(A_1, A_2, \dots, A_n\) of product-simplex vertices in $\RR^{2m}$, satisfying 
\begin{itemize}
    \item for every $1 \leq k \leq n$, the set $A_k$ is affinely independent;
    \item the intersection
    \begin{equation}\label{eq:convsum=1}
            \bigcap_{k=1}^n \Conv(A_k)
    \;\cap\;
    \left\{
    \begin{pmatrix}
    \alpha_1 \\
    \vdots \\
    \alpha_m \\
    \rule{0.6em}{0.4pt} \\
    \alpha_1 \\
    \vdots \\
    \alpha_m
    \end{pmatrix}
    :\; \sum_{i=1}^m \alpha_i = 1
    \right\}
        \end{equation} 
    consists of a single point, and for this point the coefficient of every $x \in A_i$ in its affine representation is nonzero.
\end{itemize}
For a vertex $p \in \Dist(R_n, {m})$, the corresponding sets are defined by
\begin{equation}\label{eq:Adef2222}
A_k=\set{(e_a,e_b)^T :\; p^{ab}_{\sigma_k}\neq 0}.
\end{equation}
\end{cor}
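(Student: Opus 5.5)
The plan is to realize $R_n$ as a collapsing of the dipole graph $D_{n+1}$ and transfer Theorem~\ref{thm:afinoneintersect} through $\pi^\ast$. Let $\pi\colon D_{n+1}\to D_{n+1}/\tau_{n+1}\cong R_n$ collapse the edge $\tau_{n+1}$, with $\pi(\tau_k)=\sigma_k$ for $1\le k\le n$. By Proposition~\ref{pro:CollapDist}(1), $\pi^\ast$ identifies $\Dist(R_n,m)$ with the face of $\Dist(D_{n+1},m)$ consisting of distributions whose $\tau_{n+1}$-matrix is diagonal. This set is cut out by the coordinate hyperplanes $p^{ab}_{\tau_{n+1}}=0$, $a\neq b$, so it is a face. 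Therefore $p$ is a vertex of $\Dist(R_n,m)$ if and only if $\pi^\ast(p)$ is a vertex of $\Dist(D_{n+1},m)$; this uses only that vertices of a face are exactly the vertices of the ambient polytope lying in it. Part (2) of Proposition~\ref{pro:CollapDist} covers the contextual case, and deterministic distributions are handled directly.

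Next, apply Theorem~\ref{thm:afinoneintersect} to $\pi^\ast(p)$. For $k\le n$ its sets are exactly the $A_k$ of \eqref{eq:Adef2222}, since $\pi^\ast(p)_{\tau_k}=p_{\sigma_k}$. The extra set is
\[
A_{n+1}=\{(e_a,e_a)^T:\; p_\ast^{a}\neq 0\},
\]
because $\pi^\ast(p)_{\tau_{n+1}}$ is the diagonal matrix with entries $p_\ast^a$.

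The key step, and the main obstacle, is to show that the conditions of Theorem~\ref{thm:afinoneintersect} for $(A_1,\dots,A_{n+1})$ are equivalent to the conditions of the corollary for $(A_1,\dots,A_n)$.

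\emph{Forward direction.} The diagonal vectors $(e_a,e_a)^T$ are always affinely independent, so the only content of the $A_{n+1}$ condition is the intersection. Every point of $\Conv(A_{n+1})$ lies in the set $\Delta$ displayed in \eqref{eq:convsum=1}. Conversely, let $u$ lie in $\bigcap_{k\le n}\Conv(A_k)\cap\Delta$. Since $u$ is a marginal of each $p'_{\sigma_k}$, the first block of $u$ has support contained in the row support of $p_{\sigma_k}$. That row support is exactly $\operatorname{supp}(p_\ast)$, so $u\in\Conv(A_{n+1})$. Hence
\[
\bigcap_{k\le n+1}\Conv(A_k)=\bigcap_{k\le n}\Conv(A_k)\cap\Delta,
\]
and this intersection is a single point. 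The nonzero-coefficient condition for $k\le n$ transfers verbatim.

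\emph{Converse direction.} Given sets $A_1,\dots,A_n$ satisfying the corollary, let $u$ be the unique point. Define $A_{n+1}:=\{(e_a,e_a)^T:\; u_a\neq 0\}$. Then $u\in\Conv(A_{n+1})$ with all coefficients $u_a>0$. By the support argument above, the intersection of all $n+1$ convex hulls is again exactly $\{u\}$. Theorem~\ref{thm:afinoneintersect} then produces a vertex of $\Dist(D_{n+1},m)$ whose $\tau_{n+1}$-matrix is diagonal, that is, a vertex of the form $\pi^\ast(p)$.

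The two assignments are mutually inverse because $A_{n+1}$ is determined by $u$. This yields the bijection claimed in the corollary.
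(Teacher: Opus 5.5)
Your proposal is correct and follows the paper's proof. Both collapse $\tau_{n+1}$ in $D_{n+1}$ to obtain $R_n$, apply Theorem~\ref{thm:afinoneintersect} to $\pi^\ast(p)$, note that $A_{n+1}$ consists only of diagonal product-simplex vertices, and identify $\bigcap_{k\le n+1}\Conv(A_k)$ with the set in \eqref{eq:convsum=1}. Your version also makes rigorous several steps the paper only sketches: the face argument covers deterministic vertices as well as the contextual ones handled by Proposition~\ref{pro:CollapDist}(2), the row-support argument justifies the intersection identity, and you explicitly rebuild $A_{n+1}$ from $u$ in the converse direction.
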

\begin{proof}
We define the collapsing map
$$
\pi \colon D_{n+1} \to R_n
$$
by collapsing the edge $\tau_{n+1}$ and sending $\tau_i$ to $\sigma_i$ for every $1\leq i \leq n$. 
By 
%Definition \ref{def:collpseddist} and 
Proposition~\ref{pro:CollapDist} part (2) and {since  $\pi^\ast \colon \Dist(R_{n}, {m}) \to 
\Dist(D_{n+1},{m})$ is injecive}, there is a one-to-one correspondence between vertices 
$p \in \Dist(R_{n},{m})$ and vertices of the form 
$\pi^\ast(p) \in \Dist(D_{n+1}, {m})$. 
By Theorem~\ref{thm:afinoneintersect}, the vertex $\pi^\ast(p)$ corresponds to 
a collection of sets $A_1,\dots,A_{n+1}$ satisfying
\begin{itemize}
    \item for every $1 \leq k \leq n+1$, the set $A_k$ is affinely independent;
    \item
 the intersection $\bigcap_{k=1}^{n+1} \Conv(A_k)$ consists of a single point, 
 {and for this point the coefficient of every $x \in A_i$ in its affine representation is nonzero.}
\end{itemize}
Here
$
A_k=\left\{(e_a,e_b)^T: \; \pi^{\ast}(p)_{\tau_k}^{ab}\neq 0\right\}
$ for each $1 \leq k \leq n+1$. Note that
$$
\pi^\ast(p)_{\tau_k}=p_{\sigma_k} \quad \text{for every } 1\leq k \leq n,
$$
and $\pi^\ast(p)_{\tau_{n+1}}$ is a collapsed distribution (see 
%Definition~\ref{def:CollapDist} and 
{part (1) of} Proposition~\ref{pro:CollapDist}).  
Thus, for every $1 \leq k \leq n$, the set $A_k$ is defined as in \eqref{eq:Adef2222}, while
$$
A_{n+1} \subseteq \set{(e_i,e_i)^T :\; 1\leq i \leq m}.
$$
The set \(A_{n+1}\) is therefore affinely independent, and 
$$
\bigcap_{k=1}^{n+1} \Conv(A_k)= \bigcap_{k=1}^n \Conv(A_k)
    \;\cap\;
    \left\{
    \begin{pmatrix}
    \alpha_1 \\
    \vdots \\
    \alpha_m \\
    \rule{0.6em}{0.4pt} \\
    \alpha_1 \\
    \vdots \\
    \alpha_m
    \end{pmatrix}
    :\; \sum_{i:~(e_i,e_i)^T \in A_{n+1}} \alpha_i = 1
    \right\}.
$$
Since the sets $A_k$ are induced by the same vertex $p$, the index set
$\set{i : \; (e_i,e_i)^T \in A_{n+1}}$ is determined uniquely by any $A_k$. Hence, the final sum may equivalently be written as
$$
\sum_{i=1}^m \alpha_i = 1,
$$
which completes the proof.
\end{proof}

\section{Graph-theoretic characterization of vertices}\label{sec:graphcharac}

{In this section, we use the characterization of vertices on dipole graphs and rose graphs given in the previous section to derive a simpler, equivalent condition for being a vertex on these spaces, using only basic linear algebra and graph theory.}

We begin with the following useful connection between affine independence and linear independence.

\begin{lem}\label{lem:affin=lin}
Let $X\subseteq \{(e_i,e_j)^T : 0 \le i,j \le m-1\}$ be a set of product-simplex vertices in $\mathbb{R}^{2m}$. 
Then the set $X$ is affinely independent if and only if it is linearly independent.    
\end{lem}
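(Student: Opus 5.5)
The proof turns on one observation: every product-simplex vertex lies in an affine hyperplane that misses the origin. For $v=(e_i,e_j)^T$ the sum of the first $m$ coordinates equals $1$. So let $\ell\colon\RR^{2m}\to\RR$ be the linear functional
\[
\ell(x)=\sum_{k=1}^{m} x_k ,
\]
which satisfies $\ell(v)=1$ for every $v\in X$. With this functional, the two notions of independence coincide.

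For the direction that linear independence implies affine independence, I will argue directly from part (2) of Definition~\ref{def:affinindep}. Suppose $\sum_{v\in X}\lambda_v v=0$ and $\sum_{v}\lambda_v=0$. Linear independence alone already forces $\lambda_v=0$ for all $v$, so $X$ is affinely independent. This direction holds for any finite set, not only for product-simplex vertices.

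For the converse, assume $X$ is affinely independent and suppose $\sum_{v\in X}\lambda_v v=0$ for some real coefficients. Applying $\ell$ gives
\[
0=\ell\Bigl(\sum_v\lambda_v v\Bigr)=\sum_v\lambda_v\,\ell(v)=\sum_v\lambda_v .
\]
The coefficients therefore satisfy both conditions in part (2) of Definition~\ref{def:affinindep}. Affine independence then gives $\lambda_v=0$ for all $v$, so $X$ is linearly independent.

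There is no real obstacle in this argument. The only point to check is that $\ell$ takes the constant value $1$ on all product-simplex vertices, which holds because each block $e_i$ has coordinate sum $1$; the second block could be used equally well. I would also note that the same argument applies to average points, and to any set of vectors lying in an affine hyperplane not through the origin.
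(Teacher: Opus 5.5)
Your proof is correct and is essentially the paper's argument. The paper applies the functional summing all $2m$ coordinates (value $2$ on each product-simplex vertex) rather than only the first block (value $1$), which is an immaterial difference; in both versions the relation $\sum_v \lambda_v v = 0$ yields $\sum_v \lambda_v = 0$, and affine independence then finishes the proof.
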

\begin{proof}
Firstly, note that linear independence implies affine independence, so we only need to show the converse.
{Now, suppose that the set $X$ is affinely independent, and} let $\{\alpha_v\}_{v\in X}$ be scalars such that
\[
\sum_{v\in X}\alpha_v v = 0.
\]
Since the sum of the coordinates of every vector in $X$ is $2$, we {obtain} $2\sum_v \alpha_v = 0$. By condition (2) of Definition~\ref{def:affinindep}, we get $\alpha_v=0$ for every $v \in X$. 
\end{proof}
%

%\begin{rem}\label{rem:HXXX}
%Given a set $X$ of \co{product-simplex vertices} in the $2m$-cube, we can associate to it a bipartite graph $H(X)$ with \emph{left vertex set} $A$ and \emph{right vertex set} $B$ as subsets of $\{u_0,\dots,u_{m-1}\}$ and $\{w_0,\dots,w_{m-1}\}$, respectively, where there is an edge between $u_i$ and $w_j$ if and only if $(e_i,e_j)^T$ is in the set. This gives a one-to-one correspondence between sets of \co{product-simplex vertices} in the $2m$-cube and such bipartite graphs.
 
%\end{rem}
%\end{cihan}

\begin{defn}\label{def:HXXX}
Let $X\subseteq \{(e_i,e_j)^T : 0 \le i,j \le m-1\}$ be a set of product-simplex vertices in $\RR^{2m}$.
We associate to $X$ a bipartite graph $H(X)$ as follows. The \emph{left vertex set} is
\[
A=\{u_i : \text{there exists } j \text{ such that } (e_i,e_j)^T\in X\},
\]
and the \emph{right vertex set} is
\[
B=\{w_j : \text{there exists } i \text{ such that } (e_i,e_j)^T\in X\}.
\]
There is an edge between $u_i$ and $w_j$ if and only if
$
(e_i,e_j)^T\in X$.
In this way, sets of product-simplex vertices in $\RR^{2m}$ are in one-to-one correspondence with bipartite graphs of this form.
\end{defn}

\begin{pro}\label{pro:affinHXB}
Let $X$ be a set of product-simplex vertices in $\RR^{2m}$. 
Then, $X$ is affinely independent if and only if the bipartite graph $H(X)$ has no cycles. 
\end{pro}
\begin{proof}
%\noindent\emph{($\Rightarrow$)}  
($\implies$)
Assume that $H(X)$ contains a cycle
$$
u_{i_1} \to w_{j_1} \to u_{i_2} \to w_{j_2} \to \dots \to u_{i_n} \to w_{j_n} \to u_{i_1}.
$$
Then
$$
(e_{i_1},e_{j_1})^T,(e_{i_2},e_{j_1})^T,(e_{i_2},e_{j_2})^T, \dots,
(e_{i_n},e_{j_n})^T,(e_{i_1},e_{j_n})^T \in X,
$$
and hence
$$
(e_{i_1},e_{j_1})^T-(e_{i_2},e_{j_1})^T+(e_{i_2},e_{j_2})^T-\cdots
+(e_{i_n},e_{j_n})^T-(e_{i_1},e_{j_n})^T=0,
$$
which shows that the set $X$ is affinely dependent.

%\medskip
%\noindent\emph{($\Leftarrow$)} 
($\impliedby$)
Assume that $H(X)$ has no cycles. Consider a linear relation
$$
\sum_{(e_i,e_j)^T\in X} \lambda_{ij}(e_i,e_j)^T=\vec{0},
\qquad \lambda_{ij}\in\mathbb{R}.
$$
This implies that
\begin{align}
\sum_{j:~(e_i,e_j)^T\in X} \lambda_{ij} &= 0 \quad \text{for all } i,
\label{eq:first}\\
\sum_{i:~(e_i,e_j)^T\in X} \lambda_{ij} &= 0 \quad \text{for all } j.
\label{eq:second}
\end{align}
Since $H(X)$ has no cycles, it has a leaf vertex, i.e., a vertex of degree one. Without loss of generality, suppose $u_i$ is a leaf.
Then, there exists exactly one edge $(u_i,w_j)\in H(X)$ incident to $u_i$, meaning that $(e_i,e_j)^T \in X$ is the unique vector with first coordinate $e_i$.
Equation~\eqref{eq:first} for this $i$ reduces to
$$
\lambda_{ij}=0.
$$
Removing this edge from $H(X)$ and repeating the argument inductively shows that all coefficients $\lambda_{ij}$ vanish.
Hence the only linear relation is the trivial one, and $X$ is linearly (and therefore affinely) independent.
\end{proof}

\begin{lem}\label{lem:spanXUX}
Let $X$ be a set of product-simplex vertices in $\RR^{2m}$ such that the associated
bipartite graph $H(X)$ is a tree.  
Define
$$
I_X := \set{i:\; \exists j\; \text{such that} \; (e_i,e_j)^T\in X},
\qquad
J_X := \set{j:\; \exists i\; \text{such that} \; (e_i,e_j)^T\in X}.
$$
Then, $\operatorname{span}(X)$, the subspace spanned by the vectors in $X$, is equal to  
\begin{equation}\label{eq:UX}
U(X)
:=
\Bigl\{
(\alpha,\beta)^T \in \RR^{2m}
: \;
\sum_{k=0}^{m-1}\alpha_k = \sum_{k=0}^{m-1}\beta_k,\;
\alpha_i = 0 \ \forall i \notin I_X,\;
\beta_j = 0 \ \forall j \notin J_X
\Bigr\}.
\end{equation}
\end{lem}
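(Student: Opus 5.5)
The plan is to prove the two inclusions $\operatorname{span}(X)\subseteq U(X)$ and $U(X)\subseteq\operatorname{span}(X)$. For the second, we compare dimensions rather than construct preimages.

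The first inclusion is immediate. Each generator $(e_i,e_j)^T\in X$ has $\alpha=e_i$ and $\beta=e_j$, so both coordinate sums equal $1$. Its $\alpha$-support is $\{i\}\subseteq I_X$ and its $\beta$-support is $\{j\}\subseteq J_X$. All three defining conditions of $U(X)$ are linear, so $U(X)$ is a subspace and contains every linear combination of elements of $X$.

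For the reverse inclusion we compute dimensions.
\begin{itemize}
\item Since $H(X)$ is a tree, it has no cycles. By Proposition~\ref{pro:affinHXB} the set $X$ is affinely independent, and by Lemma~\ref{lem:affin=lin} it is linearly independent. Hence $\dim\operatorname{span}(X)=|X|$.
\item A tree has one fewer edge than vertices. The vertices of $H(X)$ are exactly $\{u_i: i\in I_X\}\cup\{w_j:j\in J_X\}$, so $|X|=|I_X|+|J_X|-1$.
\item On the other side, $U(X)$ sits inside the coordinate subspace $\RR^{I_X}\times\RR^{J_X}$, which has dimension $|I_X|+|J_X|$. Inside it, $U(X)$ is cut out by the single nonzero linear functional $\sum_{i\in I_X}\alpha_i-\sum_{j\in J_X}\beta_j$. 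This functional is nonzero because $I_X$ is nonempty whenever $X$ is. Therefore $\dim U(X)=|I_X|+|J_X|-1$.
\end{itemize}
The two dimensions agree, so the inclusion $\operatorname{span}(X)\subseteq U(X)$ is an equality.

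The only delicate points are bookkeeping. First, we must identify the vertex set of $H(X)$ with $I_X\sqcup J_X$; this holds because Definition~\ref{def:HXXX} includes only vertices incident to some edge, so the tree has no isolated vertices. Second, the degenerate case $X=\varnothing$ should be excluded or handled separately. A tree is nonempty, so this case is excluded by the hypothesis.
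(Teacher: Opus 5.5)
Your proposal is correct and follows essentially the same argument as the paper. You prove the inclusion $\operatorname{span}(X)\subseteq U(X)$, get linear independence of $X$ from Proposition~\ref{pro:affinHXB} and Lemma~\ref{lem:affin=lin}, and match $|X|=|I_X|+|J_X|-1$ (edges of a tree) against $\dim U(X)$. You additionally spell out why $\dim U(X)=|I_X|+|J_X|-1$ and the vertex-set bookkeeping, which the paper leaves implicit.
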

\begin{proof} 
Every product-simplex vertex in $X$ clearly belongs to $U(X)$, hence
$\operatorname{span}(X) \subseteq U(X)$.
Now, the number of vectors in $X$ equals the number of edges
of the tree $H(X)$, namely $|I_X| + |J_X| - 1$.
By Proposition~\ref{pro:affinHXB} the set $X$ is affinely independent, so by Lemma~\ref{lem:affin=lin} it is linearly independent.
Therefore, 
the dimension of $\operatorname{span}(X)$ is $|I_X|+|J_X|-1$, which is exactly the dimension of $U(X)$. Hence, 
$\operatorname{span}(X) = U(X)$.
\end{proof}

\begin{defn}\label{def:QA1An}
Let $H_1,\dots,H_n$ be bipartite graphs, each with
$\{u_0,\dots,u_{m-1}\}$ as the left vertex set and
$\{w_0,\dots,w_{m-1}\}$ as the right vertex set. Let
$C_1,\dots,C_N$ denote the connected components of the graphs $H_i$,
$i=1,\dots,n$. We define $N \times 2m$ matrix $Q(H_1,\dots,H_n)$ by
$$
Q(H_1,\dots,H_n)_{k,a} =
\begin{cases}
+1 & v_a \in C_{k} \text{ and } 0 \le a \le m-1,\\
-1 & w_{a-m} \in C_{k} \text{ and } m \le a \le 2m-1,\\
0 & \text{otherwise}.
\end{cases}
$$
For sets $A_1,\dots,A_n$ of product-simplex vertices in $\RR^{2m}$, we define 
$$
Q(A_1,\dots,A_n):=Q\left(H(A_1),\dots,H(A_n)\right).
$$
\end{defn}
%

%\coc{figure will be updated to tikzpicture}
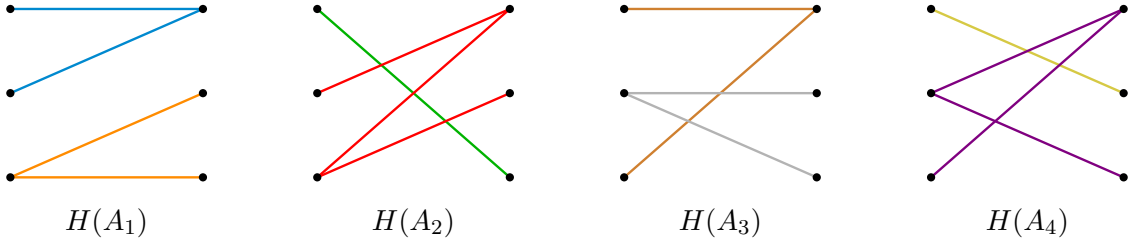
\begin{figure}[ht]
\centering
\begin{tikzpicture}[x=1cm,y=1cm,scale=.8]

% ---------------- H(A_1) ----------------
\begin{scope}[shift={(0,0)}]
  \coordinate (L1) at (0,  1.4);
  \coordinate (L2) at (0,  0);
  \coordinate (L3) at (0, -1.4);
  \coordinate (R1) at (3.2,  1.4);
  \coordinate (R2) at (3.2,  0);
  \coordinate (R3) at (3.2, -1.4);

  \draw[line width=0.9pt, cyan!70!blue] (L1)--(R1);
  \draw[line width=0.9pt, cyan!70!blue] (L2)--(R1);

  \draw[line width=0.9pt, orange!90!yellow] (L3)--(R2);
  \draw[line width=0.9pt, orange!90!yellow] (L3)--(R3);

  \foreach \v in {L1,L2,L3,R1,R2,R3}{
    \node[circle,fill=black,inner sep=1.1pt] at (\v) {};
  }

  \node at (1.6,-2.15) {$H(A_1)$};
\end{scope}

% ---------------- H(A_2) ----------------
\begin{scope}[shift={(5.1,0)}]
  \coordinate (L1) at (0,  1.4);
  \coordinate (L2) at (0,  0);
  \coordinate (L3) at (0, -1.4);
  \coordinate (R1) at (3.2,  1.4);
  \coordinate (R2) at (3.2,  0);
  \coordinate (R3) at (3.2, -1.4);

  \draw[line width=0.9pt, green!70!black] (L1)--(R3);

  \draw[line width=0.9pt, red] (L2)--(R1);
  \draw[line width=0.9pt, red] (L3)--(R1);
  \draw[line width=0.9pt, red] (L3)--(R2);

  \foreach \v in {L1,L2,L3,R1,R2,R3}{
    \node[circle,fill=black,inner sep=1.1pt] at (\v) {};
  }

  \node at (1.6,-2.15) {$H(A_2)$};
\end{scope}

% ---------------- H(A_3) ----------------
\begin{scope}[shift={(10.2,0)}]
  \coordinate (L1) at (0,  1.4);
  \coordinate (L2) at (0,  0);
  \coordinate (L3) at (0, -1.4);
  \coordinate (R1) at (3.2,  1.4);
  \coordinate (R2) at (3.2,  0);
  \coordinate (R3) at (3.2, -1.4);

  \draw[line width=0.9pt, brown!75!orange] (L1)--(R1);
  \draw[line width=0.9pt, brown!75!orange] (L3)--(R1);

  \draw[line width=0.9pt, gray!60] (L2)--(R2);
  \draw[line width=0.9pt, gray!60] (L2)--(R3);

  \foreach \v in {L1,L2,L3,R1,R2,R3}{
    \node[circle,fill=black,inner sep=1.1pt] at (\v) {};
  }

  \node at (1.6,-2.15) {$H(A_3)$};
\end{scope}

% ---------------- H(A_4) ----------------
\begin{scope}[shift={(15.3,0)}]
  \coordinate (L1) at (0,  1.4);
  \coordinate (L2) at (0,  0);
  \coordinate (L3) at (0, -1.4);
  \coordinate (R1) at (3.2,  1.4);
  \coordinate (R2) at (3.2,  0);
  \coordinate (R3) at (3.2, -1.4);

  \draw[line width=0.9pt, yellow!80!black] (L1)--(R2);

  \draw[line width=0.9pt, violet] (L2)--(R1);
  \draw[line width=0.9pt, violet] (L3)--(R1);
  \draw[line width=0.9pt, violet] (L2)--(R3);

  \foreach \v in {L1,L2,L3,R1,R2,R3}{
    \node[circle,fill=black,inner sep=1.1pt] at (\v) {};
  }

  \node at (1.6,-2.15) {$H(A_4)$};
\end{scope}

\end{tikzpicture}
\caption{The bipartite graphs \(H(A_1)\), \(H(A_2)\), \(H(A_3)\), \(H(A_4)\) from Example~\ref{ex:simplistforwedgetwopoints}, with connected components indicated by different colors.}
\label{Fig:Bipartitegrphs1}
\end{figure}

%\begin{figure}[h!] 
%  \centering
%  \includegraphics[width=.7\linewidth]{bipartitegraphs1.png}
%\caption{The bipartite graphs $H(A_1), H(A_2), H(A_3), H(A_4)$ from Example~\ref{ex:simplistforwedgetwopoints}, with connected components indicated by different colors.}
%\label{Fig:Bipartitegrphs1}
%\end{figure}   

\begin{ex}
Consider the sets $A_1,A_2,A_3,A_4$ from
Example~\ref{ex:simplistforwedgetwopoints}. The graphs
\[H(A_1),H(A_2),H(A_3),H(A_4)\] 
are shown in
Figure~\ref{Fig:Bipartitegrphs1}, with their connected components colored
differently. 
%We order the components as follows:
%$$
%C_1=\text{blue}, \quad
%C_2=\text{orange}, \quad
%C_3=\text{green}, \quad
%C_4=\text{red},
%$$
%$$
%C_5=\text{brown}, \quad
%C_6=\text{gray}, \quad
%C_7=\text{yellow}, \quad
%C_8=\text{purple}.
%$$
%
Then 
%$Q(A_1,A_2,A_3,A_4)$ is the following $8\times 6$ matrix:
\begin{equation}\label{eq:matrQA1}
Q(A_1,A_2,A_3,A_4)=\begin{pmatrix}
1 & 1 & 0 & -1 & 0 & 0 \\
0 & 0 & 1 & 0 & -1 & -1 \\
1 & 0 & 0 & 0 & 0 & -1 \\
0 & 1 & 1 & -1 & -1 & 0 \\
1 & 0 & 1 & -1 & 0 & 0 \\
0 & 1 & 0 & 0 & -1 & -1 \\
1 & 0 & 0 & 0 & -1 & 0 \\
0 & 1 & 1 & -1 & 0 & -1
\end{pmatrix}.
\end{equation}
\end{ex}

\begin{pro}\label{pro:spaAiQA}
Let $A_1,\dots,A_n$ be sets of product-simplex vertices in $\RR^{2m}$. Assume that
for every $1 \le i \le n$, 
%the elements of 
 $A_i$ is affinely independent. Then
\begin{equation}\label{eq:capspanAi}
\bigcap_{i=1}^n \operatorname{span}(A_i)=\set{x \in \RR^{2m}: \; Q(A_1,\dots,A_n) x =0 } .
\end{equation}
\end{pro}
\begin{proof}
We first show that for every \(1 \le j \le n\),
$$
\operatorname{span}(A_j)
=
\bigl\{ x \in \RR^{2m} :\; Q(A_j)\,x = 0 \bigr\}.
$$
Let $C_{j_1},\dots,C_{j_k}$ be the connected components of $H(A_j)$, and let
$X_{j_1},\dots,X_{j_k}$ be the corresponding subsets of $A_j$.
By Proposition~\ref{pro:affinHXB}, each $C_{j_\ell}$ is a tree.
By Lemma~\ref{lem:affin=lin}, the set $A_j$ is linearly independent.
Using this fact together with Lemma~\ref{lem:spanXUX}, we obtain
$$
\begin{aligned}
\operatorname{span}(A_j)
&=
\operatorname{span}(X_{j_1}) \oplus \cdots \oplus \operatorname{span}(X_{j_k}) \\
&=
U(X_{j_1}) \oplus \cdots \oplus U(X_{j_k}) \\
&=
\bigl\{ x \in \RR^{2m} :\; Q(A_j)\,x = 0 \bigr\}.
\end{aligned}
$$
See Equation~\eqref{eq:UX}. Observe that
$$
Q(A_1,\dots,A_n)
=
\begin{pmatrix}
Q(A_1) \\
\vdots \\
Q(A_n)
\end{pmatrix}.
$$
Combining these equalities yields Equation~\eqref{eq:capspanAi}.
\end{proof}

\begin{thm}\label{thm:bybipartgraphs}
Let $D_n$ denote the dipole graph with $n$ edges.
Given a simplicial distribution $p\colon D_n \to D(\Delta_{\ZZ_m})$, we define associated bipartite graphs $H_1, H_2, \dots, H_n$, each with
\begin{itemize}
  \item a left vertex set given by a subset of $\{u_0,\dots,u_{m-1}\}$, 
  \item a right vertex set given by a subset of $\{w_0,\dots,w_{m-1}\}$,
  \item an edge between $u_a$ and $w_b$ in $H_i$ if and only if $p_{\tau_i}^{ab} \neq 0$.  
\end{itemize}
Then $p$ is a vertex if and only if the following conditions are satisfied:
\begin{itemize}
    \item For every $1 \leq i \leq n$, the graph of $H_i$ contains no cycle;
    \item the rank of $Q(H_1,\dots,H_n)$ is equal to $2m-1$.
\end{itemize}    
\end{thm}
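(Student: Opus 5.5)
The plan is to reduce the statement to Theorem~\ref{thm:afinoneintersect} and then translate its two conditions into graph and rank conditions. Let $A_i=\set{(e_a,e_b)^T :\; p^{ab}_{\tau_i}\neq 0}$, so that $H_i=H(A_i)$ in the sense of Definition~\ref{def:HXXX} and $Q(H_1,\dots,H_n)=Q(A_1,\dots,A_n)$. Note that $p$ is a point of $\Dist(D_n,m)$ and $u:=p|_{\Delta^0\sqcup\Delta^0}$ lies in every $\Conv(A_i)$. Its coefficients in $A_i$ are exactly the entries $p^{ab}_{\tau_i}$, and these are all nonzero. So $p$ is a vertex if and only if two things hold: each $A_i$ is affinely independent, and $\bigcap_i\Conv(A_i)=\{u\}$. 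For the forward direction this is Theorem~\ref{thm:afinoneintersect}. For the converse it is Corollary~\ref{cor:VertofGluing}, using that $\Vsupp(p_{\tau_i})=\set{E_{a,b}: p^{ab}_{\tau_i}\neq0}$ maps bijectively onto $A_i$. By Proposition~\ref{pro:affinHXB}, the first condition is exactly acyclicity of every $H_i$. It therefore remains to show the following: assuming every $H_i$ is acyclic, $\bigcap_i\Conv(A_i)=\{u\}$ holds if and only if $\operatorname{rank}Q=2m-1$.

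For this I use Proposition~\ref{pro:spaAiQA}, which gives $\bigcap_i\operatorname{span}(A_i)=\ker Q$. The vector $w=(\mathbb 1,\mathbb 1)^T$ (all ones) satisfies $Qw=0$. Indeed, each component $C_k$ of $H_i$ is a tree with at least one edge, so it has the same number of left and right vertices counted with signs $+1$ and $-1$. More directly, every element of $A_i$ lies in $U(X)$ for its component, and the row sums vanish. Hence $\operatorname{rank}Q\le 2m-1$ always, and $\operatorname{rank}Q=2m-1$ is equivalent to $\ker Q=\RR w$. It is cleaner to work with $u$ itself: $u\in\ker Q$ and $u\neq0$. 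So the rank condition says precisely that $\bigcap_i\operatorname{span}(A_i)=\RR u$.

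For ($\Leftarrow$), suppose $z\in\bigcap_i\Conv(A_i)$. Then $z\in\bigcap_i\operatorname{span}(A_i)=\RR u$, and comparing coordinate sums (both equal $2$) gives $z=u$. For ($\Rightarrow$), suppose $\dim\ker Q\ge2$ and pick $v\in\ker Q$ not proportional to $u$. Write $v=\sum_{x\in A_i}\lambda^{(i)}_x x$; this representation is unique by linear independence (Lemma~\ref{lem:affin=lin}). Replacing $v$ by $v-cu$, I may assume the coordinate sum of $v$ is $0$, so $\sum_x\lambda^{(i)}_x=0$ for every $i$. Since all coefficients of $u$ in each $A_i$ are strictly positive, $u+\epsilon v\in\Conv(A_i)$ for small $\epsilon>0$ and all $i$ simultaneously. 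This gives a second point of the intersection, a contradiction.

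The main obstacle is bookkeeping rather than depth. One must check that Proposition~\ref{pro:spaAiQA} applies with the matrix $Q$ built from components that include isolated or absent vertices. Here the paper's convention is that $H_i$ has only the vertices appearing in edges, and uncovered coordinates are forced to zero by $U(X)$. Those coordinates must then correspond to zero columns or missing rows consistently, so I would verify that isolated outcomes still contribute the constraints $\alpha_i=0$. If they do not, $\ker Q$ would be larger. In that case I would note that $u$ already forces those coordinates to vanish in every $\operatorname{span}(A_i)$, and argue directly with $\bigcap_i\operatorname{span}(A_i)$ instead of $\ker Q$ where needed.
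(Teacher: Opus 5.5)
Your proposal is correct and follows the paper's proof essentially step for step. It uses the same ingredients: the reduction via Theorem~\ref{thm:afinoneintersect} and Proposition~\ref{pro:affinHXB}, the identification $\ker Q=\bigcap_i\operatorname{span}(A_i)$ from Proposition~\ref{pro:spaAiQA}, coordinate sums for one direction, and a positive perturbation of $u=p|_{\Delta^0\sqcup\Delta^0}$ for the other. Your centred $u+\epsilon v$ is just a rescaling of the paper's $\tilde{x}+t\,u$ followed by normalisation.

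Drop the claim that $(1,\dots,1)^T\in\ker Q$. It is false: row $k$ of $Q$ applied to it gives the number of left minus right vertices of $C_k$, which is already nonzero for a path $u_0$--$w_0$--$u_1$. It is also unneeded, since $0\neq u\in\ker Q$ suffices.

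On your last paragraph: Proposition~\ref{pro:spaAiQA}, and hence the theorem, needs $Q$ built on the full vertex sets, as Definition~\ref{def:QA1An} states. Then uncovered outcomes are singleton components whose rows force those coordinates to vanish. Under the other reading a deterministic vertex would give $\operatorname{rank}Q=1$, so your fallback of bypassing $\ker Q$ could not rescue the rank statement.
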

\begin{proof}
For $1 \le j \le n$, let $A_j$ be defined as in~\eqref{eq:Adef}.
By Theorem~\ref{thm:afinoneintersect}, $p$ is a vertex if and only if the sets
$A_1,\dots,A_n$ satisfy the following conditions:
\begin{itemize}
    \item for every $1 \le i \le n$, 
    %the elements of 
    $A_i$ is affinely independent;
    \item the intersection $\bigcap_{i=1}^n \Conv(A_i)$ consists of a single point,
    and in its affine representation the coefficient of every $x\in A_j$ is nonzero.
\end{itemize}
Note that $H_j$ coincides with $H(A_j)$ as defined in
Definition~\ref{def:HXXX}. By Proposition~\ref{pro:affinHXB}, the first condition
is equivalent to the requirement that each $H_j$ contains no cycle.

By Proposition~\ref{pro:spaAiQA}, the the rank of $Q(H_1,\dots,H_n)$ is equal to $2m-1$ if and only if
the space 
$\bigcap_{i=1}^n \operatorname{span}(A_i)$  is one-dimensional. Observe that
$p|_{\Delta^0 \sqcup \Delta^0} \in \bigcap_{i=1}^n \Conv(A_i)$, and hence
$p|_{\Delta^0 \sqcup \Delta^0} \in \bigcap_{i=1}^n \operatorname{span}(A_i)$.
Moreover, the coefficient of $(e_a,e_b)^T \in A_j$ in the affine representation of
$p|_{\Delta^0 \sqcup \Delta^0}$ is exactly $p_{\tau_j}^{ab}$, which is nonzero
(see Equation~\eqref{eq:Adef}).
Thus, it remains to prove that under the assumption that for every $1 \le j \le n$, the set $A_j$ is affinely independent
$$
\bigcap_{i=1}^n \Conv(A_i)
=
\{p|_{\Delta^0 \sqcup \Delta^0}\}
\quad\Longleftrightarrow\quad
\bigcap_{i=1}^n \operatorname{span}(A_i)
=
\operatorname{span}\!\bigl(p|_{\Delta^0 \sqcup \Delta^0}\bigr).
$$

Assume first that
$
\bigcap_{i=1}^n \operatorname{span}(A_i)
=
\operatorname{span}\!\bigl(p|_{\Delta^0 \sqcup \Delta^0}\bigr)
$. 
If $q \in \bigcap_{i=1}^n \Conv(A_i)$, then
$q \in \bigcap_{i=1}^n \operatorname{span}(A_i)$,
and hence $q$ is a scalar multiple of $p|_{\Delta^0 \sqcup \Delta^0}$.
Since every vector in $\bigcap_{i=1}^n \Conv(A_i)$ lies in
$D(\ZZ_m)\times D(\ZZ_m)$ and therefore has total coordinate sum equal to $2$. This implies that the scalar multiple is $1$, and hence 
%it follows that 
$q=p|_{\Delta^0 \sqcup \Delta^0}$.

Conversely, suppose there exists a nonzero
$\tilde{x} \in \bigcap_{i=1}^n \operatorname{span}(A_i)$ that is not a scalar multiple of
$p|_{\Delta^0 \sqcup \Delta^0}$.
For any $x\in  \bigcap_{i=1}^n \operatorname{span}(A_i)$, $1 \le j \le n$, and $a,b\in\ZZ_m$, denote by 
$
\gamma^{a,b}_{j,x}
$
the coefficient of $(e_a,e_b)^T$ in the representation of $x$ as a linear
combination of the elements of $A_j$. In particular,
$$
\gamma^{a,b}_{j,p|_{\Delta^0 \sqcup \Delta^0}}
=
p_{\tau_j}^{a,b}
\ge 0,
$$
with strict inequality whenever $(e_a,e_b)^T \in A_j$.
For every $1 \leq j \leq n$, there exists $t_j>0$ such that
$$
\gamma^{a,b}_{j,\tilde{x}}+t_j\gamma^{a,b}_{j,p|_{\Delta^0 \sqcup \Delta^0}} \ge 0,
$$
with strict inequality whenever
$\gamma^{a,b}_{j,p|_{\Delta^0 \sqcup \Delta^0}}>0$.
Let $t=\max\{t_1,\dots,t_n\}$ and define
$$
z:=\tilde{x}+t\,p|_{\Delta^0 \sqcup \Delta^0}.
$$
Then $z \in \bigcap_{i=1}^n \operatorname{span}(A_i)$, and we have
$$
\gamma^{a,b}_{j,z}
=
\gamma^{a,b}_{j,\tilde{x}}
+
t\,\gamma^{a,b}_{j,p|_{\Delta^0 \sqcup \Delta^0}}
\ge 0.
$$
Write $z=(\alpha,\beta)^T$, so that
$\sum_{i=1}^m \alpha_i=\sum_{i=1}^m \beta_i$.
Define
$$
z':=\frac{1}{\sum_{i=1}^m \alpha_i}\,z.
$$
Then
$$
z'
=
\sum_{(e_a,e_b)^T\in A_j}
\frac{\gamma^{a,b}_{j,z}}{\sum_{i=1}^m \alpha_i}
\,(e_a,e_b)^T,
$$
and we have 
$$
\sum_{(e_a,e_b)^T \in A_j} \frac{\gamma^{a,b}_{j,z}}{\sum_{i=1}^m \alpha_i}=1,
$$
since 
$$
\sum_{(e_a,e_b)^T \in A_j} \gamma^{a,b}_{j,z}= \sum_{i=0}^{m-1}\sum_{b:\, (e_i,e_b)^T \in A_j} \gamma^{i,b}_{j,z}=
\sum_{i=0}^{m-1}\alpha_{{i}}.
$$
Hence $z' \in \bigcap_{i=1}^n \Conv(A_i)$.
Since $\tilde{x}$ is not a scalar multiple of $p|_{\Delta^0 \sqcup \Delta^0}$, neither is $z'$.
Thus, $\bigcap_{i=1}^n \Conv(A_i)$ contains more than one point, completing the proof. 
\end{proof}
\begin{ex}
In the bipartite graphs shown in Figure~\ref{Fig:Bipartitegrphs1}, there are no
cycles. Moreover, the rank of the matrix in Equation~\eqref{eq:matrQA1} is $5$.
Therefore, by Theorem~\ref{thm:bybipartgraphs}, the simplicial distribution $p$
from Example~\ref{ex:simplistforwedgetwopoints} is a vertex.
\end{ex}

\begin{defn}\label{def:tildeQA1An}
Let $H_1,\dots, H_n$ be bipartite graphs, each with $\{u_0,\dots,u_{m-1}\}$ as the left vertex set and
$\{w_0,\dots,w_{m-1}\}$ as the right vertex set.
Let $C_{1},\dots,C_{N}$ denote the connected components of the graphs $H_i$ as $i = 1, \dots, n$.
We define the $(N+m) \times 2m$ matrix $\tilde{Q}(H_1,\dots,H_n)$ by 
$$
\tilde{Q}(H_1,\dots,H_n)_{k,a} =
\begin{cases}
+1 & 1\leq k \leq N, \; v_a \in C_{k}, \text{ and } 0 \leq a \leq m-1,\\
-1 & 1\leq k \leq N, \; w_{a-m} \in C_{k} \text{ and } m \leq a \leq 2m-1,\\
+1  & N+1\leq k \leq N+M \text{ and } a=k-N ,\\
-1  & N+1\leq k \leq N+M \text{ and } a=k-N+m, \\
0 & \text{otherwise}.
\end{cases}
$$
For sets $A_1,\dots,A_n$ of product-simplex vertices in $\RR^{2m}$, we define 
$$
\tilde{Q}(A_1,\dots,A_n):=\tilde{Q}\left(H(A_1),\dots,H(A_n)\right).
$$
\end{defn}
Note that 
\begin{equation}\label{eq:QHtildeQH}
\tilde{Q}(H_1,\dots,H_n)
=
\begin{pmatrix}
Q(H_1,\dots,H_n) \\
I_m\;\;\;\;\;\;-I_m
\end{pmatrix}.
\end{equation}
See Definition \ref{def:QA1An}.
%

%\coc{stated as a corollary}

\begin{cor}\label{cor:spaAitildeQA}
Let $A_1,\dots,A_n$ be sets of product-simplex vertices in $\RR^{2m}$. Assume that
for every $1 \le i \le n$, 
%the elements of 
$A_i$ is affinely independent. Then
\begin{equation}\label{eq:spanintersecttildeQ}
\bigcap_{i=1}^n \operatorname{span}(A_i)\;\cap\;
    \left\{
    \begin{pmatrix}
    \alpha_1 \\
    \vdots \\
    \alpha_m \\
    \rule{0.6em}{0.4pt} \\
    \alpha_1 \\
    \vdots \\
    \alpha_m
    \end{pmatrix}
    :\; \sum_{i=1}^m \alpha_i = 1
    \right\}=\set{x \in \RR^{2m}: \; \tilde{Q}(A_1,\dots,A_n) x =\vec{0} } 
\end{equation}
\end{cor}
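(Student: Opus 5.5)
The plan is to reduce everything to Proposition~\ref{pro:spaAiQA} using the block decomposition~\eqref{eq:QHtildeQH} of $\tilde{Q}(A_1,\dots,A_n)$. By that decomposition, a vector $x=(\alpha,\beta)^T\in\RR^{2m}$ lies in the kernel of $\tilde{Q}(A_1,\dots,A_n)$ exactly when two things hold:
\begin{enumerate}
\item $Q(A_1,\dots,A_n)\,x=0$;
\item $(I_m\;\;-I_m)\,x=0$.
\end{enumerate}
So the kernel of $\tilde{Q}$ is the intersection of the kernel of $Q(A_1,\dots,A_n)$ with the kernel of the $m\times 2m$ block $(I_m\;\;-I_m)$. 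I will read the last $m$ rows off~\eqref{eq:QHtildeQH} rather than from the index formulas in Definition~\ref{def:tildeQA1An}, since those formulas appear to carry an off-by-one shift between $0$- and $1$-based indexing. The kernel of the block $(I_m\;\;-I_m)$ is precisely the diagonal subspace $\Delta:=\{(\alpha,\alpha)^T:\alpha\in\RR^m\}$.

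Next I apply Proposition~\ref{pro:spaAiQA}. It is valid because each $A_i$ is assumed affinely independent, and it identifies the kernel of $Q(A_1,\dots,A_n)$ with $\bigcap_{i=1}^n\operatorname{span}(A_i)$. Combining this with the previous step gives
\[
\ker\tilde{Q}(A_1,\dots,A_n)\;=\;\bigcap_{i=1}^n\operatorname{span}(A_i)\;\cap\;\Delta .
\]
This identity is the linear content of~\eqref{eq:spanintersecttildeQ}.

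The main point requiring care is the normalization $\sum_i\alpha_i=1$ in the displayed set on the left-hand side. The right-hand side of~\eqref{eq:spanintersecttildeQ} is a linear subspace and contains $0$, while the normalized set does not. So the identity has to be read in one of two ways.
\begin{itemize}
\item Either the diagonal set is taken without the normalization, i.e.\ as $\Delta$ itself, in which case the identity above is exactly the claim.
\item Or the normalization $\sum_i\alpha_i=1$ is imposed on both sides, which is the form used later for the rose graph.
\end{itemize}
In the second reading, the normalized set on the left equals the identity above intersected with the affine hyperplane $\{\sum_i\alpha_i=1\}$, and the same intersection is taken on the right. No compatibility condition is lost, because every $x\in\operatorname{span}(A_i)$ already satisfies $\sum_k\alpha_k=\sum_k\beta_k$ by Lemma~\ref{lem:spanXUX}. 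I would state the corollary in the linear form and remark that the normalized version follows by intersecting with that hyperplane. This is what is needed to mimic the rank argument of Theorem~\ref{thm:bybipartgraphs} for $R_n$, through the collapsing correspondence of Corollary~\ref{cor:afinoneintersect2222}.
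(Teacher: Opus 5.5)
Your proof is correct and is the same as the paper's: by \eqref{eq:QHtildeQH}, $\ker\tilde{Q}(A_1,\dots,A_n)=\ker Q(A_1,\dots,A_n)\cap\ker(I_m\;\;-I_m)$, and Proposition~\ref{pro:spaAiQA} identifies the first kernel with $\bigcap_{i}\operatorname{span}(A_i)$. You are also right that the displayed identity cannot hold literally with the constraint $\sum_i\alpha_i=1$ on the left, since the right-hand side is a linear subspace containing $0$; what the paper's one-line proof actually establishes, and what the rank count in Corollary~\ref{cor:characgraphs2} needs, is your unnormalized version with the diagonal subspace $\{(\alpha,\alpha)^T:\alpha\in\mathbb{R}^m\}$.
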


\begin{proof}
Equation (\ref{eq:spanintersecttildeQ}) follows from Proposition \ref{pro:spaAiQA} and Equation (\ref{eq:QHtildeQH}). 
\end{proof}
%

%Now, using Propositions \ref{pro:afinoneintersect2222}, \ref{pro:affinHXB}, and \ref{pro:spaAitildeQA} we can proof the following result similar to the proof of Theorem \ref{thm:bybipartgraphs}.

Now, using Corollary \ref{cor:afinoneintersect2222}, Proposition \ref{pro:affinHXB}, and {Corollary \ref{cor:spaAitildeQA}, we give a graph-theoretic characterization of {extremal simplicial} distributions on rose graphs, analogous to Theorem \ref{thm:bybipartgraphs}.

%\coc{changed to a corollary}

\begin{cor}\label{cor:characgraphs2}
Let $R_n$ denote the rose graph with $n$ circles.
Given a simplicial distribution $p\colon R_n \to D(\Delta_{\ZZ_m})$, we define associated bipartite graphs $H_1, H_2, \dots, H_n$, each with
\begin{itemize}
  \item a left vertex set given by a subset of $\{u_0,\dots,u_{m-1}\}$, 
  \item a right vertex set given by a subset of $\{w_0,\dots,w_{m-1}\}$,
  \item an edge between $u_a$ and $w_b$ in $H_i$ if and only if $p_{\sigma_i}^{ab} \neq 0$.  
\end{itemize}
Then $p$ is a vertex if and only if the following conditions are satisfied:
\begin{itemize}
    \item For every $1 \leq i \leq n$, the graph of $H_i$ contains no cycle;
    \item the rank of $\tilde{Q}(H_1,\dots,H_n)$ is equal to $2m-1$.
\end{itemize}    
\end{cor}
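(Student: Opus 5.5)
The plan is to mirror the proof of Theorem~\ref{thm:bybipartgraphs}, replacing Theorem~\ref{thm:afinoneintersect} with Corollary~\ref{cor:afinoneintersect2222} and the matrix $Q$ with $\tilde{Q}$. Let $A_k$ be defined by~\eqref{eq:Adef2222}, so that $H_k=H(A_k)$. By Corollary~\ref{cor:afinoneintersect2222}, $p$ is a vertex if and only if each $A_k$ is affinely independent and the set
\[
S:=\bigcap_{k=1}^n \Conv(A_k)\cap\Bigl\{(\alpha,\alpha)^T:\textstyle\sum_i\alpha_i=1\Bigr\}
\]
is a single point with all coefficients nonzero. By Proposition~\ref{pro:affinHXB}, affine independence of $A_k$ is equivalent to $H_k$ being acyclic, which gives the first condition.

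Assume now that all $H_k$ are acyclic. The point $u:=p|_{\ast}$, placed in both halves as $(p_\ast,p_\ast)^T$, lies in $S$: its representation in $A_k$ has coefficients $p^{ab}_{\sigma_k}$, and these are nonzero exactly on $A_k$. Hence the nonzero-coefficient requirement is automatic, and $u$ is the only candidate point.

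Let $V$ be the linear subspace $\bigcap_k\operatorname{span}(A_k)\cap\{(\alpha,\alpha)^T\}$. By Corollary~\ref{cor:spaAitildeQA}, read as the kernel statement without the normalization, $V=\ker\tilde{Q}(H_1,\dots,H_n)$. Note that the normalization $\sum\alpha_i=1$ is affine, while the kernel is linear. The rank condition, rank $2m-1$, is therefore equivalent to $V$ being one-dimensional, that is, $V=\operatorname{span}(u)$ since $u\in V$ is nonzero. It remains to show that $S=\{u\}$ if and only if $V=\operatorname{span}(u)$.

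For the direction ($\Leftarrow$): any $q\in S$ lies in $V$, so $q=cu$. Comparing coordinate sums (both equal $2$) gives $c=1$.

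For the direction ($\Rightarrow$), argue by contraposition. Suppose $\tilde x\in V$ is not a multiple of $u$. Take the coefficients $\gamma^{ab}_{k,\tilde x}$ of $\tilde x$ in the unique linear expansions over each $A_k$. Since the coefficients of $u$ are strictly positive on every $A_k$, choose $t$ large enough that $z=\tilde x+tu$ has nonnegative coefficients in every $A_k$ and positive total coordinate sum. Normalizing $z$ gives a point $z'$ of each $\Conv(A_k)$, exactly as in the proof of Theorem~\ref{thm:bybipartgraphs}, using that the coefficient sum equals $\sum_i\alpha_i$. Also $z'$ still has equal halves, since $V$ is linear, and $\sum\alpha_i=1$. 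So $z'\in S$, and $z'\neq u$ because $\tilde x\notin\operatorname{span}(u)$.

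The main obstacle is bookkeeping: one has to confirm that the diagonal constraint rows $I_m\;-I_m$ in $\tilde{Q}$ cut out precisely the equal-halves subspace, and that this subspace is preserved by the rescaling step. Both hold because the constraint is linear.
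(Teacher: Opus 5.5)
Your proposal is correct and is the argument the paper intends: the paper proves this corollary only by citing Corollary~\ref{cor:afinoneintersect2222}, Proposition~\ref{pro:affinHXB} and Corollary~\ref{cor:spaAitildeQA} and saying the proof is analogous to Theorem~\ref{thm:bybipartgraphs}, and you carry out that argument with $\tilde{Q}$ in place of $Q$. You are right to read Corollary~\ref{cor:spaAitildeQA} as the unnormalized identity $V=\ker\tilde{Q}(H_1,\dots,H_n)$, since as printed its left side is affine and its right side linear; note that this identity, via Proposition~\ref{pro:spaAiQA}, requires isolated vertices $u_a,w_a$ (those with $p_\ast^a=0$) to be counted as connected components, so that they contribute rows to $\tilde{Q}$.
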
 

\begin{figure}[ht]
\centering
\begin{tikzpicture}[x=1cm,y=1cm,scale=.8]

% common coordinates for each panel:
% left side  : y =  2.1,  0.7, -0.7, -2.1
% right side : y =  2.1,  0.7, -0.7, -2.1

% ---------------- first graph ----------------
\begin{scope}[shift={(0,0)}]
  \coordinate (L1) at (0,  2.1);
  \coordinate (L2) at (0,  0.7);
  \coordinate (L3) at (0, -0.7);
  \coordinate (L4) at (0, -2.1);
  \coordinate (R1) at (3.2,  2.1);
  \coordinate (R2) at (3.2,  0.7);
  \coordinate (R3) at (3.2, -0.7);
  \coordinate (R4) at (3.2, -2.1);

  % blue component
  \draw[line width=0.9pt, cyan!70!blue] (L1)--(R1);
  \draw[line width=0.9pt, cyan!70!blue] (L1)--(R2);
  \draw[line width=0.9pt, cyan!70!blue] (L3)--(R1);
  \draw[line width=0.9pt, cyan!70!blue] (L3)--(R4);

  % orange component
  \draw[line width=0.9pt, orange!90!yellow] (L2)--(R3);
  \draw[line width=0.9pt, orange!90!yellow] (L4)--(R3);

  \foreach \v in {L1,L2,L3,L4,R1,R2,R3,R4}{
    \node[circle,fill=black,inner sep=1.1pt] at (\v) {};
  }
\end{scope}

% ---------------- second graph ----------------
\begin{scope}[shift={(5.8,0)}]
  \coordinate (L1) at (0,  2.1);
  \coordinate (L2) at (0,  0.7);
  \coordinate (L3) at (0, -0.7);
  \coordinate (L4) at (0, -2.1);
  \coordinate (R1) at (3.2,  2.1);
  \coordinate (R2) at (3.2,  0.7);
  \coordinate (R3) at (3.2, -0.7);
  \coordinate (R4) at (3.2, -2.1);

  % green component
  \draw[line width=0.9pt, green!70!black] (L1)--(R3);
  \draw[line width=0.9pt, green!70!black] (L1)--(R4);

  % red component
  \draw[line width=0.9pt, red] (L2)--(R2);

  % brown component
  \draw[line width=0.9pt, brown!75!orange] (L3)--(R1);
  \draw[line width=0.9pt, brown!75!orange] (L4)--(R1);

  \foreach \v in {L1,L2,L3,L4,R1,R2,R3,R4}{
    \node[circle,fill=black,inner sep=1.1pt] at (\v) {};
  }
\end{scope}

% ---------------- third graph ----------------
\begin{scope}[shift={(11.6,0)}]
  \coordinate (L1) at (0,  2.1);
  \coordinate (L2) at (0,  0.7);
  \coordinate (L3) at (0, -0.7);
  \coordinate (L4) at (0, -2.1);
  \coordinate (R1) at (3.2,  2.1);
  \coordinate (R2) at (3.2,  0.7);
  \coordinate (R3) at (3.2, -0.7);
  \coordinate (R4) at (3.2, -2.1);

  % gray component
  \draw[line width=0.9pt, gray!60] (L1)--(R2);
  \draw[line width=0.9pt, gray!60] (L1)--(R3);
  \draw[line width=0.9pt, gray!60] (L4)--(R2);

  % yellow component
  \draw[line width=0.9pt, yellow!85!black] (L2)--(R1);
  \draw[line width=0.9pt, yellow!85!black] (L2)--(R4);
  \draw[line width=0.9pt, yellow!85!black] (L3)--(R1);

  \foreach \v in {L1,L2,L3,L4,R1,R2,R3,R4}{
    \node[circle,fill=black,inner sep=1.1pt] at (\v) {};
  }
\end{scope}

\end{tikzpicture}
\caption{Bipartite graphs associated to the simplicial distribution from Example~\ref{ex:Thirdex}.}
\label{Fig:Bipartitegrphs2}
\end{figure}
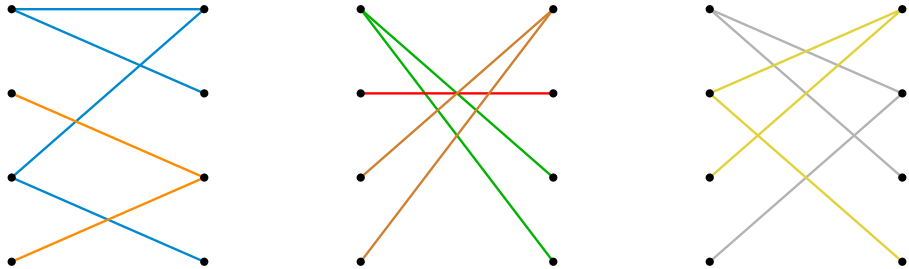
%\begin{figure}[h!] 
%  \centering
%  \includegraphics[width=.7\linewidth]{bipartitegraphs2.png}
%\caption{Bipartite graphs associated to the simplicial distribution from Example~\ref{ex:Thirdex}.}
%\label{Fig:Bipartitegrphs2}
%\end{figure} 

\begin{ex}
Using Corollary~\ref{cor:characgraphs2}, we can prove 
%of the fact
 that the simplicial distribution $p\colon R_3 \to D(\Delta_{\ZZ_4})$ from Example~\ref{ex:Thirdex} is a vertex.
%We explain why the simplicial distribution
%$p\colon C^{(1)} \vee C^{(1)} \vee C^{(1)} \to D(\Delta_{\ZZ_4})$
%from Example~\ref{ex:Thirdex} is a vertex.
The corresponding bipartite graphs are shown in
Figure~\ref{Fig:Bipartitegrphs2}. Clearly, none of the bipartite graphs contains a cycle.
%contains a cycle.
Moreover, 
%$\tilde{Q}(H_1,H_2,H_3)$ is the following 
we obtain the
$11\times 8$ matrix:
$$
\tilde{Q}(H_1,H_2,H_3)=\begin{pmatrix}
1 & 0 & 1 & 0 & -1 & -1 & 0 & -1\\
0 & 1 & 0 & 1 & 0 & 0 & -1 & 0 \\
1 & 0 & 0 & 0 & 0 &  0 & -1 & -1 \\
0 & 1 & 0 & 0 & 0 & -1 & 0 & 0 \\
0 & 0 & 1 & 1 & -1 & 0 & 0 & 0 \\
1 & 0 & 0 & 1 & 0 & -1 & -1 & 0 \\
0 & 1 & 1 & 0 & -1 & 0 & 0 & -1\\
1 & 0 & 0 & 0 & -1 & 0 & 0 & 0 \\
0 & 1 & 0 & 0 & 0 & -1 & 0 & 0 \\
0 & 0 & 1 & 0 & 0 & 0 & -1 & 0 \\
0 & 0 & 0 & 1 & 0 & 0 & 0 & -1 
\end{pmatrix}.
$$
Its rank is $7$ hence, by Corollary~\ref{cor:characgraphs2}, $p$ is a vertex.
\end{ex}

\begin{ex}
Next, we use Corollary~\ref{cor:characgraphs2} to give an alternative proof of the fact that the simplicial distribution $p\colon R_3 \to D(\Delta_{\ZZ_4})$ from Example~\ref{ex:counterex} is a vertex. 
The corresponding bipartite graphs are shown in
Figure~\ref{Fig:Bipartitegrphs3}. Again, these graphs do not contain any cycles,
%there are no cycles,
and
%$\tilde{Q}(H_1,H_2,H_3)$ is the following
we obtain the $12\times 8$ matrix:
$$
\tilde{Q}(H_1,H_2,H_3)=
\begin{pmatrix}
1 & 0 & 1 & 1 & -1 & -1 & 0 & 0 \\
0 & 1 & 0 & 0 & 0 & 0 & -1 & -1 \\
1 & 0 & 0 & 0 & 0 &  -1 & 0 & 0 \\
0 & 1 & 0 & 0 & -1 & 0 & 0 & 0 \\
0 & 0 & 1 & 0 & 0 & 0 & 0 & -1 \\
0 & 0 & 0 & 1 & 0 & 0 & -1 & 0 \\
1 & 0 & 0 & 1 & 0 & -1 & 0 & -1\\
0 & 1 & 1 & 0 & -1 & 0 & -1 & 0 \\
1 & 0 & 0 & 0 & -1 & 0 & 0 & 0 \\
0 & 1 & 0 & 0 & 0 & -1 & 0 & 0 \\
0 & 0 & 1 & 0 & 0 & 0 & -1 & 0 \\
0 & 0 & 0 & 1 & 0 & 0 & 0 & -1 
\end{pmatrix}.
$$
Its rank is $7$, so $p$ is a vertex.   
\end{ex}

\begin{figure}[ht]
\centering
\begin{tikzpicture}[x=1cm,y=1cm,scale=.8]

% common coordinates for each panel:
% left side  : y =  2.1,  0.7, -0.7, -2.1
% right side : y =  2.1,  0.7, -0.7, -2.1

% ---------------- first graph ----------------
\begin{scope}[shift={(0,0)}]
  \coordinate (L1) at (0,  2.1);
  \coordinate (L2) at (0,  0.7);
  \coordinate (L3) at (0, -0.7);
  \coordinate (L4) at (0, -2.1);
  \coordinate (R1) at (3.2,  2.1);
  \coordinate (R2) at (3.2,  0.7);
  \coordinate (R3) at (3.2, -0.7);
  \coordinate (R4) at (3.2, -2.1);

  % blue component
  \draw[line width=0.9pt, cyan!70!blue] (L1)--(R1);
  \draw[line width=0.9pt, cyan!70!blue] (L1)--(R2);
  \draw[line width=0.9pt, cyan!70!blue] (L3)--(R2);
  \draw[line width=0.9pt, cyan!70!blue] (L4)--(R1);

  % orange component
  \draw[line width=0.9pt, orange!90!yellow] (L2)--(R3);
  \draw[line width=0.9pt, orange!90!yellow] (L2)--(R4);

  \foreach \v in {L1,L2,L3,L4,R1,R2,R3,R4}{
    \node[circle,fill=black,inner sep=1.1pt] at (\v) {};
  }
\end{scope}

% ---------------- second graph ----------------
\begin{scope}[shift={(5.8,0)}]
  \coordinate (L1) at (0,  2.1);
  \coordinate (L2) at (0,  0.7);
  \coordinate (L3) at (0, -0.7);
  \coordinate (L4) at (0, -2.1);
  \coordinate (R1) at (3.2,  2.1);
  \coordinate (R2) at (3.2,  0.7);
  \coordinate (R3) at (3.2, -0.7);
  \coordinate (R4) at (3.2, -2.1);

  % upper pair
  \draw[line width=0.9pt, green!70!black] (L1)--(R2);
  \draw[line width=0.9pt, red]              (L2)--(R1);

  % lower pair
  \draw[line width=0.9pt, brown!75!orange] (L3)--(R4);
  \draw[line width=0.9pt, gray!60]         (L4)--(R3);

  \foreach \v in {L1,L2,L3,L4,R1,R2,R3,R4}{
    \node[circle,fill=black,inner sep=1.1pt] at (\v) {};Example 4.20
  }
\end{scope}

% ---------------- third graph ----------------
\begin{scope}[shift={(11.6,0)}]
  \coordinate (L1) at (0,  2.1);
  \coordinate (L2) at (0,  0.7);
  \coordinate (L3) at (0, -0.7);
  \coordinate (L4) at (0, -2.1);
  \coordinate (R1) at (3.2,  2.1);
  \coordinate (R2) at (3.2,  0.7);
  \coordinate (R3) at (3.2, -0.7);
  \coordinate (R4) at (3.2, -2.1);

  % yellow component
  \draw[line width=0.9pt, yellow!85!black] (L1)--(R2);
  \draw[line width=0.9pt, yellow!85!black] (L1)--(R4);
  \draw[line width=0.9pt, yellow!85!black] (L4)--(R2);

  % purple component
  \draw[line width=0.9pt, violet] (L2)--(R1);
  \draw[line width=0.9pt, violet] (L2)--(R3);
  \draw[line width=0.9pt, violet] (L3)--(R1);

  \foreach \v in {L1,L2,L3,L4,R1,R2,R3,R4}{
    \node[circle,fill=black,inner sep=1.1pt] at (\v) {};
  }
\end{scope}

\end{tikzpicture}
\caption{Bipartite graphs associated to the simplicial distribution from Example~\ref{ex:counterex}.}
\label{Fig:Bipartitegrphs3}
\end{figure}
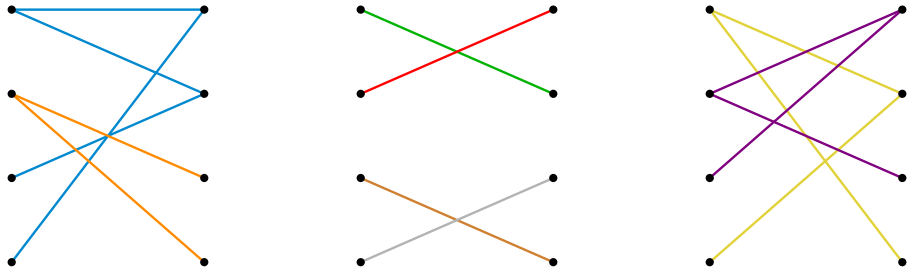
%\begin{figure}[h!] 
%  \centering
%  \includegraphics[width=.7\linewidth]{bipartitegraphs3.png}
%\caption{Bipartite graphs associated to the simplicial distribution from Example~\ref{ex:counterex}.}
%\label{Fig:Bipartitegrphs3}
%\end{figure} 

In the bipartite graphs arising from Example~\ref{ex:Thirdex} (Figure~\ref{Fig:Bipartitegrphs2}), we have
$\operatorname{rank} Q(H_1,H_2,H_3)=5<2m-1=7$. Likewise, for the bipartite graphs in Example~\ref{ex:counterex}
(Figure~\ref{Fig:Bipartitegrphs3}), $\operatorname{rank} Q(H_1,H_2,H_3)=6<7$. Hence, by
Theorem~\ref{thm:bybipartgraphs}, these collections fail the required rank condition and therefore cannot correspond to
vertices of $\Dist(D_3,{4})$. In contrast, when we replace $Q$ by the rose-adapted matrix $\tilde{Q}$,
both examples satisfy the sharp rank condition $\operatorname{rank}\!\bigl(\tilde{Q}(H_1,H_2,H_3)\bigr)=2m-1=7$
(as verified in Examples~\ref{ex:Thirdex} and~\ref{ex:counterex}), thereby confirming that they do define vertices of
$\Dist(R_3,{4})$.

\section{Counting the number of vertices}\label{sec:1dim}

%
%\begin{ex}
%In Figure \ref{fig:collpsing2 K33}, we collapse the blue edges in the left graph to get 
%a gluing of five edges along their endpoints.
%
%

%\begin{cihan}{purple}{to be revised}
In this section, we demonstrate how the identification of vertices on the rose graph and the dipole graph can be used to detect vertices on connected $1$-dimensional measurement spaces, including physically relevant
bipartite Bell scenarios.

%\end{cihan}

\subsection{Collapsing to the rose graph}

A connected directed (multi)graph can be represented by a one-dimensional simplicial set $X$ (see Example \ref{ex:one dim sdist}).
By collapsing a fixed spanning tree, we obtain a wedge of circles (i.e., the rose scenario).
We denote by $T(X)$ the number of spanning trees in $X$.
According to Kirchhoff's theorem~\cite{kirchhoff1958solution},
this number is given by
\[
T(X)= \frac{1}{k}\lambda_1\cdot \lambda_2 \dots \cdot \lambda_k,
\]
where $k$ is the number of nodes and $\lambda_1,\dots,\lambda_k$ are the nonzero eigenvalues of the Laplacian matrix of the graph.

\begin{defn}
We define $\kappa^{(1)}(n,m)$ as the number of vertices in
$\Dist(R_n,{m})$, and define $\tilde{\kappa}^{(1)}(n,m)$ as the number of contextual vertices in
$\Dist(R_n,{m})$ that do not contain any collapsed distribution (Definition~\ref{def:CollapDist}) on any circle.
\end{defn}
\begin{ex}
There are only two deterministic distributions in
$\Dist(R_n,{2})$, and by Example~\ref{ex:contverwedgez2} there are $2^n-1$ contextual vertices.
Thus $\kappa^{(1)}(n,2)=2^n+1$. In addition, we have $\tilde{\kappa}^{(1)}(n,2)=1$.
\end{ex}

\begin{ex}\label{ex:K33ZZ3wedgecycles}
Using a computer computation we obtained
%algorithm we computed the following:
%\ak{$\kappa^{(1)}(2,3)$, $\kappa^{(1)}(3,3)$, and $\kappa^{(1)}(4,3)$ computed by Talay:}
\[
\kappa^{(1)}(2,3)=56 \; , \; \kappa^{(1)}(3,3)=488 \; , \; \kappa^{(1)}(4,3)=4088.
%\; , \; \kappa^{(1)}(5,3)= 32528
\]
Using these results, we can compute $\tilde{\kappa}^{(1)}(4,3)$.
First, we count the vertices of $\Dist(R_4,{3})$ having at least one collapsed distribution.
Let us denote the generating $1$-simplices of $R_4$ by $\sigma_1,\sigma_2,\sigma_3,\sigma_4$.
We have the following:
\begin{enumerate}
\item For $1\leq i \leq 4$, the number of vertices in $\Dist(R_4,{3})$ whose restriction to $\sigma_i$
is a collapsed distribution is equal to $\kappa^{(1)}(3,3)=488$.
\item For $1\leq i<j \leq 4$, the number of vertices in $\Dist(R_4,{3})$ whose restrictions to $\sigma_i$ and $\sigma_j$
are collapsed distributions is equal to $\kappa^{(1)}(2,3)=56$.
\item For $1\leq i<j<k \leq 4$, the number of vertices in $\Dist(R_4,{3})$ whose restrictions to $\sigma_i$, $\sigma_j$, and $\sigma_k$
are collapsed distributions is equal to $\kappa^{(1)}(1,3)$, which according to~\cite[Corollary~4.5]{kharoof2024extremal} satisfies
\begin{equation}\label{eq:mu13}
\kappa^{(1)}(1,3) = \sum_{k=1}^3 \binom{3}{k} (k-1)! = 8.
\end{equation}
\item The vertices in $\Dist(R_4,{3})$ with four collapsed distributions
are exactly the three deterministic distributions.
\end{enumerate}
Using the inclusion--exclusion principle, there are
\[
\binom{4}{1} \cdot 488 - \binom{4}{2} \cdot 56 + \binom{4}{3} \cdot 8 -\binom{4}{4} \cdot 3
=1952-336+32-3=1645
\]
vertices of $\Dist(R_4,{3})$ with at least one collapsed distribution. Therefore
\[
\tilde{\kappa}^{(1)}(4,3)=\kappa^{(1)}(4,3)-1645=4088-1645=2443.
\]
\end{ex}

\begin{pro}\label{pro:minvertwedge}
Let $X=(V,E)$ be a directed graph. Then there are at least \[\tilde{\kappa}^{(1)}(|E|-|V|+1,m)\cdot T(X)\]
contextual vertices in $\Dist(X,{m})$.
\end{pro}
\begin{proof}
Let $T_1$ be a spanning tree of $X$. By collapsing the $|V|-1$ edges of $T_1$, all nodes are identified, and we obtain the 
%collapsing 
collapsed
space
$X/T_1$ as the rose scenario $R_{|E|-(|V|-1)} = R_{|E|-|V|+1}$ (see Definition~\ref{def:collapsing} and Figure~\ref{fig:collpsing K33}).
Let $\pi_1 \colon X \to X / T_1$ be the corresponding collapsing map.
By part~(2) of Proposition~\ref{pro:CollapDist}, for every contextual vertex
$p$ in $\Dist(X / T_1, {m})$, the simplicial distribution $\pi_1^\ast(p)$ is a contextual vertex in $\Dist(X,{m})$.
Since the map $\pi_1^\ast$ is injective, and by part~(1) of Proposition~\ref{pro:CollapDist}, we obtain
$\tilde{\kappa}^{(1)}(|E|-|V|+1,m)$ contextual vertices $q$ in $\Dist(X,{m})$ such that, for every edge $\tau$ in $X$, the distribution
$q_{\tau}$  is collapsed if and only if  $\tau \in T_1$.

Now, for another spanning tree $T_2 \neq T_1$, we obtain another set of $\tilde{\kappa}^{(1)}(|E|-|V|+1,m)$ contextual vertices in 
$\Dist(X,{m})$.
Therefore, there are at least $\tilde{\kappa}^{(1)}(|E|-|V|+1,m)\cdot T(X)$ contextual vertices in $\Dist(X,{m})$.
\end{proof}

\begin{figure}[ht]
\centering

\begin{minipage}{0.52\textwidth}
\centering
\begin{tikzpicture}[x=1cm,y=1cm]
  % vertices for a deformed drawing of K_{3,3}
  \coordinate (v1) at (0,  1.2);
  \coordinate (v2) at (0, -1.2);
  \coordinate (v3) at (2.0,  1.2);
  \coordinate (v4) at (2.0, -1.2);
  \coordinate (v5) at (4.3,  2.0);
  \coordinate (v6) at (4.3, -2.0);

  % black edges (the four edges surviving after collapse)
  \draw[line width=0.9pt] (v1)--(v2);
  \draw[line width=0.9pt] (v1) .. controls (1.4,1.9) and (3.0,2.2) .. (v5);
  \draw[line width=0.9pt] (v2) .. controls (1.4,-1.9) and (3.0,-2.2) .. (v6);
  \draw[line width=0.9pt] (v5) .. controls (4.9,0.9) and (4.9,-0.9) .. (v6);

  % blue spanning tree to be collapsed
  \draw[line width=1.1pt, cyan!70!blue] (v1)--(v3);
  \draw[line width=1.1pt, cyan!70!blue] (v2)--(v4);
  \draw[line width=1.1pt, cyan!70!blue] (v3)--(v4);
  \draw[line width=1.1pt, cyan!70!blue] (v3) .. controls (3.1,0.8) and (4.0,-0.2) .. (v6);
  \draw[line width=1.1pt, cyan!70!blue] (v4) .. controls (3.1,-0.8) and (4.0,0.2) .. (v5);

  \foreach \v in {v1,v2,v3,v4,v5,v6}{
    \node[circle,fill=black,inner sep=1.1pt] at (\v) {};
  }
\end{tikzpicture}

\smallskip

(a)
\end{minipage}
\hfill
\begin{minipage}{0.34\textwidth}
\centering
\begin{tikzpicture}[x=1cm,y=1cm]
  \coordinate (O) at (0,0);

  % top petal
  \draw[line width=0.9pt]
    (O) .. controls (-0.08,0.14) and (-0.55,0.95) .. (-0.22,1.42)
        .. controls (-0.08,1.62) and ( 0.08,1.62) .. ( 0.22,1.42)
        .. controls ( 0.55,0.95) and ( 0.08,0.14) .. (O);

  % right petal
  \draw[line width=0.9pt]
    (O) .. controls (0.14, 0.08) and (0.95, 0.55) .. (1.42, 0.22)
        .. controls (1.62, 0.08) and (1.62,-0.08) .. (1.42,-0.22)
        .. controls (0.95,-0.55) and (0.14,-0.08) .. (O);

  % bottom petal
  \draw[line width=0.9pt]
    (O) .. controls ( 0.08,-0.14) and ( 0.55,-0.95) .. ( 0.22,-1.42)
        .. controls ( 0.08,-1.62) and (-0.08,-1.62) .. (-0.22,-1.42)
        .. controls (-0.55,-0.95) and (-0.08,-0.14) .. (O);

  % left petal
  \draw[line width=0.9pt]
    (O) .. controls (-0.14,-0.08) and (-0.95,-0.55) .. (-1.42,-0.22)
        .. controls (-1.62,-0.08) and (-1.62, 0.08) .. (-1.42, 0.22)
        .. controls (-0.95, 0.55) and (-0.14, 0.08) .. (O);

  \node[circle,fill=black,inner sep=1.1pt] at (O) {};
\end{tikzpicture}

\smallskip

(b)
\end{minipage}

\caption{(a) The complete bipartite graph \(K_{3,3}\). (b) The rose graph \(R_4\). We collapse the blue edges in \(K_{3,3}\) to obtain \(R_4\).}
\label{fig:collpsing K33}
\end{figure}
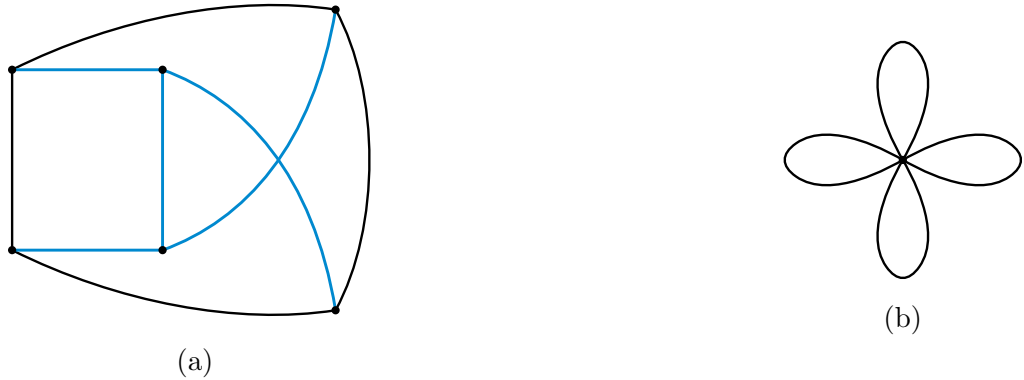

%\begin{figure}[h!]
%\centering
%\begin{subfigure}{.44\textwidth}
%  \centering
%  \includegraphics[width=.6\linewidth]{collpsing K33 left.png}
%  \caption{}
%  \label{fig:collpsing K33 left}
%\end{subfigure}%
%\begin{subfigure}{.32\textwidth}
%  \centering
%  \includegraphics[width=.6\linewidth]{collpsing K33 right.png}
%  \caption{}
%  \label{fig:collpsing K33 right}
%\end{subfigure}
%\caption{{(a) The bipartite graph $K_{3,3}$. (b) The rose graph $R_4$. We collapse the blue edges in $K_{3,3}$ to obtain $R_4$.}  
%}
%\label{fig:collpsing K33}
%\end{figure} 

\begin{cor}\label{cor:bipartiwedge}
Let $K_{n_1,n_2}$ denote the complete bipartite graph.
There are at least 
\[\tilde{\kappa}^{(1)}\!\left((n_1-1)(n_2-1),m\right)\cdot n_1^{n_2-1}n_2^{n_1-1}\]
contextual vertices in $\Dist(K_{n_1,n_2},{m})$.
% (see Definition~\ref{def:Kn1n2}).
\end{cor}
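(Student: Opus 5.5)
This is a direct specialization of Proposition~\ref{pro:minvertwedge} to $X=K_{n_1,n_2}$. Two quantities need to be computed: the first Betti number $|E|-|V|+1$ and the number of spanning trees $T(K_{n_1,n_2})$. Once these are in hand, we substitute them into the bound $\tilde{\kappa}^{(1)}(|E|-|V|+1,m)\cdot T(X)$ of Proposition~\ref{pro:minvertwedge}.

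For the first quantity, recall from Definition~\ref{def:Kn1n2} that $K_{n_1,n_2}$ has vertex set $A\sqcup B$, so $|V|=n_1+n_2$. It has one generating edge $\sigma_{x,y}$ for each pair $(x,y)\in A\times B$, so $|E|=n_1n_2$. Hence
\[
|E|-|V|+1=n_1n_2-n_1-n_2+1=(n_1-1)(n_2-1).
\]
Collapsing any spanning tree therefore produces the rose graph $R_{(n_1-1)(n_2-1)}$. The case $K_{3,3}\to R_4$ in Figure~\ref{fig:collpsing K33} illustrates this.

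For the second quantity, we need the classical formula $T(K_{n_1,n_2})=n_1^{n_2-1}n_2^{n_1-1}$ (Scoins' formula). The plan is to derive it from the Kirchhoff formula quoted before Proposition~\ref{pro:minvertwedge}, which requires the Laplacian spectrum of $K_{n_1,n_2}$.
\begin{itemize}
\item The Laplacian is $L=\begin{pmatrix} n_2 I_{n_1} & -J\\ -J^T & n_1 I_{n_2}\end{pmatrix}$, where $J$ is the $n_1\times n_2$ all-ones matrix.
\item Vectors supported on $A$ with zero coordinate sum are eigenvectors with eigenvalue $n_2$; this gives multiplicity $n_1-1$.
\item Vectors supported on $B$ with zero coordinate sum are eigenvectors with eigenvalue $n_1$; this gives multiplicity $n_2-1$.
\item The remaining two eigenvectors are constant on $A$ and constant on $B$. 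One of them is the all-ones vector, with eigenvalue $0$. The other is $(n_2\mathbf 1_A,-n_1\mathbf 1_B)$, with eigenvalue $n_1+n_2$.
\end{itemize}
Kirchhoff's formula then gives
\[
T=\frac{1}{n_1+n_2}\,n_2^{\,n_1-1}\,n_1^{\,n_2-1}\,(n_1+n_2)=n_1^{n_2-1}n_2^{n_1-1}.
\]

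There is one subtlety to check. Proposition~\ref{pro:minvertwedge} distinguishes spanning trees as sets of edges. In $K_{n_1,n_2}$ there are no multiple edges, so the Kirchhoff count of labelled spanning trees coincides with the count used in that proof. The edge orientations, from $A$ to $B$, do not affect which edge sets form spanning trees.

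The only step needing care is this eigenvalue computation; everything else is substitution. Combining the two computations with Proposition~\ref{pro:minvertwedge} yields at least $\tilde{\kappa}^{(1)}((n_1-1)(n_2-1),m)\cdot n_1^{n_2-1}n_2^{n_1-1}$ contextual vertices in $\Dist(K_{n_1,n_2},m)$.
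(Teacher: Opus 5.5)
Your proposal is correct and matches the paper's proof: both specialize Proposition~\ref{pro:minvertwedge} to $K_{n_1,n_2}$, using $|E|-|V|+1=(n_1-1)(n_2-1)$ and $T(K_{n_1,n_2})=n_1^{n_2-1}n_2^{n_1-1}$. The only difference is that the paper cites the spanning-tree count (Harary), whereas you derive it from the Laplacian spectrum via Kirchhoff's theorem; your eigenvalue computation is correct.
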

\begin{proof}
The complete bipartite graph $K_{n_1,n_2}$ has $n_1+n_2$ nodes and $n_1n_2$ edges, so $|E|-|V|+1$ is equal to
\[
n_1n_2-n_1-n_2+1=(n_1-1)(n_2-1).
\]
Furthermore, there are exactly $n_1^{n_2-1}n_2^{n_1-1}$ spanning trees in $K_{n_1,n_2}$ (see~\cite{harary1969graph}).
The result now follows from Proposition~\ref{pro:minvertwedge}.
\end{proof}
\begin{ex}
By Corollary~\ref{cor:bipartiwedge} and Example~\ref{ex:K33ZZ3wedgecycles}, there are at least
\[
\tilde{\kappa}^{(1)}(4,3)\cdot 3^2 3^2=197883
\]
contextual vertices in $\Dist(K_{3,3},{3})$.
\end{ex}

\subsection{Collapsing to the dipole graph}

Next, we apply the collapsing method to show how knowing the number of vertices in the distribution polytope of the dipole graph
yields a lower bound on the number of contextual vertices in bipartite scenarios.

\begin{defn}
We define $\kappa^{(2)}(n,m)$ as the number of vertices of $\Dist(D_n,{m})$, and
$\tilde{\kappa}^{(2)}(n,m)$ as the number of contextual vertices of $\Dist(D_n,{m})$ that do not contain any collapsed distribution on any edge.
\end{defn}

\begin{ex}
We have $\kappa^{(2)}(1,m) = m^2$ and $\tilde{\kappa}^{(2)}(1,m)=0$, since there are no contextual vertices in 
$\Dist(\Delta^1,{m})$.
\end{ex}

\begin{ex}\label{ex:K33ZZ3edges}
Using computer computation we obtain
%\ak{$\kappa^{(2)}(3,3)$ computed by Talay}
\[
\kappa^{(2)}(3,3)=561.
\]
To compute $\tilde{\kappa}^{(2)}(3,3)$, note first that for each $i \in \{1,2,3\}$, according to part~(2) of Proposition~\ref{pro:CollapDist},
the number of vertices of $\Dist(D_3,{3})$ whose distribution on $\tau_i$
is collapsed equals $\kappa^{(1)}(2,3)=56$ (see Example~\ref{ex:K33ZZ3wedgecycles}).
On the other hand, for $1\leq i<j \leq 3$, the number of vertices with collapsed distributions on both $\tau_i$ and $\tau_j$
is equal to $\kappa^{(1)}(1,3)=8$ (see Equation~\ref{eq:mu13}).
In addition, the vertices of $\Dist(D_3,{3})$ whose distributions on all three edges
are collapsed are precisely three deterministic distributions (those assigning a common outcome to the two nodes).

Therefore, by the inclusion--exclusion principle, the number of vertices with at least one collapsed distribution is
\[
\binom{3}{1} \cdot 56 - \binom{3}{2} \cdot 8 + \binom{3}{3} \cdot 3 = 168 - 24 + 3 = 147.
\]
Hence, the number of vertices without any collapsed distribution is
\[
\kappa^{(2)}(3,3) - 147 = 561 - 147 = 414.
\]
Finally, subtracting the number of deterministic distributions without any collapsed distribution, we obtain
\[
\tilde{\kappa}^{(2)}(3,3) = 414 - 3 \cdot 2 = 408.
\]
\end{ex}

\begin{figure}[ht]
\centering

\begin{minipage}{0.48\textwidth}
\centering
\begin{tikzpicture}[x=1cm,y=1cm]
  % standard K_{3,3} layout
  \coordinate (L1) at (0,  1.6);
  \coordinate (L2) at (0,  0);
  \coordinate (L3) at (0, -1.6);
  \coordinate (R1) at (4.0,  1.6);
  \coordinate (R2) at (4.0,  0);
  \coordinate (R3) at (4.0, -1.6);

  % blue forest whose contraction gives the two vertices of D_5
  \draw[line width=1.1pt, cyan!70!blue] (L1)--(R1);
  \draw[line width=1.1pt, cyan!70!blue] (L1)--(R2);
  \draw[line width=1.1pt, cyan!70!blue] (L2)--(R3);
  \draw[line width=1.1pt, cyan!70!blue] (L3)--(R3);

  % remaining five edges
  \draw[line width=0.9pt] (L1)--(R3);
  \draw[line width=0.9pt] (L2)--(R1);
  \draw[line width=0.9pt] (L2)--(R2);
  \draw[line width=0.9pt] (L3)--(R1);
  \draw[line width=0.9pt] (L3)--(R2);

  \foreach \v in {L1,L2,L3,R1,R2,R3}{
    \node[circle,fill=black,inner sep=1.1pt] at (\v) {};
  }
\end{tikzpicture}

\smallskip

(a)
\end{minipage}
\hfill
\begin{minipage}{0.34\textwidth}
\centering
\begin{tikzpicture}[x=1cm,y=1cm]
  \coordinate (A) at (0,0);
  \coordinate (B) at (3.6,0);

  \draw[line width=0.9pt] (A) to[bend left=60] (B);
  \draw[line width=0.9pt] (A) to[bend left=30] (B);
  \draw[line width=0.9pt] (A) -- (B);
  \draw[line width=0.9pt] (A) to[bend right=30] (B);
  \draw[line width=0.9pt] (A) to[bend right=60] (B);

  \node[circle,fill=black,inner sep=1.1pt] at (A) {};
  \node[circle,fill=black,inner sep=1.1pt] at (B) {};
\end{tikzpicture}

\smallskip

(b)
\end{minipage}

\caption{(a) The complete bipartite graph \(K_{3,3}\). (b) The dipole graph \(D_5\). We collapse the blue edges in \(K_{3,3}\) to obtain \(D_5\).}
\label{fig:collpsing2 K33}
\end{figure}
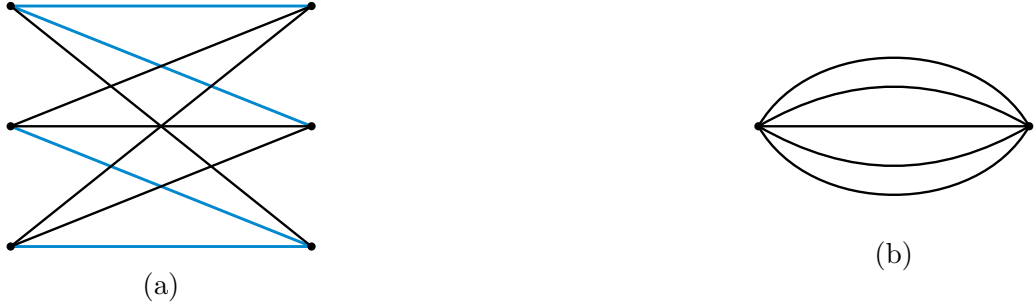
%\begin{figure}[h!]
%\centering
%\begin{subfigure}{.33\textwidth}
%  \centering
%  \includegraphics[width=.6\linewidth]{Collapsing2 K33 left.png}
%  \caption{}
%  \label{fig:Collapsing2 K33 left}
%\end{subfigure}%
%\begin{subfigure}{.32\textwidth}
%  \centering
%  \includegraphics[width=.6\linewidth]{5 edges.png}
%  \caption{}
%  \label{fig:5 edges}
%\end{subfigure}
%\caption{{(a) The bipartite graph $K_{3,3}$. (b) the dipole graph $D_5$. We collapse the blue edges in $K_{3,3}$ to obtain $D_5$.
%} 
%}
%\label{fig:collpsing2 K33}
%\end{figure}

\begin{pro}\label{pro:bipartiedges}
There are at least
\[
\sum_{\substack{0 < k < n_1 \\ 0 < r < n_2}}
\binom{n_1}{k} \cdot \binom{n_2}{r} \cdot
\tilde{\kappa}^{(2)}\!\left(k(n_2 - r) + (n_1 - k)r, m\right)
\]
contextual vertices in $\Dist(K_{n_1,n_2},{m})$.
\end{pro}

 \begin{proof}
Let $X = K_{A,B}$ be a complete bipartite graph, where $|A| = n_1$ and $|B| = n_2$.
Consider subsets $A_1, A_2 \subseteq A$ and $B_1, B_2 \subseteq B$ such that:
\begin{itemize}
    \item $A = A_1 \sqcup A_2$ and $B = B_1 \sqcup B_2$;
    \item $|A_1| = k$, $|B_1| = r$, with $0 < k < n_1$ and $0 < r < n_2$.
\end{itemize}

Now collapse all edges between $A_1$ and $B_1$, and between $A_2$ and $B_2$.
This yields a new graph $X'$, formed by gluing together
\[
|A_1|\cdot |B_2| + |A_2|\cdot |B_1| = k(n_2 - r) + (n_1 - k)r
\]
edges through two points (see Figure~\ref{fig:collpsing2 K33}), i.e., $X' \cong D_{k(n_2-r)+(n_1-k)r}$.
Let $\pi \colon X \to X'$ denote the corresponding collapsing map.

By part~(2) of Proposition~\ref{pro:CollapDist}, for each contextual vertex
$p \in \Dist(X', {m})$, the pullback $\pi^\ast(p)$
is a contextual vertex in $\Dist(X, {m})$.
Since $\pi^\ast$ is injective, part~(1) of Proposition~\ref{pro:CollapDist} implies that there are at least
\[
\tilde{\kappa}^{(2)}\!\left(k(n_2 - r) + (n_1 - k)r, m\right)
\]
contextual vertices $q$ in $\Dist(X, {m})$ satisfying the property that,
for every edge $\tau$ in $X$,
$q_{\tau}$ is a collapsed distribution if and only if 
$\tau$ is an edge between  $A_1$  and $B_1$
 or between $A_2$  and  $B_2$.
Summing over all valid choices of $(k,r)$, and noting that the number of such choices is
$\binom{n_1}{k}\binom{n_2}{r}$, the result follows.
\end{proof}

\begin{ex}
By Proposition~\ref{pro:bipartiedges} and Example~\ref{ex:K33ZZ3edges},
we find that in $\Dist(K_{3,2}, {3})$ there are at least
\[
 \binom{3}{1} \cdot \binom{2}{1} \cdot \tilde{\kappa}^{(2)}(3,3)
 +  \binom{3}{2} \cdot \binom{2}{1} \cdot \tilde{\kappa}^{(2)}(3,3)
 = 12 \cdot \tilde{\kappa}^{(2)}(3,3)
 = 12 \cdot 408 = 4896
\]
contextual vertices.
\end{ex}

\bibliography{bib.bib}

@article{bermejo2017contextuality,
  title={Contextuality as a resource for models of quantum computation with qubits},
  author={Bermejo-Vega, Juan and Delfosse, Nicolas and Browne, Dan E and Okay, Cihan and Raussendorf, Robert},
  journal={Physical review letters},
  volume={119},
  number={12},
  pages={120505},
  year={2017},
  publisher={APS}
}

@article{howard2014contextuality,
  title={Contextuality supplies the ‘magic’for quantum computation},
  author={Howard, Mark and Wallman, Joel and Veitch, Victor and Emerson, Joseph},
  journal={Nature},
  volume={510},
  number={7505},
  pages={351--355},
  year={2014},
  publisher={Nature Publishing Group UK London}
}

@article{budroni2022kochen,
  title={Kochen-specker contextuality},
  author={Budroni, Costantino and Cabello, Ad{\'a}n and G{\"u}hne, Otfried and Kleinmann, Matthias and Larsson, Jan-{\AA}ke},
  journal={Reviews of Modern Physics},
  volume={94},
  number={4},
  pages={045007},
  year={2022},
  publisher={APS}
}

@article{raussendorf2009contextuality,
  title = {Contextuality in measurement-based quantum computation},
  author = {Raussendorf, Robert},
  journal = {Phys. Rev. A},
  volume = {88},
  issue = {2},
  pages = {022322},
  numpages = {7},
  year = {2013},
  month = {Aug},
  publisher = {American Physical Society},
  doi = {10.1103/PhysRevA.88.022322},
  url = {https://link.aps.org/doi/10.1103/PhysRevA.88.022322}
}

@book{brualdi2006combinatorial,
  title={Combinatorial matrix classes},
  author={Brualdi, Richard A},
  volume={13},
  year={2006},
  publisher={Cambridge University Press}
}

@article{bolker1976simplicial,
  title={Simplicial geometry and transportation polytopes},
  author={Bolker, Ethan D},
  journal={Transactions of the American Mathematical Society},
  volume={217},
  pages={121--142},
  year={1976}
}

@inproceedings{prusa2013universality,
  title={Universality of the local marginal polytope},
  author={Prusa, Daniel and Werner, Tomas},
  booktitle={Proceedings of the IEEE Conference on Computer Vision and Pattern Recognition},
  pages={1738--1743},
  year={2013}
}

@article{okay2025polyhedral,
  title={Polyhedral Classical Simulators for Quantum Computation},
  author={Okay, Cihan},
  journal={arXiv preprint arXiv:2510.07540},
  year={2025}
}

@article{de2013combinatorics,
  title={Combinatorics and geometry of transportation polytopes: An update.},
  author={De Loera, Jes{\'u}s A and Kim, Edward D},
  journal={Discrete geometry and algebraic combinatorics},
  volume={625},
  pages={37--76},
  year={2013}
}

@book{bertsimas1997introduction,
  title={Introduction to linear optimization},
  author={Bertsimas, Dimitris and Tsitsiklis, John N},
  volume={6},
  year={1997},
  publisher={Athena scientific Belmont, MA}
}

@article{kharoof2024extremal,
  title={Extremal simplicial distributions on cycle scenarios with arbitrary outcomes},
  author={Kharoof, Aziz and Ipek, Selman and Okay, Cihan},
  journal={Journal of Physics A: Mathematical and Theoretical},
  year={2024}
}

@article{barbosa2023bundle,
  title={A bundle perspective on contextuality: Empirical models and simplicial distributions on bundle scenarios},
  author={Barbosa, Rui Soares and Kharoof, Aziz and Okay, Cihan},
  journal={arXiv preprint arXiv:2308.06336},
  year={2023}
}

@article{kharoof2023homotopical,
title = {Homotopical characterization of strongly contextual simplicial distributions on cone spaces},
journal = {Topology and its Applications},
volume = {352},
pages = {108956},
year = {2024},
issn = {0166-8641},
doi = {https://doi.org/10.1016/j.topol.2024.108956},
url = {https://www.sciencedirect.com/science/article/pii/S016686412400141X},
author = {Aziz Kharoof and Cihan Okay}
}

@article{okay2024twisted,
  title={Twisted simplicial distributions},
  author={Okay, Cihan and Stern, Walker H},
  journal={arXiv preprint arXiv:2403.19808},
  year={2024}
}

@article{kharoof2023topological,
  title={Topological methods for studying contextuality: N-cycle scenarios and beyond},
  author={Kharoof, Aziz and Ipek, Selman and Okay, Cihan},
  journal={Entropy},
  volume={25},
  number={8},
  pages={1127},
  year={2023},
  publisher={MDPI}
}

@book{riehl2017category,
  title={Category theory in context},
  author={Riehl, Emily},
  year={2017},
  publisher={Courier Dover Publications}
}

@article{kharoof2022simplicial,
  author       = {Aziz Kharoof and Cihan Okay},
  title        = {Simplicial distributions, convex categories and contextuality},
  journal      = {Theory and Applications of Categories},
  volume       = {44},
  number       = {13},
  pages        = {372--409},
  year         = {2025}
}

@article{okay2022simplicial,
  doi = {10.22331/q-2023-05-22-1009},
  url = {https://doi.org/10.22331/q-2023-05-22-1009},
  title = {Simplicial quantum contextuality},
  author = {Okay, Cihan and Kharoof, Aziz and Ipek, Selman},
  journal = {{Quantum}},
  issn = {2521-327X},
  publisher = {{Verein zur F{\"{o}}rderung des Open Access Publizierens in den Quantenwissenschaften}},
  volume = {7},
  pages = {1009},
  month = may,
  year = {2023}
}

@article{barrett2005nonlocal,
  title = {Nonlocal correlations as an information-theoretic resource},
  author = {Barrett, Jonathan and Linden, Noah and Massar, Serge and Pironio, Stefano and Popescu, Sandu and Roberts, David},
  journal = {Physical Review A},
  volume = {71},
  issue = {2},
  pages = {022101},
  numpages = {11},
  year = {2005},
  month = {Feb},
  publisher = {APS},
  nonote={doi: \href{https://link.aps.org/doi/10.1103/PhysRevA.71.022101}{10.1103/PhysRevA.71.022101}. arXiv: \href{https://arxiv.org/abs/quant-ph/0404097}{quant-ph/0404097}},
  nodoi = {10.1103/PhysRevA.71.022101},
  nourl = {https://link.aps.org/doi/10.1103/PhysRevA.71.022101},
  noeprint={quant-ph/0404097}
}

@book{goerss2009simplicial,
  title={Simplicial homotopy theory},
  author={Goerss, Paul G and Jardine, John F},
  year={2009},
  publisher={Springer Science \& Business Media}
}

@article{friedman2008elementary,
  title   = {An Elementary Illustrated Introduction to Simplicial Sets},
  author  = {Friedman, Greg},
  journal = {The Rocky Mountain Journal of Mathematics},
  volume  = {42},
  number  = {2},
  pages   = {353--423},
  year    = {2012}
}

@article{abramsky2011sheaf,
  title={The sheaf-theoretic structure of non-locality and contextuality},
  author={Abramsky, Samson and Brandenburger, Adam},
  journal={New Journal of Physics},
  volume={13},
  number={11},
  pages={113036},
  year={2011},
  publisher={IOP Publishing},
  nonote={doi: \href{https://doi.org/10.1088/1367-2630/13/11/113036}{10.1088/1367-2630/13/11/113036}. arXiv: \href{https://arxiv.org/abs/1102.0264}{1102.0264}},
  nodoi={10.1088/1367-2630/13/11/113036},
  noeprint={1102.0264},
}

@article{abramsky2016possibilities,
  title={Possibilities determine the combinatorial structure of probability polytopes},
  author={Abramsky, Samson and Barbosa, Rui Soares and Kishida, Kohei and Lal, Raymond and Mansfield, Shane},
  journal={Journal of Mathematical Psychology},
  volume={74},
  pages={58--65},
  year={2016},
  publisher={Elsevier},
  nonote={doi: \href{https://doi.org/10.1016/j.jmp.2016.03.006}{10.1016/j.jmp.2016.03.006}. arXiv: \href{https://arxiv.org/abs/1603.07735}{1603.07735}},
  nodoi={10.1016/j.jmp.2016.03.006},
  nourl={https://doi.org/10.1016/j.jmp.2016.03.006},
  noeprint={1603.07735}
}

@article{brunner2014bell,
  title={Bell nonlocality},
  author={Brunner, Nicolas and Cavalcanti, Daniel and Pironio, Stefano and Scarani, Valerio and Wehner, Stephanie},
  journal={Review of Modern Physics},
  volume={86},
  pages={419},
  year={2014},
  publisher={APS},
  nonote={doi: \href{https://link.aps.org/doi/10.1103/RevModPhys.86.419}{10.1103/RevModPhys.86.419}. arXiv: \href{https://arxiv.org/abs/1303.2849}{1303.2849}},
  nodoi = {10.1103/RevModPhys.86.419},
  nourl = {https://link.aps.org/doi/10.1103/RevModPhys.86.419},
  noeprint={1303.2849}
}

@book{pitowsky1989quantum,
	author = {Itamar Pitowsky},
	year = {1989},
	title = {Quantum Probability Quantum Logic},
	publisher = {Springer}
}

@article{kirchhoff1958solution,
  title={On the solution of the equations obtained from the investigation of the linear distribution of galvanic currents},
  author={Kirchhoff, Gustav},
  journal={IRE transactions on circuit theory},
  volume={5},
  number={1},
  pages={4--7},
  year={1958},
  publisher={IEEE}
}

@book{harary1969graph,
  title     = {Graph Theory},
  author    = {Harary, Frank},
  publisher = {Addison-Wesley},
  address   = {Reading, Massachusetts},
  year      = {1969}
}

@article{kharoof2026geometry,
  title={The geometry of simplicial distributions on suspension scenarios},
  author={Kharoof, Aziz},
  journal={Journal of Applied and Computational Topology},
  volume={10},
  number={1},
  pages={2},
  year={2026},
  publisher={Springer}
}

@book{rockafellar2015convex,
  title     = {Convex Analysis},
  author    = {Rockafellar, Ralph Tyrell},
  series    = {Princeton Mathematical Series},
  volume    = {28},
  publisher = {Princeton University Press},
  year      = {2015}
}

@article{popescu1994quantum,
  title={Quantum nonlocality as an axiom},
  author={Popescu, Sandu and Rohrlich, Daniel},
  journal={Foundations of Physics},
  volume={24},
  number={3},
  pages={379--385},
  year={1994},
  publisher={Springer}
}

@article{wainwright2008graphical,
  title={Graphical models, exponential families, and variational inference},
  author={Wainwright, Martin J and Jordan, Michael I and others},
  journal={Foundations and Trends{\textregistered} in Machine Learning},
  volume={1},
  number={1--2},
  pages={1--305},
  year={2008},
  publisher={Now Publishers, Inc.}
}
\bibliographystyle{ieeetr}

\normalfont

\end{document}